\documentclass{article}
\usepackage{graphicx} 
\usepackage{url}
\usepackage{amsmath,amsthm,amsfonts,amssymb}
\usepackage[colorlinks=true,citecolor=black,linkcolor=black,urlcolor=blue]{hyperref}
\usepackage{cleveref}
\usepackage{float}
\usepackage{enumitem}
\usepackage{todonotes}
\usepackage{diagbox}
\usepackage{subcaption}
\usepackage{authblk}

\newtheorem{theorem}{Theorem}
\newtheorem{corollary}[theorem]{Corollary}
\newtheorem{lemma}[theorem]{Lemma}
\newtheorem{proposition}[theorem]{Proposition}
\newtheorem{conjecture}[theorem]{Conjecture}
\newtheorem{remark}[theorem]{Remark}
\newtheorem{question}[theorem]{Question}

\usepackage{xcolor}

\newcommand{\dist}[0]{\operatorname{dist}}
\newcommand{\oct}[0]{\operatorname{oct}}
\newcommand{\colo}[2]{(1^{#1}, 2^{#2})}
\newcommand{\ex}[3]{\operatorname{ex}^{#1}_{#2}\hspace{-3pt}{#3}}

\title{Between proper and square coloring of planar graphs, hardness and extremal graphs}
\author[]{Thomas Del\'epine}
\affil[]{LIRMM, Univ Montpellier, CNRS, Montpellier, France}
\date{}
\begin{document}

\maketitle
\begin{abstract}
{\sc$\colo{a}{b}$-coloring} is the problem of partitioning the vertex set of a graph into $a$ independent sets and $b$ 2-independent sets. This problem was recently introduced by Choi and Liu. We study the computational complexity and extremal properties of {\sc$\colo{a}{b}$-coloring}. We prove that this problem is NP-Complete even when restricted to certain classes of planar graphs, and we also investigate the extremal values of $b$ when $a$ is fixed and in some $(a + 1)$-colorable classes of graphs. In particular, we prove that $k$-degenerate graphs are $\colo{k}{O(\sqrt{n})}$-colorable, that triangle-free planar graphs are $\colo{2}{O(\sqrt{n})}$-colorable and that planar graphs are $\colo{3}{O(\sqrt{n})}$-colorable. All upper bounds obtained are tight up to a constant factor.
\end{abstract}

\section{Introduction}

The \emph{chromatic number} of a graph $G$, denoted $\chi(G)$ is the least integer $k$ such that the vertex set of $G$ can be partitioned into $k$ independent sets. The problem of computing the chromatic number of a graph is one of the oldest and most widely studied problems in graph theory, from both an algorithmic and from a structural point of view. Those two points of views lead to very famous and celebrated results, such as the NP-Completeness of {$k$-coloring}~\cite{Karp1972} even when $k = 3$ and in planar graphs~\cite{GAREY1976237}, or the Four Color Theorem~\cite{appel1976every, appel1977every, ROBERTSON19972} stating that the chromatic number of every planar graph is at most $4$. However, the only known proofs of the latter result are computer-assisted due to the number of cases to study, and no combinatorial proof of the Four Color Theorem verifiable without computer assistance is known. In this direction, there has been recent interest regarding variants of the Four Color Theorem, and more precisely, strengthenings of the Five Color Theorem. For instance, it is proven in~\cite{inoue20255} that it is possible to $5$-color any planar graph of order $n$ such that at least one color class is of size at most $n/6$, which is better than the $n/5$ bound provided by the pigeon hole principle. Although this statement is much weaker than the Four Color Theorem, the proof is not computer-assisted. The Four Color Theorem also provides the best possible lower-bound for the size of the largest independent set in a planar graph $G$ with $\alpha(G) \geq n/4$ which is attained for a disjoint union of $K_4$. There is no known proof of this bound that is independent of the Four Color Theorem, and the best known lower-bound independent of the Four Color Theorem is $\alpha(G) \geq 3n/13$ provided in~\cite{cranston2016planargraphsindependenceratio}.  

A variant of the chromatic number is the \emph{$2$-distance chromatic number}, denoted $\chi_2(G)$, introduced by Kramer and Kramer in 1969 in \cite{kramer1969probleme}, and defined as the least integer $k$ such that the vertex set of $G$ can be partitioned into $k$ $2$-independent sets. The decision problem asking, for a graph $G$, whether or not $\chi_2(G) \leq k$ is called {\sc $2$-distance $k$-coloring} and it was proven NP-complete for every $k \geq 4$ in~\cite{sharp2007distance}. In fact, a much more general result is provided in~\cite{sharp2007distance} regarding {\sc$d$-distance $k$-coloring}, which is NP-Complete if $k > \lfloor 3d/2 \rfloor$ and Polynomial if $k \leq \lfloor 3d/2 \rfloor$. A conjecture of Wegner from 1977 in~\cite{wegner1977graphs} about the value of $\chi_2$ in planar graphs has generated substantial interest:

\begin{conjecture}[Wegner~\cite{wegner1977graphs}, 1977]
For every planar graph $G$,
\begin{align*}
    &\bullet~\chi_2(G) \leq 7 & \textrm{if } \Delta(G) = 3, \\
    &\bullet~\chi_2(G) \leq \Delta(G) + 5 & \textrm{if } 4 \leq \Delta(G) \leq 7, \\
    &\bullet~\chi_2(G) \leq \lfloor 3\Delta(G)/2\rfloor + 1 & \textrm{otherwise.}
\end{align*}
\end{conjecture}

Notice that this conjecture, if true, is tight.
Since then, it has been proven in~\cite{amini2013unified} and in~\cite{havet2017listcolouringsquaresplanar} that for every graph $G$ of maximum degree $\Delta$, $\chi_2(G) \leq (1 + o(1))3\Delta/2$ as $\Delta$ tends to infinity. Other results focus on bounding $\chi_2$ not in planar graphs of bounded maximum degree but in planar graphs of bounded maximum average degree~\cite{bonamy20142, la20252}. A simple lower bound on $\chi_2$ is, for every graph $G$, $\chi_2(G) \geq \Delta(G) + 1$. There has been a lot of interest regarding when this lower bound is tight since a conjecture of Wang and Lih from 2003 in~\cite{Wang_and_Lih}:

\begin{conjecture}[Wang and Lih~\cite{Wang_and_Lih}, 2003]
    For every integer $k \geq 5$, there exists an integer $M(k)$ such that every planar graph with girth at least $k$ and maximum degree at least $M(k)$ satisfies $\chi_2(G) = \Delta(G) + 1$.
\end{conjecture}
This conjecture has been validated for $k \geq 7$ in~\cite{Borodin2004, Borodin2004_2, DVORAK2008838, DVORAK20092634} and proven wrong for the cases $k = 5$ and $k = 6$, although planar graphs of girth at least $5$ are $2$-distance colorable with at most $\Delta + 2$ colors~\cite{BONAMY2019218}. We refer the reader to \cite{cranston2022coloring} for a recent survey regarding colorings at distance-2.

Due to the interest and importance of both $\chi$ and $\chi_2$, it is natural to study parameters that are between proper coloring and $2$-distance coloring. Recently, Choi and Liu introduced in~\cite{choi2025propersquarecoloringssparse} the notion of $\colo{a}{b}$-coloring that aims at partitioning a graph into at most $a$ independent sets and at most $b$ $2$-independent sets. In other words, $\colo{a}{b}$-coloring aims at coloring a graph with $a$ \emph{distance-1 colors} (two vertices at distance 1 from each other cannot have the same distance-1 color) and $b$ \emph{distance-2 colors} (two vertices at distance at most 2 from each other cannot have the same distance-2 color). When $b = 0$, this notion is exactly $a$-coloring, and when $a$ equals $0$, this notion is exactly $2$-distance $b$-coloring. In their work, Choi and Liu focus on the $\colo{a}{b}$-coloring (and the \emph{list coloring} versions of $\colo{a}{b}$-coloring) of sparse graphs, and, for a fixed value of $a$, they provide bounds on the value of $b$ for graphs of bounded maximum average degree or of bounded genus to be $\colo{a}{b}$-colorable. Among other results, they prove:  

\begin{theorem}[Choi and Liu~\cite{choi2025propersquarecoloringssparse}, 2025]\label{thm:choi_and_liu}
    For every planar graph $G$,
    \begin{itemize}
        \item if $G$ is of girth at least $7$, then $G$ is $\colo{2}{12}$-colorable, and
        \item if $G$ is of girth at least $8$, then $G$ is $\colo{2}{2}$-colorable, and
        \item if $G$ is of girth at least $10$, then $G$ is $\colo{2}{1}$-colorable.
    \end{itemize}
\end{theorem}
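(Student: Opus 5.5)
The plan is a discharging argument on a minimal counterexample, run separately for each of the three girth thresholds. Planar graphs of girth $g$ have maximum average degree less than $2g/(g-2)$. This gives $\mathrm{mad}(G) < 14/5$ for girth $7$, $\mathrm{mad}(G) < 8/3$ for girth $8$, and $\mathrm{mad}(G) < 5/2$ for girth $10$. It therefore suffices to prove that every graph with maximum average degree below these thresholds is $\colo{2}{b}$-colorable for the corresponding $b$. Girth is used only through this sparsity bound, together with the fact that short cycles are absent when we check that a reducible configuration really is reducible (for example, that two nearby $2$-vertices have disjoint neighbourhoods).

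Fix a minimal counterexample $G$, minimising $|V(G)|+|E(G)|$. The first step is to derive reducible configurations.
\begin{itemize}
\item $G$ has no vertex of degree $1$: delete it, colour the rest by minimality, and then give it the distance-1 colour different from its neighbour's.
\item Long threads of $2$-vertices are reducible. Consider a path of $2$-vertices whose endpoints are coloured with distance-1 colours. Its internal vertices can always be $2$-coloured properly, except when parity forces a conflict. In that case a single vertex on the thread receives a distance-2 colour, and this vertex must avoid the distance-2 colours used within distance $2$ of it.
\end{itemize}
For $\colo{2}{1}$ I expect exactly this: in a minimal counterexample, every $3^+$-vertex has few incident $2$-threads, and each thread is short (length at most $1$ or $2$). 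Discharging then proceeds as follows. Each vertex starts with charge $d(v) - 5/2$. Each $3^+$-vertex sends $1/4$ (or $1/2$ along short threads) to adjacent $2$-vertices. Afterwards all charges are nonnegative, contradicting $\mathrm{mad}<5/2$.

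The case $\colo{2}{12}$ is more generous in colours, so high-degree vertices become the issue instead. The distance-2 palette has only $12$ colours, so it cannot handle a vertex of large degree directly. The key observation is that a vertex with distance-1 colour $1$ only forbids colour $1$ on its neighbours. So for a $2$-vertex or $3$-vertex $v$, we delete $v$ and colour $G-v$. If $v$'s neighbours use both distance-1 colours, we recolour $v$ with a distance-2 colour. This is possible provided that fewer than $12$ distance-2 colours appear within distance $2$ of $v$. To guarantee this, we should additionally maintain the invariant that distance-2 colours are used only on low-degree vertices, say of degree at most $3$; large-degree vertices always receive distance-1 colours.

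The main obstacle is making the reducibility arguments compatible with this structural invariant while still closing the discharging with the tight thresholds $14/5$ and $8/3$. For $\colo{2}{2}$ in particular, a $2$-vertex must be able to fall back on one of two distance-2 colours. This requires the set of nearby distance-2 coloured vertices to be very sparse, and that is what the girth-$8$ assumption forbids.

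I would first try the standard strengthening: prove the statement for all graphs with the given $\mathrm{mad}$ bound and the extra property that every distance-2 coloured vertex has degree $2$. I would then identify the configurations (a $3$-vertex with three $2$-neighbours, $2$-threads of length $2$, and so on) that discharge with rules such as ``every $3^+$-vertex gives $(d-8/3)/k$ to each of its $k$ pendant threads''. I expect checking that each such configuration is reducible to be the delicate part, especially for $\colo{2}{2}$.
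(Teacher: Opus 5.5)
The paper does not prove this theorem; it quotes it from Choi and Liu. There is therefore no in-paper argument to compare with, and your proposal has to stand on its own. It does not. The reducible configurations and the discharging, which are the whole content of such a proof, are left as expectations. Worse, the one concrete device you propose for $\colo{2}{12}$ and $\colo{2}{2}$ is false.

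No colouring can satisfy ``distance-2 colours are used only on vertices of degree at most $3$'', let alone ``only on $2$-vertices''. Take an odd cycle of length at least $11$ and attach two pendant vertices to each cycle vertex. This graph is planar, has girth at least $11$ and maximum average degree $2$. In any $\colo{2}{b}$-colouring the vertices with distance-1 colours induce a properly $2$-coloured, hence bipartite, graph. So some cycle vertex, of degree $4$, must get a distance-2 colour. Attaching more pendant vertices (or pendant $10$-cycles, if you want minimum degree $2$) forces arbitrarily large degree. So your strengthened induction hypothesis is a false statement.

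Even granting it, it would not give ``fewer than $12$ distance-2 colours within distance $2$ of $v$''. A neighbour $u$ of $v$ of large degree can carry $12$ distance-2-coloured neighbours, all at distance $2$ from $v$. Restricting the degrees of those coloured vertices does nothing to prevent this.

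For $\colo{2}{1}$ your discharging does not close either, and the gap sits exactly where the difficulty is. A $3$-vertex starts with charge $1/2$, so sending $1/4$ to each $2$-neighbour fails when all three neighbours are $2$-vertices. Sparsity cannot exclude this. The $1$-subdivisions of planar cubic graphs of girth $5$ are planar, have girth $10$ and average degree $12/5<5/2$, and consist only of such $3$-vertices and $2$-vertices. Hence some configuration occurring in them must be shown reducible. The natural candidate is a $3$-vertex $v$ with $2$-neighbours $a,b,c$ whose other neighbours are $x,y,z$.

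The direct extension argument fails for this configuration. Suppose that in the colouring of $G-\{v,a,b,c\}$:
\begin{itemize}
\item $x$ has the distance-2 colour;
\item $y$ and $z$ have different distance-1 colours;
\item each of $y,z$ has a neighbour with the distance-2 colour.
\end{itemize}
Then $v$ is at distance $2$ from $x$, so it must take a distance-1 colour. Whichever one it takes, one of $b,c$ sees both distance-1 colours on its neighbours and lies within distance $2$ of a vertex with the distance-2 colour.

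Overcoming this, by a non-local reduction, by recolouring, or by a different set of configurations and rules, is what a proof of this theorem has to do. Your proposal does not do it, and for $\colo{2}{2}$ it does not identify a single configuration.
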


They also construct a planar graph of girth $7$ that is not $\colo{2}{1}$-colorable and planar graphs of girth $6$ that are not $\colo{2}{k}$-colorable, for every $k$.

Building on their work, we continue the study of $\colo{a}{b}$-coloring. In Section~\ref{sec:NPC}, we provide proofs that {\sc $\colo{a}{b}$-coloring}, the decision problem asking whether a given graph is $\colo{a}{b}$-colorable, is NP-Complete even when restricted to various subclasses of planar graphs. More precisely, our hardness results are of three kinds. We provide classical NP-hardness proofs stating that for some values of $a$ and $b$, {\sc $\colo{a}{b}$-coloring} is NP-Complete. We also provide conditional NP-Completeness statements stating that for some values of $a$ and $b$, {\sc $\colo{a}{b}$-coloring} is NP-Complete in some restricted classes of graphs if and only if there exists a graph with some structural constraints that is not $\colo{a}{b}$-colorable. Finally, whenever possible, we provide gap statements. Let $\mathcal{S}$ and $\mathcal{S}'$ be two sets such that $\mathcal{S} \subseteq \mathcal{S}'$ and such that the problem of deciding if some element $e$ belongs to $\mathcal{S}$ is NP-Complete. Then we say that there is a gap between $\mathcal{S}$ and $\mathcal{S}'$ if deciding if some element $e$ belongs to $\mathcal{S}$ is NP-Complete even with the additional information that either $e \in \mathcal{S}$ or $e \not\in \mathcal{S}'$. Notice that such a statement is stronger than just saying that deciding if some element $e$ belongs to $\mathcal{S}'$ is NP-Complete. More precisely, our results are the following:
\begin{itemize}
    \item for every $k \in \{2, 3\}$ and $g \geq 3$, {\sc $\colo{1}{k}$-coloring} is NP-Complete even when restricted to bipartite planar graphs with maximum degree $k + 1$ and girth at least $g$ (Theorem~\ref{thm:npc_1_2_coloring_for_every_g} and Theorem~\ref{thm:npc_1_3_coloring_for_every_g}), and
    \item for every $k \geq 4$, {\sc $\colo{1}{k}$-coloring} is NP-Complete even when restricted to bipartite planar graphs with maximum degree $k + 1$ (Theorem~\ref{thm:1_k_npc}
    ), and
    \item for every $k \geq 1$, and for every class $\mathcal{C}$ containing disjoint unions of $P_2$, $P_5$ and $P_8$, deciding if the vertex set of a planar graph can be partitioned into a set inducing a graph from $\mathcal{C}$ and $k$ 2-independent sets or is not $\colo{2}{k}$-colorable is NP-Complete (Corollary~\ref{coro:2k_gap_strutrural}), and
    \item for every $k \geq 1$ and $g \in \{3, 4, 5, 6\}$, there exists a finite set $\mathcal{S}$ of bipartite planar graphs of girth at least $g$ such that for every class of graphs $\mathcal{C}$ containing the disjoint unions of the graphs in $\mathcal{S}$, the problem of deciding whether the vertex set of a planar graph of girth at least $g$ can be partitioned into a set of vertices inducing graph from $\mathcal{C}$ and $k$ 2-independent sets or is not $\colo{2}{k}$-colorable is NP-Complete (Corollary~\ref{coro:2k_np_hard_gorth_3456}), and
    \item deciding if a planar graph is $3$-colorable or not $\colo{3}{1}$-colorable is NP-Complete (Theorem~\ref{thm:npc_gap_3_1}), and 
    \item for every $k \geq 1$, {\sc $\colo{3}{k}$-coloring} is NP-Complete even when restricted to planar graphs of maximum degree $3k + 4$ (Theorem~\ref{thm:npc_3_k}).
\end{itemize}

 Then, in Section~\ref{sec:extr_values}, we study the extremal values of $\colo{a}{b}$-colorings. For some class of graphs $\mathcal{C}$, we denote by $\ex{a}{\mathcal{C}}{(n)}$ the smallest integer $b$ such that every graph of order $n$ in $\mathcal{C}$ is $\colo{a}{b}$-colorable, and we try to find lower and upper bounds on $\ex{a}{\mathcal{C}}{(n)}$ for classes $\mathcal{C}$ of $(a + 1)$-colorable graphs for which $\ex{a}{\mathcal{C}}{(n)}$ is sub-linear. We managed to prove that:

 \begin{itemize}
     \item every $k$-degenerate graph is $\colo{k}{O(\sqrt{n})}$-colorable (Theorem~\ref{thm:coloring_k_degenerate_graphs}), and
     \item every planar graph of girth at least $4$ is $\colo{2}{O(\sqrt{n})}$-colorable (Theorem~\ref{thm:2_k-colo_planar_girth_4}), and
     \item every planar graph is $\colo{3}{O(\sqrt{n})}$-colorable (Theorem~\ref{thm:bound_sqrt_planar}). Additionally, the proof is independent of the Four Color Theorem.
 \end{itemize}
 Moreover, all the previous bounds are tight up to a constant factor. 

\section{NP-Completeness}\label{sec:NPC}

Let {\sc $\colo{a}{b}$-coloring} be the problem of partitioning the vertex set of a graph into $a$ independent sets and $b$ $2$-independent sets. In this section, we will prove hardness results for some variants of {\sc$\colo{a}{b}$-coloring}. The map associating a color to each vertex has polynomial size, and the verification procedure is a simple algorithm that checks that every vertex is well colored, thus it has polynomial time complexity. Therefore, for every integer $a$ and $b$, {\sc$\colo{a}{b}$-coloring} is in NP ($a$ and $b$ being part of the input or not). So, in what follows, in every NP-Completeness proof, it is sufficient to prove NP-hardness. 
Through our reductions, we use different NP-Complete problems defined as follows :
\begin{itemize}
    \item {\sc 3-coloring} : Given a graph $G$, is $G$ $3$-colorable ?
    \item {\sc $(\Delta_1, \Delta_1)$-coloring} : Given a graph $G$, can $G$ be $2$-colored such that each color class induces a graph of maximum degree $1$ ?
    \item {\sc restricted planar 3-sat} : Given a $3$-CNF SAT formula $\varphi$ such that
    \begin{itemize}
     \item each clause has size 2 or 3,
     \item each variable appears exactly twice positively and once negatively,
     \item the variable-clause incidence graph is planar.
    \end{itemize}
    Is $\varphi$ satisfiable ?i
\end{itemize}

\begin{theorem}\label{thm:NPC-problems} The following algorithmic problems are NP-Complete :
    \begin{itemize}
        \item  {\sc$3$-coloring}, even when restricted to planar graphs of maximum degree $4$ (see~\cite{GAREY1976237}).
        \item  {\sc $(\Delta_1, \Delta_1)$-coloring}, even when restricted to triangle-free planar graphs of maximum degree $4$ (see~\cite{montassier2013near}).
        \item  {\sc restricted planar 3-sat} (see~\cite{DJPSY}).
    \end{itemize}
\end{theorem}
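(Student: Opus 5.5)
This theorem collects three known hardness results, and the paper cites a source for each. So the plan is mainly to check that each cited reduction gives exactly the restricted form stated here, rather than to re-prove it. All three problems lie in NP, because a coloring or a truth assignment is a certificate that can be checked in polynomial time. It therefore remains to argue NP-hardness in each restricted class.

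For {\sc $3$-coloring}, I would invoke the reduction of Garey, Johnson and Stockmeyer~\cite{GAREY1976237}. It starts from {\sc $3$-coloring} of general graphs: draw the graph in the plane and replace each edge crossing by the standard planar crossover gadget. This gadget forces the two opposite endpoints of each crossing to receive equal colors while staying $3$-colorable, so a planar graph is obtained. Vertices of high degree are then replaced by degree-reducing gadgets, which are chains of small planar pieces forcing equal colors along the chain. This brings the maximum degree down to $4$ while preserving planarity and $3$-colorability.

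For {\sc $(\Delta_1,\Delta_1)$-coloring}, I would rely on Montassier and Ochem~\cite{montassier2013near}. Their reduction starts from a planar NP-complete problem, typically a planar satisfiability variant or planar {\sc $3$-coloring}. They build variable and clause gadgets that are triangle-free, planar and of maximum degree $4$. In any $2$-coloring where each color class induces maximum degree at most $1$, these gadgets force the color pattern to propagate, and a valid coloring exists exactly when the source instance is a yes-instance. The work here is to check the three constraints (girth at least $4$, planarity, degree bound) on each gadget and at the junctions between gadgets.

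For {\sc restricted planar 3-sat}, I would cite Dahlhaus, Johnson, Papadimitriou, Seymour and Yannakakis~\cite{DJPSY}. Starting from planar 3-SAT, one replaces each variable $x$ that has $m$ occurrences by a cycle of fresh variables $x_1,\dots,x_m$ joined by the implication clauses $(\neg x_i \vee x_{i+1})$. This can be done without breaking planarity. Each $x_i$ then appears once negatively in an implication clause, once positively in another implication clause, and once in an original clause. Negating variables where necessary makes every variable appear exactly twice positively and once negatively, with clauses of size $2$ or $3$.

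The main obstacle is purely bookkeeping: confirming that the degree and girth restrictions of the cited results match the ones stated here, especially triangle-freeness combined with maximum degree $4$ in the second item. Since these are published results, I would state the theorem with citations and not reproduce the gadgets.
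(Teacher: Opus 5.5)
Your proposal is correct and matches the paper, which gives no proof of this statement beyond citing \cite{GAREY1976237}, \cite{montassier2013near} and \cite{DJPSY}; membership in NP is observed separately at the start of Section~\ref{sec:NPC}. Your sketches go beyond what the paper provides. The one for {\sc restricted planar 3-sat} is sound: it builds a cycle of implication clauses $(\neg x_i \vee x_{i+1})$ per variable and then complements each $x_i$ whose original occurrence is negative, provided variables with a single occurrence are first removed as pure literals.
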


The set of graphs $\colo{2}{0}$-colorable is the set of bipartite graphs, the set of graphs $\colo{1}{1}$-colorable is the set of disjoint unions of starts, the set of $\colo{0}{2}$-colorable graphs is the set of disjoint union of $K_1$ and $K_2$ and the set of $\colo{0}{3}$-colorable graphs is the set of disjoint union of paths and cycles. Therefore, in all these cases {\sc$\colo{a}{b}$-coloring} is in P. On the other hand, for $k \geq 4$, {\sc$\colo{0}{k}$-coloring} is known to be NP-hard (see~\cite{FEDER2021103210}). In the following subsections, we will prove that {\sc$\colo{a}{b}$-coloring} is NP-Complete for the remaining values of $a$ and $b$ with $a \leq 3$ in some restricted subclasses of planar graphs.

\subsection{{\sc $\colo{1}{2}$-coloring} in planar graphs of large girth}

\begin{figure}
    \centering
    \includegraphics[width=0.8\linewidth]{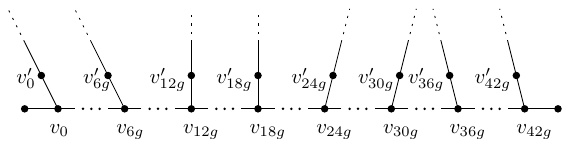}
    \caption{The vertex gadget used in the proof of Theorem~\ref{thm:npc_1_2_coloring_for_every_g}.}
    \label{fig:vertex_gadget_for_1_2_degree3_NPC}
\end{figure}

In what follows, we say that we connect two vertices $u$ and $v$ with a path of length $k$ whenever we add $k - 1$ new vertices $s_1, \dots, s_{k-1}$ and the edges $us_1$, $s_{k - 1}v$ and $s_is_{i + 1}$ for every $i < k - 1$.
\begin{theorem}\label{thm:npc_1_2_coloring_for_every_g}
    For every $g \geq 3$, {\sc $\colo{1}{2}$-coloring} is NP-Complete even when restricted to bipartite planar graphs with maximum degree $3$ and girth at least $g$.
\end{theorem}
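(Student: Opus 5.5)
Membership in NP is already settled, so the proof reduces to NP-hardness. I would reduce from {\sc restricted planar 3-sat} (Theorem~\ref{thm:NPC-problems}), since its incidence graph is planar and every vertex of it has degree at most $3$. That matches the target maximum degree $3$.

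First I would record the local structure of a $\colo{1}{2}$-coloring: one independent color $1$ and two distance-2 colors $a,b$. A vertex colored $a$ has at most one neighbour colored $b$, and all its other neighbours are colored $1$. The subgraph induced by the $a$/$b$ vertices is a matching plus isolated vertices. Each vertex colored $1$ has at most one neighbour colored $a$ and at most one colored $b$. So a vertex of degree $3$ colored $1$ needs at least one further neighbour colored $1$, which is impossible because color $1$ is independent. Hence \emph{every vertex of degree $3$ is colored $a$ or $b$}, and its neighbours are colored $1$ except possibly one.

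This rigidity is what I would exploit in the gadgets. A long path of degree-$2$ vertices whose ends are degree-$3$ vertices propagates constraints. I would compute exactly which pairs of end states are compatible for paths of length $\ell$ modulo some period, say $3$, since the patterns along a path behave like a finite automaton on the alphabet $\{1,a,b\}$. Choosing lengths in the right residue class lets me make paths arbitrarily long, which gives girth at least $g$, and make all cycles even, which gives bipartiteness.

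Next come the gadgets (the vertex gadget of Figure~\ref{fig:vertex_gadget_for_1_2_degree3_NPC}). For each variable I would build a cycle of even length, long with respect to $g$, carrying degree-$3$ attachment vertices. The rigidity forces the coloring around the cycle into essentially two states, $a/b$ alternation phases, encoding true and false. Three output ports leave the cycle, two positive and one negative, where a negative port is obtained by shifting the attachment point by an odd offset. For each clause (size $2$ or $3$) I would use a degree-$\le 3$ vertex joined by long paths to its literal ports. The path lengths would be chosen so that the clause vertex can be colored iff at least one incoming path arrives in the ``true'' state. The argument is that if all incoming paths are ``false'', the clause vertex or its neighbourhood is forced into a configuration violating the distance-2 constraint or the independence of color $1$. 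Planarity follows from the planarity of the incidence graph, since each gadget is planar and ports can be placed in the required cyclic order.

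The correctness proof then has two directions. From a satisfying assignment, color each variable cycle according to its value and extend along the paths and clause gadgets using the automaton table. Conversely, read the assignment off the phase of each variable cycle and use the clause-gadget lemma. The main obstacle is designing and verifying the clause gadget and the transmission paths, that is, showing precisely which end-state pairs a path of given length permits, while keeping degree $\le 3$ and even cycle lengths. I would first enumerate the transfer relation of long paths by hand (the state of the last two vertices suffices). If pure paths turn out too permissive, I would insert small degree-$3$ ``forcing'' subgadgets, such as pendant trees whose internal vertices are forced to $a/b$, to pin colors along the wires.
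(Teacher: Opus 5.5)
There is a genuine gap. Your observation that every degree-$3$ vertex must receive $a$ or $b$ is correct, but none of the gadgets is actually built, and both properties you rely on fail as stated.

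First, wires made of degree-$2$ vertices transmit nothing. On such a path the vertices colored $1$ are isolated, and no three consecutive vertices avoid $1$. Consecutive non-$1$ vertices are at distance $1$ or $2$, so they must alternate between $a$ and $b$. The blocks ``$x1$'' and ``$xy1$'' can be concatenated freely, so on any sufficiently long segment the parity of the number of $a/b$ vertices is not determined by the length. Essentially every pair of end states is therefore compatible, and your transfer-relation table is trivial. The same defect breaks the variable cycle: with only a few degree-$3$ attachment vertices, its ports are independent and there is no ``rigidity''. You need forcing vertices placed densely along the whole gadget. The paper does this by adding a pendant edge at every $v_{3i}$, which forces all the $v_{3i}$ to carry the same distance-$2$ color.

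Second, and more fundamentally, even with forcing your truth values are labels. The swap $a\leftrightarrow b$ maps $\colo{1}{2}$-colorings to $\colo{1}{2}$-colorings and exchanges the two phases of every variable gadget simultaneously. Your negative ports, defined by an odd shift, are still label-based. Hence a clause gadget that is colorable when all its inputs arrive ``true'' is also colorable when all of them arrive ``false''. A gadget that is colorable if and only if at least one input is true therefore cannot exist, and the 3-SAT reduction fails as designed.

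With this encoding you can only test swap-invariant relations, such as whether two gadgets carry equal or different labels. You would need one of the following:
\begin{enumerate}
\item a swap-invariant encoding of truth values;
\item a reference wire, routed planarly, that reaches every clause;
\item a symmetric source problem. Planar NAE-3SAT will not do, since it is polynomial.
\end{enumerate}
The paper takes the third route. It reduces from $(\Delta_1,\Delta_1)$-coloring of triangle-free planar graphs of maximum degree $4$. The color of $v$ is encoded by the common distance-$2$ color of the forced vertices $v_{3i}$. Pairs of paths of length $9$ between gadgets react only to whether two gadgets carry the same distance-$2$ color, and each gadget can absorb at most one such coincidence. That is exactly the constraint ``at most one neighbour of the same color'', and it is information that survives the global swap.
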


\begin{proof}
    We reduce {\sc $(\Delta_1, \Delta_1)$-coloring}, which is known to be NP-Complete on triangle-free planar graphs with maximum degree 4 by Theorem~\ref{thm:NPC-problems}. 
    Let $G$ be a triangle-free planar graph of maximum degree $4$. We construct from $G$ the graph $G'$ by adding one path graph $v_0, \dots, v_{42g}$ per vertex $v \in V(G)$. Then, for every $uv \in E(G)$, we chose $i$, $j \in \{0, 1, 2, 3\}$ so that we can connect $u_{12ig}$ and $u_{(12i + 6)g}$ to $v_{12jg}$ and $v_{(12j + 6)g}$ respectively with two paths of length $9$ in such a way that at the end of the procedure, $G'$ is still planar. Finally, for every vertex $v \in V(G)$ and for every integer $i \in \{0, \dots, 14g\}$, if $v_{3i}$ is of degree $2$ then we add a pendant edge to it. So for every vertex $v \in V(G)$, $v$ is represented in $G'$ by the graph from Figure~\ref{fig:vertex_gadget_for_1_2_degree3_NPC}. Observe that $G'$ is planar, bipartite, has maximum degree $3$, and girth at least $12g + 18$.
    \begin{itemize}
        \item If $G$ is $(\Delta_1, \Delta_1)$-colorable, then let us fix a $(\Delta_1, \Delta_1)$-coloring of $G$ and for every vertex $v \in V(G)$, we copy the color of $v$ in the $(\Delta_1, \Delta_1)$-coloring of $G$ to color $v_0, v_3, \dots, v_{42g}$ in $G'$, considering the $2$ used colors as distance-2 colors. We claim that this partial $\colo{0}{2}$-coloring of $G'$ can be extended into a $\colo{1}{2}$-coloring of $G'$. Indeed, for every vertex $v \in V(G)$ colored $c$, if no neighbor of $v$ in $G$ is also colored $c$, then we can color every vertex of degree $1$ in $G'$ with the distance-1 color, $v_1, v_4, \dots, v_{42g - 2}$ with the distance-1 color, $v_2, v_5, \dots, v_{42g - 1}$ with the other distance-2 color and $v'_0, v'_{6g}, \dots, v'_{42g}$ with the distance-1 color. Otherwise, let $u$ be the only neighbor of $v$ in $G$ also colored $c$, and assume without loss of generality that $u_{12ig}$ is connected to $v_{12jg}$ and that $u_{12i'g}$ is connected to $v_{12j'g}$ for some $i$, $j$, $i'$, $j'$. In this case, we can color $u_{12ig}'$ and $v_{12j'g}'$ with the distance-1 color and $v_{12jg}'$ $u_{12i'g}'$ with the other distance-2 color. The remaining vertices can be easily colored so that we obtain a $\colo{1}{2}$-coloring of $G'$.
        \item If $G'$ is $\colo{1}{2}$-colorable, then let us fix a $\colo{1}{2}$-coloring. Notice that for every vertex $v \in V(G)$ and for every $i$, $j \in \{0, 3, \dots, 42g\}$ $v_i$ and $v_j$ must have the same distance-2 color. Moreover, at most one of $v'_0, v'_{6g}, \dots, v'_{42g}$ can have the other distance-2 color since once $v'_{6gi}$ (for some $i$) is colored with a distance-2 color, it forces the color of $v'_{6gj}$ for every $j \neq i$. 
        Let $uv \in E(G)$ and let $i$, $j$, $i'$, $j'$ be such that $u_{12ig}$ and $u_{12i'g}$ are connected to $v_{12jg}$ and $v_{12j'g}$ receptively. Then if $u_{0}$ and $v_{0}$ have the same distance-2 color then one of $u'_{12ig}$ and $u'_{12i'g}$ must have the other distance-2 color, and similarly for $v'_{12jg}$ and $v'_{12j'g}$ since a path  of length $9$ is not $\colo{1}{2}$-colorable if its two endpoints are precolored with the same distance-2 color and the neighbor of each endpoint is precolored with the distance-1 color. 
        
        Therefore, for every $u$, $w \in N_G(v)$ with $u \neq w$, if $u_0$ and $v_0$ have the same distance-2 color then $w_0$ and $v_0$ must have different distance-2 colors and therefore $u_0$ and $w_0$ have distinct distance-2 colors. So, we can $(\Delta_1, \Delta_1)$-color $G$ by copying, for every vertex $v \in V(G)$, the color of $v_0$.
    \end{itemize}
    Therefore, $G$ is $(\Delta_1, \Delta_1)$-colorable if and only if $G'$ is $\colo{1}{2}$-colorable. So {\sc $\colo{1}{2}$-coloring} is NP-Complete, even when reduced to bipartite planar graphs of maximum degree $3$ and girth at least $g$ for every $g \geq 3$.
\end{proof}

\subsection{{\sc $\colo{1}{3}$-coloring} in planar graphs of large girth}

\begin{theorem}\label{thm:npc_1_3_coloring_for_every_g}
    For every integer $g\geq 3$, {\sc $\colo{1}{3}$-coloring} is NP-Complete on bipartite planar graphs with maximum degree 4 and girth at least $g$.
\end{theorem}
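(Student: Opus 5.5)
We reduce from a problem listed in Theorem~\ref{thm:NPC-problems}. Our first choice is {\sc restricted planar 3-sat}, since its variables occur exactly three times, which fits the degree bound $4$ well. The alternative is to mimic the proof of Theorem~\ref{thm:npc_1_2_coloring_for_every_g} and reduce from {\sc 3-coloring} on planar graphs of maximum degree $4$. There the three distance-2 colors would play the role of the three colors of $G$, so the reduction from {\sc 3-coloring} is the more natural template and we commit to it.

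Given a planar graph $G$ of maximum degree $4$, we build $G'$ by replacing each vertex $v$ with a long path $v_0,\dots,v_{N}$, with $N$ a multiple of $g$ large enough to make the girth at least $g$. We attach pendant vertices so that the internal vertices of degree $2$ become degree $3$ or $4$. The aim is that, in any $\colo{1}{3}$-coloring, the vertices $v_0, v_c, v_{2c},\dots$ are forced to share one distance-2 color, which we call the color of $v$. The first step is to design this \emph{rigid path} gadget. A natural candidate is a path in which every vertex carries pendant leaves, making it a caterpillar of maximum degree $4$. In such a caterpillar, a vertex with three neighbors cannot have two of those neighbors colored with the distance-1 color unless... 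Rather than patch this, we will simply analyse short caterpillars by hand. We look for a periodic pattern (distance-1, $c$, $c'$, distance-1, \dots) that is the only extension once a few vertices are fixed, and then use its period $c$.

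For each edge $uv$ of $G$, we pick a distinct attachment point on each vertex path and join the two points by a path of fixed length $\ell$. The points are chosen in cyclic order so that planarity is preserved, and spaced out so that every cycle has length at least $g$. The length $\ell$ is tuned so that the connecting path admits a $\colo{1}{3}$-coloring if and only if its two endpoints receive different distance-2 colors, given the forced coloring around the attachment points. Since the paths alternate, $G'$ stays bipartite provided all lengths are even where needed. A parity check on $N$, $c$ and $\ell$ settles bipartiteness.

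The correctness argument then has two directions. Given a proper $3$-coloring of $G$, color each vertex gadget with the periodic pattern indexed by the color of $v$, and extend the colorings along the edge paths. Conversely, in a $\colo{1}{3}$-coloring of $G'$ each vertex gadget determines a color, and the edge gadgets force adjacent vertices to receive distinct colors, which yields a proper $3$-coloring of $G$. The main obstacle is the gadget analysis. We must show that some fixed $\ell$ yields an edge path that is colorable exactly when its endpoint colors differ, while the vertex gadget is rigid in its distance-2 color and keeps degrees at most $4$. Without such rigidity, distance-1 colors could be inserted freely along the paths, and both gadgets would become too flexible. We expect to find these lengths by a finite case analysis on the colorings of short paths, in the style of the "path of length $9$" fact used in the previous proof.
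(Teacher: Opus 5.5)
\textbf{There is a genuine gap: the proposal defers the two facts that make up the proof.}

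Your overall plan matches the paper's. You reduce from {\sc 3-coloring} on planar graphs of maximum degree $4$, use one long path per vertex whose distance-2 colors encode the color of that vertex, join attachment points by short connectors, and rely on spacing and parity for girth and bipartiteness. But you push the rigidity of the vertex gadget and the exact behaviour of the edge connector into a ``finite case analysis'' that you never carry out. As written, nothing is established.

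Some of your tentative choices would also fail. Padding spine vertices only to degree $3$ gives no rigidity: a degree-$3$ vertex can take the distance-1 color while its three neighbors take the three distance-2 colors. Alternating such vertices along the spine makes the colors of far-apart positions independent. Also, the pattern (distance-1, $c$, $c'$, distance-1, \dots) you are looking for cannot occur on the spine of the right gadget.

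\textbf{The missing idea is a one-line count, and it applies to the degree-$4$ caterpillar you mention and then abandon.}

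Pad every spine vertex, ends included, to degree exactly $4$. Suppose a degree-$4$ vertex took the unique distance-1 color. Then its four neighbors, which are pairwise at distance at most $2$, would need four distinct distance-2 colors. Only three are available, so every spine vertex is distance-2 colored.

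Any three consecutive spine vertices are pairwise within distance $2$, so they use all three colors. The spine coloring is therefore forced to be $3$-periodic. All attachment points at positions divisible by $3$ share one color, the color of $v$. The paper spaces them at multiples of $6g$, which also gives the girth bound and even cycles.

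No tuning of $\ell$ is needed: take $\ell=2$. The endpoints of a length-$2$ connector are at distance $2$, so they get distinct distance-2 colors. Conversely, the middle vertex can always take the distance-1 color, since all its neighbors are distance-2 colored.

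For internal attachment points, $\ell=2$ is essentially forced. A non-spine neighbor of an internal spine vertex sees all three distance-2 colors within distance $2$, so it must take the distance-1 color. Hence $\ell=3$ is never colorable and $\ell=4$ imposes no constraint between the endpoints.

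The forward direction is then immediate: color $v_i$ with $(c(v)+i) \bmod 3$ and every other vertex with the distance-1 color.
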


\begin{proof}
    We reduce {\sc 3-coloring}, which is known to be NP-Complete even in planar graphs of maximum degree $4$ by Theorem~\ref{thm:NPC-problems}. Let $G$ be a planar graph of maximum degree $4$. 
    We construct from $G$ the graph $G'$ by adding one path graph $v_0\dots v_{18g}$ per vertex $v \in V(G)$. Then, for every edge $uv \in E(G)$, we chose $i$, $j \in \{0, 1, 2, 3\}$ so that we can connect $u_{6i}$ to $v_{6j}$ by a path of length $2$ in such a way that at the end of the procedure, $G'$ is still planar. 
    Finally, for every $v \in V(G)$ and for every $i \in \{0, \dots, 18g\}$, if $v_i$ is of degree less than $4$ in $G'$ then we add pendant edges so that $v_i$ has degree $4$ in $G'$. Observe that $G'$ is planar, bipartite, has maximum degree $4$ and girth at least $18g + 6$.
    \begin{itemize}
        \item If $G$ is $3$-colorable, then assume the three colors are $1$, $2$ and $3$ and let us fix a $3$-coloring of $G$. Now for every $v \in V(G)$ of color $c$ and for every $i \in \{0, \dots, 18g\}$, we color $v_i$ with the distance-2 color $(c + i)\mod 3$. Finally, we color the remaining non-colored vertices of $G'$ with the distance-1 color, and this coloring is a valid $\colo{1}{3}$-coloring of $G'$.
        \item If $G'$ is $\colo{1}{3}$-colorable, then for every $v \in V(G)$ and for every $i \in \{0, \dots, 18g\}$, $v_i$ is of degree $4$ so it must be colored with a distance-2 color. Moreover, for every $i$, $j \in \{0, 6g, 12g, 18g\}$, $v_i$ and $v_j$ must have the same distance-2 color.
        Therefore, for every edge $uv \in E(G)$, assuming that $u_{6i}$ is connected to $v_{6j}$ by a path of length $2$, $u_{6i}$ and $v_{6j}$ cannot be colored by the same distance-2 color.
        Thus, by assigning to every vertex $v \in V(G)$ the color of $v_0$, we obtain a proper $3$-coloring of $G$.
    \end{itemize}
    Therefore, $G$ is $3$-colorable if and only if $G'$ is $\colo{1}{3}$-colorable, so {\sc $\colo{1}{3}$-coloring} is NP-Complete even when restricted to bipartite planar graphs with maximum degree 4 and girth at least $g$ for every $g\geq 3$.
\end{proof}

\subsection{{\sc $\colo{1}{k}$-coloring} in planar graphs}

\begin{figure}
    \centering
    \includegraphics[width=0.8\linewidth]{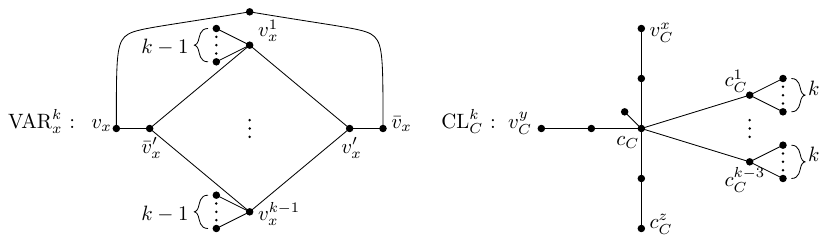}
    \caption{Two graphs used in the proof of Theorem~\ref{thm:1_k_npc}, $\textrm{VAR}_v^k$ is $\colo{1}{k}$-colorable and $v$ is colored with a distance-1 color if and only if $\bar{v}$ is colored with a distance-1 color, and $\textrm{CL}_C^k$ is $\colo{1}{k}$-colorable and at least one of $v_C^x$, $v_C^y$ or $v_C^z$ must be colored with a distance-2 color.}
    \label{fig:gadgets_for_1_k}
\end{figure}

\begin{lemma}
Let $k \geq 4$ and consider the graphs $\textrm{VAR}_x^k$ and $\textrm{CL}_C^k$ from Figure~\ref{fig:gadgets_for_1_k}. Then these two graphs are $\colo{1}{k}$-colorable, and
\begin{enumerate}
    \item in $\textrm{VAR}_x^k$, $v_x$ is colored with a distance-1 color if and only if $\bar{v}_x$ is colored with a distance-2 color, and 
    \item in $\textrm{CL}_C^k$, at least one of $v_C^x$, $v_C^y$ or $v_C^z$ must be colored with a distance-2 color.
\end{enumerate}

\begin{proof}
    It is simple to see that those two graphs are $\colo{1}{k}$-colorable.
    \begin{enumerate}
        \item Assume that in a $\colo{1}{k}$-coloring of $\textrm{VAR}_x^k$, $v_x$ and $\bar{v}_x$ are colored with the distance-1 color, then $v'_x$ and $\bar{v}_x'$ must be colored with a distance-2 color. Moreover, $v_x^1, \dots, v_x^{k - 1}$ are of degree $k + 1$ so they must be colored with a distance-2 color. Therefore, we have $k + 1$ vertices that are at distance-2 from each other and that must be colored with a distance-2 color, which is a contradiction. On the other hand, assume that in a $\colo{1}{k}$-coloring of $\textrm{VAR}_x^k$, $v$ and $\bar{v}_x$ are colored with a distance-2 color. Then they must have different distance-2 colors since they have a common neighbor. However, they both have $N(v'_x) \cup N(\bar{v}_x') - v_x - \bar{v}_x$ in their neighborhood at distance-2. So only one distance-2 color is available for both $v_x$ and $\bar{v}_x$, which is a contradiction.
        \item Assume that $\textrm{CL}_C^k$ is $\colo{1}{k}$-colorable with $v_C^x$, $v_C^y$ and $v_C^z$ using the distance-1 color. then the neighbors of $v_C^x$, $v_C^y$ and $v_C^z$ must be colored with a distance-2 color. Since $c_C$ is of degree $k + 1$, it must be colored with a distance-2 color and similarly for $c_C^1, \dots, c_C^{k - 3}$. But then $k + 1$ vertices that are at distance-2 from each other must have a distance-2 color, which is a contradiction.
    \end{enumerate}
\end{proof}
\end{lemma}

\begin{theorem}\label{thm:1_k_npc}
    For every integer $k \geq 4$, {\sc $\colo{1}{k}$-coloring} is NP-Complete even when restricted to bipartite planar graphs of maximum degree $k + 1$. 
\end{theorem}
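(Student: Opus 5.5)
We reduce from {\sc restricted planar 3-sat}, which is NP-Complete by Theorem~\ref{thm:NPC-problems}, using the gadgets $\textrm{VAR}_x^k$ and $\textrm{CL}_C^k$ of Figure~\ref{fig:gadgets_for_1_k} and the preceding lemma. Let $\varphi$ be an instance. For each variable $x$ we take one copy of $\textrm{VAR}_x^k$; for each clause $C$ we take one copy of $\textrm{CL}_C^k$. A clause of size $2$ is handled by giving its third port a private pendant structure that forces a distance-1 colour, or by using a two-port variant. The truth of $x$ is encoded by whether $v_x$ receives a distance-2 colour (true) or the distance-1 colour (false). By item~1 of the lemma, $\bar v_x$ is then coloured the opposite way, so it encodes $\neg x$.

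Since $x$ occurs twice positively and once negatively, we need two copies of the literal $x$ and one of $\neg x$. The copies are obtained either from two ports of the gadget or by duplicating $v_x$ via a short chain that propagates the distance-1/distance-2 type. For each occurrence of a literal $\ell$ in $C$, we link the literal vertex to the port $v_C^\ell$ by a connector. The connector should transmit the following: if the literal vertex has the distance-1 colour, then $v_C^\ell$ is forced to the distance-1 colour. The natural choice is to identify the port with the literal vertex, or to join them by a path of even length whose internal vertices have degree $k+1$ completed by pendant vertices. Such internal vertices are forced to distance-2 colours, and this alternation transmits the type.

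The gadgets are placed along a planar embedding of the variable–clause incidence graph, so $G'$ is planar. Bipartiteness follows if the gadgets are bipartite and all connectors have even length, with parity chosen consistently. The maximum degree stays at most $k+1$ because every gadget vertex already has degree at most $k+1$ and ports receive at most one extra edge.

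For correctness, suppose first that $\varphi$ is satisfiable. We colour each variable gadget according to the assignment. Every clause then has a true literal, so at least one of its ports carries a distance-2 colour, and we extend the colouring inside $\textrm{CL}_C^k$ using the "easy" colourability of the gadgets. Conversely, given a $\colo{1}{k}$-colouring of $G'$, we read off $x$ as true if and only if $v_x$ has a distance-2 colour. Item~2 of the lemma guarantees that each clause has a port with a distance-2 colour, and the connector implies that the corresponding literal is true.

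The main obstacle is the connector and the interface between gadgets. A distance-2 constraint crosses gadget boundaries, so attaching a port must not create new distance-2 conflicts that break the easy direction. We also need to ensure that the distance-2 colours are not forced to be globally inconsistent. I would first try to take connectors that are paths of length $2$ through a degree-$(k+1)$ middle vertex with $k-1$ pendants. Then I would check by hand the two gadget lemmas restricted to their boundary vertices, namely which distance-2 colours remain free at the ports, adjusting the path length to fix parity for bipartiteness.
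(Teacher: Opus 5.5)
There is a genuine gap, at exactly the step you flag as the main obstacle: how a literal vertex is tied to a clause port. The connector you propose to try first transmits nothing. In a $\colo{1}{k}$-colouring, adjacency only propagates ``distance-1 $\Rightarrow$ neighbour distance-2''. A vertex is forced to take the distance-1 colour only when all $k$ distance-2 colours are guaranteed to appear within distance $2$ of it.

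Take your connector $\ell\,m\,v_C^\ell$ with $k-1$ pendants at $m$. Colour the pendants with the distance-1 colour and $m$ with a distance-2 colour. Then $\ell$ can take the distance-1 colour while $v_C^\ell$ takes a distance-2 colour different from $m$'s and from the few other colours within distance $2$. So the implication ``literal vertex distance-1 $\Rightarrow$ port distance-1'' that your converse direction relies on fails, and an unsatisfiable $\varphi$ may yield a colourable graph. Longer paths whose internal vertices are all forced to distance-2 colours behave the same way; there is no ``alternation''. The same objection applies to ``duplicating $v_x$ via a chain''.

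The paper uses no connectors. It identifies $v_C^x$ with $v_x$ for a positive occurrence and with $\bar v_x$ for the negative one, so $v_x$ itself serves both positive occurrences. The port then \emph{is} the literal vertex, and the converse direction follows at once from items~1 and~2 of the lemma.

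With identification (your other option), the real work lies in the direction ``$\varphi$ satisfiable $\Rightarrow$ colourable''. Here ``extend inside $\textrm{CL}_C^k$ using the easy colourability of the gadgets'' does not suffice. The lemma only gives colourability of each gadget in isolation. An identified vertex merges the neighbourhoods of a variable gadget and up to two clause gadgets, so distance-2 constraints cross the gluing and colour choices in different gadgets interact. You name this issue but do not resolve it.

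The paper resolves it with one global colouring:
\begin{enumerate}
\item Pendant vertices get the distance-1 colour $a$.
\item In every clause gadget, the forced vertices $c_C, c_C^1, \dots, c_C^{k-3}$ get the fixed colours $k-2, 1, \dots, k-3$.
\item The literal vertices made false by the assignment, and some designated neighbours, get $a$.
\item The remaining clause-gadget vertices are coloured with the two reserved colours $k-1, k$. This works because, in the distance-2 graph, they lie on even cycles: each cycle through the clause gadgets subdivides a cycle of the bipartite graph $G_\varphi$, so it has length $\equiv 0 \pmod 4$.
\item Finally, each $\textrm{VAR}_x^k$ is completed explicitly.
\end{enumerate}
Some such global argument is what your proposal is missing.

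Your point that size-2 clauses need their spare port forced to the distance-1 colour is fair; the paper's text does not discuss it. A pendant tree suffices. Join the port to a vertex $w$ of degree $k+1$ whose other neighbours are one leaf and $k-1$ vertices, each completed to degree $k+1$ by leaves. These $k$ vertices of degree $k+1$ are forced to pairwise distinct distance-2 colours, all within distance $2$ of the port, so the port must take the distance-1 colour.
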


\begin{proof}
    We reduce {\sc restricted planar $3$-sat}, which is known to be NP-Complete by Theorem~\ref{thm:NPC-problems}. Let $\varphi$ be an instance of {\sc restricted planar $3$-sat} and let $G_\varphi$ be the planar variable-clause incidence graph of $\varphi$, that is, for every variable $x$ of $\varphi$, $G_\varphi$ contains a vertex $v_x$, and for every clause $C$ of $\varphi$, $G_\varphi$ contains a vertex $v_C$. Moreover, for every clause $C$ of $\varphi$ and variable $x$ appearing in $C$, $G_\varphi$ has the edge $v_xv_C$. Let $G$ be the graph obtained from $\varphi$ and $G_\varphi$ with one copy of $\textrm{VAR}^k_{x}$ from Figure~\ref{fig:gadgets_for_1_k} per vertex $v_x \in V(G_\varphi)$, one copy of $\textrm{CL}^k_C$ from Figure~\ref{fig:gadgets_for_1_k} per vertex $v_C \in V(G_\varphi)$, and for every clause $C$ of $\varphi$ and variable $x$ appearing in $C$, if $x$ appears positively in $C$ then we identify the vertex $v_x$ from $\textrm{VAR}^k_{x}$ with the vertex $v_C^x$ from $\textrm{CL}^k_C$. Otherwise, $x$ appears negatively in $C$ in which case we identify the vertex $\bar{v}_x$ from $\textrm{VAR}^k_{x}$ with the vertex $v_C^x$ from $\textrm{CL}^k_C$. Since $\varphi$ is an instance of {\sc restricted planar $3$-sat}, every variable $x$ appears twice positively and once negatively. Observe that $G$ has maximum degree $k + 1$ and that, since $G_\varphi$ is bipartite and planar, so is $G$.
    \begin{itemize}
        \item If $\varphi$ is satisfiable, then  let us fix an assignment of truth values satisfying $\varphi$. Let $a$ be the distance-1 color and let $1, \dots, k$ be the $k$ distance-2 colors. We first color every vertex of degree $1$ in $G$ with $a$. Then, for each clause $C$ of $\varphi$, we color $c_C$ with $k - 2$, and for every $i \in \{1, \dots, k - 3\}$, we color $c^i_C$ with $i$. Then if the variable $x$ is assigned $false$, then we color $v_x$ and $v'_x$ with $a$. Otherwise, we color $\bar{v}_x$ and $\bar{v}'_x$ with $a$, and for every clause $C$ containing $x$, we color the vertex of degree $2$ that is a neighbor of $v_x$ in $\textrm{CL}_C^k$ with $a$. 
        Now consider $G[\cup_{C\in\varphi}V(\textrm{CL}_C^k)]$, we will color the uncolored vertices of this induced subgraph. Observe that for every non-colored vertex $v$ of $G[\cup_{C\in\varphi}V(\textrm{CL}_C^k)]$, every neighbor of $v$ in $G[\cup_{C\in\varphi}V(\textrm{CL}_C^k)]$ is colored with the distance-1 color. Therefore, it is enough to color $G^2$, the graph constructed from every uncolored vertices of $G[\cup_{C\in\varphi}V(\textrm{CL}_C^k)]$ such that $uv$ is an edge if $G^2$ if and only if $u$ and $v$ are at distance $2$ from each other in $G[\cup_{C\in\varphi}V(\textrm{CL}_C^k)]$. Consider $v_x$ in $G$. If $x$ was assigned $false$, the neighbors of $v_x$ that are in $G^2$ are not colored yet and only belong to even cycles in $G^2$. Indeed, every cycle in $G[\cup_{C\in\varphi}V(\textrm{CL}_C^k)]$ has a length congruent to $0$ modulo $4$ since every cycle of $G[\cup_{C\in\varphi}V(\textrm{CL}_C^k)]$ is a subdivision of a cycle of $G_\varphi$ and $G_\varphi$ is bipartite, and the vertices from $G^2$ that are the neighbors of vertices $v_x$ where $x$ is assigned $false$ are the vertices of the subdivisions.
        Therefore, we can color them with the two distance-2 colors $k - 1$ and $k$. Finally, the only uncolored vertices belong to copies of $\textrm{VAR}_x^k$. For every variable $x$ of $\varphi$, if $x$ is assigned $true$ then we color $v_x$ and $v'_x$ with the color $1$, otherwise, we color $\bar{v}_x$ and $\bar{v}_x'$ with the color $1$. Then for every $i \in \{1, \dots, k - 1\}$, we color $v_x^i$ with $i + 1$. And finally, we color the common neighbor of $v_x$ and $\bar{v}_x$ with the color $2$. One can check that this coloring is valid and therefore, we managed to $\colo{1}{k}$-color $G$ as desired.

        \item If $G$ is $\colo{1}{k}$-colorable, then let us fix a $\colo{1}{k}$-coloring of $G$. For every variable $x$ of $\varphi$, we set $x$ to be true if and only if $v_x$ is colored with a distance-2 color. Assume that this assignment of truth values does not satisfy $\varphi$. So there exists a clause $C$ containing only false literals. But then, $v_C^x$, $v_C^y$ and $v_C^z$ are colored with a distance-1 color, and so $\textrm{CL}_C^k$ is not $\colo{1}{k}$ by Lemma~\ref{lemma:tool_for_npc_gap_3_1}, which is a contradiction. 
    \end{itemize}
    So $\varphi$ is satisfiable if and only if $G$ is $\colo{1}{k}$-colorable. Since {\sc restricted planar $3$-sat} is NP-hard, so is {\sc$\colo{1}{k}$-coloring} for every $k \geq 4$, even when restricted to planar graphs of maximum degree $k + 1$.
\end{proof}

\subsection{{\sc $\colo{2}{k}$-coloring} in planar graphs}

\begin{figure}
    \centering
    \includegraphics[width=0.8\linewidth]{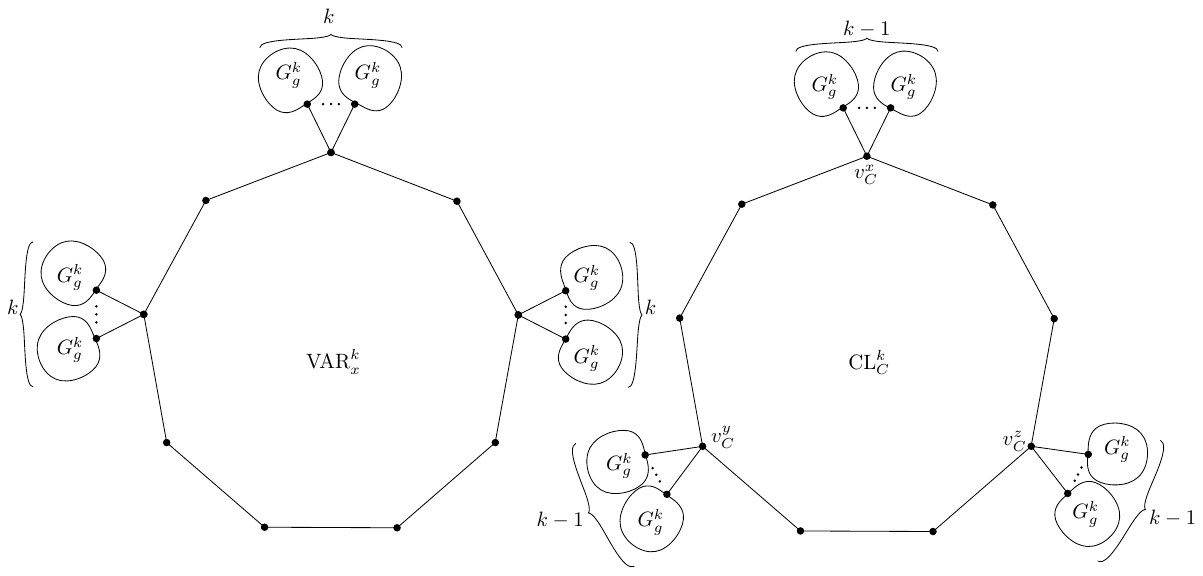}
    \caption{The variable and clause gadgets used in the proof of Theorem~\ref{thm:2k_structural_gap}.}
    \label{fig:clause_gadget_for_2_k_npc}
\end{figure}

\begin{theorem}\label{thm:2k_structural_gap}
    For every integer $k\geq 1$, $g \geq 3$, and for every class of graph $\mathcal{C}$ such that
    \begin{itemize}
        \item $\mathcal{C}$ contains $P_2$, $P_5$ and $P_8$, and
        \item every graph in $\mathcal{C}$ is planar and bipartite, and
        \item $\mathcal{C}$ is stable by disjoint union,
    \end{itemize}
    if there exists a planar graph $H_\mathcal{C}^{k, g}$ of girth at least $g$ with a vertex $s$ such that 
    \begin{itemize}
        \item in any $\colo{2}{k}$-coloring of $H_\mathcal{C}^{k, g}$, $s$ is colored with a distance-2 color, and
        \item $V(G_\mathcal{C}^{k, g})$ can be partitioned into a set of vertices inducing a graph from $\mathcal{C}$ and $k$ $2$-independent sets,
    \end{itemize}
    then it is NP-Complete to decide whether the vertex set of a planar graph $G$ of girth at least $g$ can be partitioned into a set of vertices inducing a graph from $\mathcal{C}$ and $k$ $2$-independent sets, or is not $\colo{2}{k}$-colorable.
\end{theorem}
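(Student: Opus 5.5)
We will reduce from {\sc restricted planar 3-sat} (Theorem~\ref{thm:NPC-problems}), building the instance from the variable and clause gadgets of Figure~\ref{fig:clause_gadget_for_2_k_npc}. The hypothetical graph $H_\mathcal{C}^{k,g}$ will serve as a \emph{forcing gadget}. Attaching a copy of $H_\mathcal{C}^{k,g}$ to a vertex $w$ by identifying $w$ with $s$ forces $w$ to take a distance-2 color in every $\colo{2}{k}$-coloring. We also use the second hypothesis on $H_\mathcal{C}^{k,g}$: such an attachment is harmless for the strong coloring, since the copy can be partitioned into a $\mathcal{C}$-part and $k$ $2$-independent sets. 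To keep this compatible with the rest of the graph, we permute the $k$ distance-2 colors inside the copy. If necessary we attach $s$ through a short path, so that the copy is at distance at least $3$ from other forced vertices.

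Given $\varphi$, we take its planar incidence graph $G_\varphi$. Each variable $x$ becomes a long path, or cycle, whose vertices are alternately forced, via copies of $H$, to be distance-2 colored. The free vertices in between must then use the two distance-1 colors, and these alternate along the path. The truth value of $x$ is encoded by which distance-1 color appears at the three ``ports'' of $x$; a negated occurrence is taken at a port of the opposite parity. Each clause $C$ is a small gadget whose three input paths meet at a forced center. Since $k$ distance-2 colors are available at distance $2$, the paths have lengths chosen among $2,5,8$ (mod $3$) so that the distance-1 colors reach the center consistently. The gadget is designed so that if all three literals are false, two vertices at distance $2$ are forced to take the same distance-2 color, or a vertex with all neighbors forced has no available color. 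Everything is laid out planarly along $G_\varphi$, and every path is subdivided to length congruent to $0 \bmod 3$ and long enough that the girth is at least $g$. Since $H$ has girth at least $g$ and is attached by cut vertices, the resulting graph $G$ has girth at least $g$.

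We then check the two directions. \textbf{Soundness:} if $G$ is $\colo{2}{k}$-colorable, the forcing and alternation arguments determine a consistent truth value at each variable. The clause gadget forbids an all-false clause, so the assignment satisfies $\varphi$. \textbf{Completeness:} if $\varphi$ is satisfiable, we must construct the stronger partition. We put the vertices that would receive distance-1 colors into one set $X$. By construction $X$ induces a disjoint union of paths, and we arrange the gadget lengths so that its components are exactly copies of $P_2$, $P_5$ and $P_8$, together with the $\mathcal{C}$-parts of the copies of $H$. The remaining vertices are colored with the $k$ distance-2 colors; we do this by hand on the clause gadgets, where the satisfied literal frees a color. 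Since $\mathcal{C}$ is closed under disjoint union, $G[X]\in\mathcal{C}$.

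The main obstacle is the design of the clause gadget. It must forbid all-false clauses under the weak coloring, where arbitrary proper $2$-coloring is allowed on the non-forced part. At the same time, under every satisfying pattern, the distance-1 part must decompose only into $P_2$, $P_5$ and $P_8$, which is presumably why exactly these three paths appear. I would first design the clause gadget for $k=1$, so that all non-forced vertices get a single distance-2 color, and determine the component lengths. For larger $k$ I would then add pendant forced vertices, in the spirit of the degree-forcing in Theorem~\ref{thm:1_k_npc}, to consume the extra $k-1$ distance-2 colors.
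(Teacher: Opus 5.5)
There is a genuine gap. The proposal is a plan whose decisive part is left open: you call the clause gadget ``the main obstacle'' and only say how you would try to design it. Moreover, the parts you do specify do not work.

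A vertex with a distance-2 color imposes no relation between the distance-1 colors of its neighbours. Also, the two distance-1 colors can be swapped independently on each connected component of the subgraph induced by the distance-1-colored vertices. This breaks your variable gadget in several ways:
\begin{itemize}
\item On your variable path, where every other vertex is forced through a copy of $H$, nothing makes the free vertices alternate; they are not even adjacent.
\item Nothing forces the free vertices to take distance-1 colors once $k$ is large.
\item ``Which distance-1 color appears at a port'' carries no information, because of the swap symmetry.
\item For $k=1$ the path is not even colorable, since two forced vertices at distance $2$ need distinct distance-2 colors.
\end{itemize}
The same objection applies to a clause gadget whose inputs ``meet at a forced center'': the center cannot compare the incoming distance-1 colors. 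Identifying $w$ with $s$ has its own problem. It creates distance-2 conflicts between $N_H(s)$ and the neighbours of $w$ that permuting colors need not resolve. Attaching $s$ through a path instead simply loses the forcing.

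The missing idea is to encode truth by the \emph{type} of color of a vertex (distance-1 versus distance-2), which is invariant under swapping. The clause obstruction should then be an odd cycle.
\begin{itemize}
\item \textbf{Clause gadget.} In the paper, $\textrm{CL}_C^k$ is built on a $9$-cycle through $v_C^x, v_C^y, v_C^z$, spaced three apart. It is designed so that the odd cycle can only be broken at one of these three vertices. Since $C_9$ is not bipartite, at least one of them must get a distance-2 color. Deleting one, two or three of these equally spaced vertices leaves $P_8$, $P_2\cup P_5$, or three copies of $P_2$. This is exactly where $P_2$, $P_5$, $P_8$ in the hypothesis on $\mathcal{C}$ come from.
\item \textbf{Variable gadget.} Similarly, $\textrm{VAR}_x^k$ is a $9$-cycle in which exactly one of $v_x, \bar v_x$ is distance-2 colored, leaving a $P_8$.
\item \textbf{Links.} A literal is attached to its clause by a single edge $v_xv_C^x$ (or $\bar v_xv_C^x$), and the gadgets prevent both endpoints of this edge from being distance-2 colored.
\item \textbf{Role of $H$.} $H$ is used only to neutralize the third input of a two-literal clause, through an edge $sv_C^z$ rather than an identification.
\end{itemize}
Finally, lengthening paths to raise the girth is beside the point; what must survive is the short odd cycle. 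One may assume $g\le 9$, because for $g\ge 10$ no such $H$ exists. Indeed, $k+1$ copies of $H$ plus a common neighbour of the copies of $s$ would give a planar graph of girth at least $g$ that is not $\colo{2}{k}$-colorable, contradicting Theorem~\ref{thm:choi_and_liu}.
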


\begin{proof}
    We reduce {\sc restricted planar 3-sat}, which is known to be NP-Complete by Theorem~\ref{thm:NPC-problems}. Let $\varphi$ be an instance of {\sc restricted planar 3-sat} and let $G_\varphi$ be the planar variable-clause incidence graph of $\varphi$, that is, for every variable $x$ of $\varphi$, $G_\varphi$ contains a vertex $v_x$, and for every clause $C$ of $\varphi$, $G_\varphi$ contains a vertex $v_C$. Moreover, for every clause $C$ of $\varphi$ and variable $x$ appearing in $C$, $G_\varphi$ has the edge $v_xv_C$.
    Let $H_\mathcal{C}^{k, g}$ be a graph such that there exists a $\colo{2}{k}$-coloring of $H_\mathcal{C}^{k, g}$ such that the vertices of $H_\mathcal{C}^{k, g}$ colored with a distance-1 color induce a graph from $\mathcal{C}$, and such that there exists a vertex $s \in V(H_\mathcal{C}^{k, g})$, for every $\colo{2}{k}$-coloring of $H_\mathcal{C}^{k, g}$, $s$ is colored with a distance-2 color. 
    We construct from $G_\varphi$ and $H_\mathcal{C}^{k, g}$ the graph $G$ as follows :
    \begin{itemize}
        \item For every variable $x$ of $\varphi$, we add a copy of the graph $\textrm{VAR}_x^k$ from Figure~\ref{fig:clause_gadget_for_2_k_npc}, and
        \item For every clause $C$ with variables $x$, $y$ and $z$, we add the gadget $\textrm{CL}_C^k$ from Figure~\ref{fig:clause_gadget_for_2_k_npc}, and
        \item For every clause containing only two literals, say $x$ and $y$, we add a copy of $\textrm{CL}_C^k$ from Figure~\ref{fig:clause_gadget_for_2_k_npc}, a copy of $H_\mathcal{C}^{k, g}$, and we connect with an edge $v_C^z$ to the vertex $s$ from the copy of $H_\mathcal{C}^{k, g}$.
        \item Finally, for every clause $C$ and every vertex $x$ appearing in $C$, if $x$ appears positively in $C$ then we connect the vertex $v_x$ to the vertex $v^C_x$ with an edge, otherwise we connect the vertex $\bar{v}_x$ to the vertex $v^C_x$ with an edge.
    \end{itemize}
    Observe that since $H_\mathcal{C}^{k, g}$, $\textrm{VAR}^k_x$ and $\textrm{CL}^k_C$ are planar, and $G_\varphi$ is the variable-clause incidence graph of $\varphi$, an instance of {\sc restricted planar 3-sat}, then $G$ is a planar graph. By Theorem~\ref{thm:choi_and_liu}, every planar graph of girth at least 10 is $\colo{2}{1}$-colorable, so we can assume that $g \leq 9$ and so $\textrm{VAR}^k_x$ and $\textrm{CL}^k_C$ are allowed to have a cycle of length $9$, so $G$ is of girth at least $g$ as well. Also, observe that both $\textrm{VAR}^k_x$ and $\textrm{CL}^k_C$ are $\colo{2}{k}$-colorable, and, in any $\colo{2}{k}$-coloring of $\textrm{VAR}^k_x$, $v_x$ is colored with a distance-1 color if and only if $\bar{v}_x$ is colored with a distance-2 color so we can $\colo{2}{k}$-color $\textrm{VAR}^k_x$ such that the vertices colored with a distance-1 color induce a $P_8$.
    Similarly, in any $\colo{2}{k}$-coloring of $\textrm{CL}^k_C$, at least one of $v_C^x$, $v_C^y$ or $v_C^z$ is colored with a distance-2 color so we can $\colo{2}{k}$-color $\textrm{CL}^k_C$ such that the vertices colored with a distance-1 color induce a disjoint union of $P_2$, $P_5$ and $P_8$. 
    \begin{itemize}
        \item If $\varphi$ is satisfiable, then let us fix a satisfiable truth assignment to the variables of $\varphi$. For every variable $x$, if $x$ is assigned true, then we color $v_x$ with a distance-1 color, and $\bar{v}_x$ with a distance-2 color. Otherwise, we color $v_x$ with a distance-2 color and $\bar{v}_x$ with a distance-1 color. Then, we can easily extend this partial coloring in order to color every copy of $\textrm{VAR}_g^k$ by coloring the uncolored vertices from the cycle of length $9$ by alternating between the two distance-1 colors, hence obtaining a $P_8$ colored with the two distance-1 colors, and then by coloring the copies of $H_\mathcal{C}^{k, g}$ such that the vertices colored with a distance-1 color induce a graph from $\mathcal{C}$. Then, since every clause is satisfied, every clause must have a literal whose value is true. If this literal is $x$, then the vertex $v_C^x$ is connected by an edge to $v_x$ which is itself already colored with a distance-1 color. Therefore, there is an available distance-2 color to color $v_C^x$. Similarly, if the literal is $\neg x$, then $v_C^x$ is connected by an edge to $\bar{v}_x$ which is itself already colored with a distance-1 color. Therefore, there is an available distance-2 color to color $v_C^x$. So in every copy of $\textrm{CL}^k_C$, at least one of $v_C^x$, $v_C^y$ or $v_C^z$ is colored with a distance-2 color, and so we can easily extend the current partial $\colo{2}{k}$-coloring in order to color every copy of $\textrm{CL}^k_C$ since the vertices from the cycle of length $9$ that are not yet colored induce a disjoint union of $P_2$, $P_5$ or $P_8$. Finally, for every clause containing only two literals, we need to color the copy of $H_\mathcal{C}^{k, g}$ that is connected from $s$ to $v_C^z$, and this is doable since one distance-2 color is available for $s$. So $G$ is $\colo{2}{k}$-colorable and the $2$ distance-$1$ colors induce a disjoint union of $P_2$, $P_5$, $P_8$ and graphs from $\mathcal{C}$.
        \item If $G$ is $\colo{2}{k}$-colorable, then let us fix a $\colo{2}{k}$-coloring of $G$. Then, for every variable $x$ of $\varphi$, we set $x$ to be true if and only if $v_x$ is colored with a distance-1 color. By a previous observation, for every clause $C$, and for any $\colo{2}{k}$-coloring of $G$, there exists a variable $x$ appearing in $C$ such that $v_C^x$ is colored with a distance-2 color. If $x$ appears positively in $C$, then $v_C^x$ is connected with an edge to $v_x$ in which case $v_x$ must be colored with a distance-1 color, and so $C$ is satisfied. Otherwise, $v_C^x$ is connected with an edge to $\bar{v}_x$, and so $\bar{v}_x$ must be colored with a distance-1 color. Therefore, $v_x$ must be colored with a distance-2 color and $x$ is assigned false. Since $x$ appears negatively in $C$, $C$ is satisfied. Therefore,every clause of $\varphi$ is satisfied, and so is $\varphi$.
    \end{itemize}
    So $\varphi$ is satisfiable if and only if $G$ is $\colo{2}{k}$-colorable, so it is NP-Complete to decide whether the vertex set of a planar graph of girth at least $g$ can be partitioned into a set of vertices inducing a graph from $\mathcal{C}$ and $k$ $2$-independent sets, or is not $\colo{2}{k}$-colorable.
\end{proof}

\begin{corollary}\label{coro:2k_gap_strutrural}
    For every integer $k \geq 1$, deciding whether the vertex set of a planar graph can be partitioned into a set of vertices inducing a disjoint union of $P_2$, $P_5$ and $P_8$ and $k$ 2-independent sets or is not $\colo{2}{k}$-colorable is NP-Complete. 
\end{corollary}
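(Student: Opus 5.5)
This will follow from Theorem~\ref{thm:2k_structural_gap} applied with $g = 3$ and $\mathcal{C}$ the class of all disjoint unions of copies of $P_2$, $P_5$ and $P_8$. This class contains $P_2$, $P_5$ and $P_8$, is closed under disjoint union, and each of its members is a planar bipartite graph, since it is a forest. The only remaining hypothesis to check is the existence of a gadget $H = H_\mathcal{C}^{k,3}$: a planar graph with a vertex $s$ such that
\begin{itemize}
\item every $\colo{2}{k}$-coloring of $H$ gives $s$ a distance-2 color, and
\item $H$ admits a $\colo{2}{k}$-coloring whose distance-1 colored vertices induce a disjoint union of $P_2$'s, $P_5$'s and $P_8$'s.
\end{itemize}
The girth condition is vacuous for $g = 3$.

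To force $s$ onto a distance-2 color, I will use a triangle. In any $\colo{2}{k}$-coloring, a triangle can contain at most two vertices with distance-1 colors, and those two must get different distance-1 colors. So at least one vertex of the triangle gets a distance-2 color. This alone does not pin the distance-2 color on $s$.

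To pin it, I would take $s$ of large degree, say adjacent to many disjoint triangles or to a structure making it impossible for $s$ to be distance-1. The simplest candidate is $H = K_4$ minus nothing, i.e.\ $K_4$ itself, with $s$ a vertex of $K_4$. At most two vertices of $K_4$ can have distance-1 colors, and any two distance-2 colored vertices of $K_4$ need distinct colors. But this does not force a specific vertex, so I would add symmetry-breaking. Concretely:
\begin{enumerate}
\item Take $s$ and attach to it $k+2$ pendant triangles $s a_i b_i$, which is planar.
\item Suppose $s$ gets a distance-1 color, say $1$.
\item In each triangle, $a_i$ and $b_i$ cannot both be distance-1, since the only remaining distance-1 color is $2$ and they are adjacent. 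So each triangle contributes a distance-2 vertex.
\item All these $k+2$ distance-2 vertices are neighbors of $s$, hence pairwise at distance at most $2$, and so they need $k+2 > k$ distinct distance-2 colors. This is a contradiction.
\end{enumerate}
Hence $s$ must receive a distance-2 color.

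For the good coloring, give $s$ distance-2 color $1$ and give every $a_i, b_i$ distance-1 colors $1$ and $2$ respectively. The distance-1 vertices then induce $k+2$ disjoint copies of $P_2$, which lies in $\mathcal{C}$. Also $s$ is the only distance-2 vertex, so its coloring is valid. When $H$ is inserted into the reduction, $s$ gets one extra neighbor $v_C^z$. This is handled inside the proof of Theorem~\ref{thm:2k_structural_gap}, which only needs some distance-2 color to be available for $s$.

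The main obstacle is mild. I must make sure the forcing argument truly uses only the two distance-1 colors, and that $H$ is planar; pendant triangles at one vertex are clearly planar. Once this is confirmed, Theorem~\ref{thm:2k_structural_gap} yields exactly the claimed NP-completeness of the promise problem for all $k \ge 1$ on planar graphs.
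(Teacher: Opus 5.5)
Your proof is correct and is essentially the paper's: the paper also applies Theorem~\ref{thm:2k_structural_gap} with $g=3$, using the same gadget of triangles sharing the vertex $s$, and forces $s$ onto a distance-2 color by the same counting argument. There, $s$ having a distance-1 color puts a distance-2 vertex in each triangle, and these vertices are pairwise at distance at most $2$. The paper uses $k+1$ triangles, which already suffices, and your choice of $\mathcal{C}$ as all disjoint unions of $P_2$, $P_5$, $P_8$ states the disjoint-union closure hypothesis more carefully than the paper's $\{P_2,P_5,P_8\}$.
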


\begin{proof}

Consider a graph $G$ constructed from $k + 1$ copies of $K_3$ and a special vertex $s$ such that $s$ has been identified with exactly one vertex per $K_3$. We can $\colo{2}{1}$-color this graph by choosing any distance-2 color for $s$ and 2-coloring the remaining vertices, so the vertices colored with a distance-1 color induce a disjoint union of $P_2$. Now assume that $s$ is colored with a distance-1 color. Then each of the $k + 1$ triangles must contain a vertex different from $s$ that is colored with a distance-2 color. Since this graph is of diameter 2, we obtain a contradiction. Therefore, we can apply Theorem~\ref{thm:2k_structural_gap} with $g = 3$ and $\mathcal{C} = \{P_2, P_5, P_8\}$ to conclude.
\end{proof}

\begin{figure}
    \centering
    \begin{subfigure}{\linewidth}
        \centering
        \includegraphics[width=0.8\linewidth]{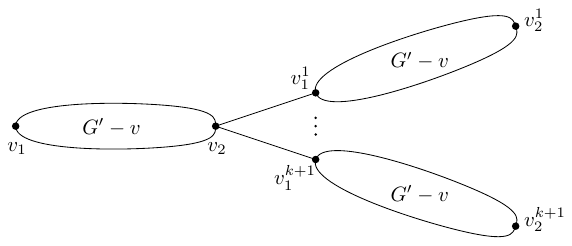}
        \caption{A graph constructed from $k + 2$ copies of $G' - v$ for the case where $\mathcal{S} = \{c, [1, \dots, k]\times[1, \dots, k]\}$ of the proof of Proposition~\ref{prop:2_k_npc_tool}. In any $\colo{2}{k}$-coloring of this graph, $v_2$ must be colored with a distance-1 color, therefore if we add a common neighbor to $v_1$ and $v_2$, we are reduced to the case where $\mathcal{S} = \{[1, \dots, k]\times[1, \dots, k]\}$.}
        \label{fig:gadget_for_2_k_tool_one}
    \end{subfigure}
    \vspace{1em}
    \begin{subfigure}{\linewidth}
        \centering
        \includegraphics[width=0.8\linewidth]{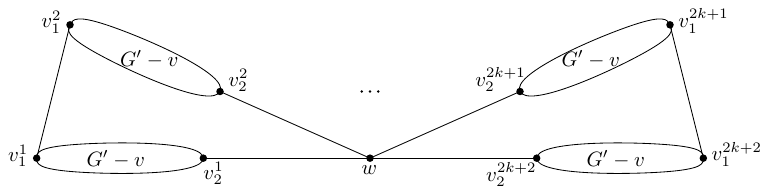}
        \caption{A graph constructed from $2k + 2$ copies of $G' - v$ for the case where  $c \in \mathcal{S}$ and there exists $(S, S') \in \mathcal{S}$ with $|S'| < k$ of the proof of Proposition~\ref{prop:2_k_npc_tool}. In any $\colo{2}{k}$-coloring of this graph, $w$ must be colored with a distance-2 color.}
        \label{fig:gadget_for_2_k_tool_two}
    \end{subfigure}
    \caption{Two graphs constructed from copies of $G' - v$ and used in the proof of Proposition~\ref{prop:2_k_npc_tool}.}
    \label{fig:gadget_for_2_k_tool}
\end{figure}

The following Proposition provides an equivalent condition for Theorem~\ref{thm:2k_structural_gap} to be applicable.

\begin{proposition}\label{prop:2_k_npc_tool}
    For every integers  $k \geq 1$ and $g \geq 3$, there exists a $\colo{2}{k}$-colorable graph $G$ of girth at least $g$ with a vertex $s$ such that in any $\colo{2}{k}$-coloring of $G$, $s$ must be colored with a distance-2 color if and only if there exists a graph $G'$ of girth at least $g$ that is not $\colo{2}{k}$-colorable.
\end{proposition}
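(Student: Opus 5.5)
The plan is to prove the two implications separately. The forward one is a direct construction; the backward one needs a minimality argument.

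\emph{($\Rightarrow$)} Suppose $G$ is $\colo{2}{k}$-colorable, has girth at least $g$, and $s$ receives a distance-2 color in every $\colo{2}{k}$-coloring. Take $k+1$ disjoint copies $G_1,\dots,G_{k+1}$ of $G$, with special vertices $s_1,\dots,s_{k+1}$, and add a new vertex $w$ adjacent to every $s_i$. Call the result $G'$. Every cycle of $G'$ lies inside a single copy, because $w$ is a cut vertex and the copies hang off it as pendant blocks. Hence $G'$ has girth at least $g$. Now fix a $\colo{2}{k}$-coloring of $G'$. Its restriction to each $G_i$ is a $\colo{2}{k}$-coloring of $G$, so every $s_i$ gets a distance-2 color. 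The $s_i$ are pairwise at distance $2$ through $w$, so they need $k+1$ distinct distance-2 colors. This is a contradiction, so $G'$ is not $\colo{2}{k}$-colorable.

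\emph{($\Leftarrow$)} Let $G'$ be a graph of girth at least $g$ that is not $\colo{2}{k}$-colorable, and take it minimal under vertex deletion. Then for every vertex $v$, the graph $G'-v$ is $\colo{2}{k}$-colorable, and no coloring of $G'-v$ extends to $v$. Fix $v$. To each $\colo{2}{k}$-coloring $\phi$ of $G'-v$ attach a \emph{type} $(S,S')$, where $S$ is the set of distance-1 colors on $N(v)$ and $S'$ is the set of distance-2 colors on $N(v)\cup N(N(v))$. Optionally add a marker $c$ recording a further local obstruction, such as a distance-2 color repeated around $v$. Non-extendability says exactly that every realizable type blocks $v$. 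In the simplest reading this means $S=\{1,2\}$ and $S'=[1,\dots,k]$, but the marker lets the type set $\mathcal{S}$ contain other patterns. Let $\mathcal{S}$ be the set of realizable types.

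The gadget is built from several copies of $G'-v$ glued along the vertex $v$ that is missing from each copy. Specifically, new vertices are made adjacent to the sets $N(v)$ of different copies, and some of these new vertices are merged. Each such attachment point is then blocked from every colour option except the one we want. Gluing only through new vertices attached to copies of $N(v)$, at mutual distance at least $2$, keeps the girth at least $g$. Where needed, paths are subdivided to keep the girth large.

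The construction splits into cases according to $\mathcal{S}$.
\begin{itemize}
\item If every type in $\mathcal{S}$ uses all $k$ distance-2 colors around $v$, then a new vertex $w$ attached as $v$ cannot take a distance-2 color. With $k+2$ copies we can produce a vertex forced to a distance-1 color, as in Figure~\ref{fig:gadget_for_2_k_tool_one}.
\item Adding a common neighbour to two such forced vertices then reduces the case $\mathcal{S}=\{c,[1,\dots,k]\times[1,\dots,k]\}$ to the pure case.
\item If instead some type has $|S'|<k$, we use $2k+2$ copies as in Figure~\ref{fig:gadget_for_2_k_tool_two}. They are arranged so that an attached vertex $w$ would receive both distance-1 colors in its neighbourhood, which forces $w$ to a distance-2 color. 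This $w$ is our $s$.
\end{itemize}
In each case we must also show that the gadget is $\colo{2}{k}$-colorable. For this we pick one realizable coloring per copy, permuting color names so the copies are compatible at the glued vertices.

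The main obstacle is this backward case analysis. We must make sure the list of types in $\mathcal{S}$ is exhaustive. We must also check that each gluing simultaneously preserves girth at least $g$, preserves colorability, and genuinely forces $s$ onto a distance-2 color. I would first try to reduce every case to the case where some vertex is forced to a distance-1 color. From such a vertex, forcing a neighbouring configuration onto a distance-2 color is comparatively routine.
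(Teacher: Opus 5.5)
Your forward direction is correct and is the paper's argument; the cut-vertex remark is a fine justification of the girth. The backward direction has a genuine gap. It is not a construction, and in the setup you choose, the gadgets you point to are not even defined.

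The missing idea is that $v$ must have \emph{exactly two} neighbors. The paper gets this as follows. For $g\le 6$ it just uses the Choi--Liu graph $F_3(u;k)$. For $g\ge 7$ it takes $G'$ planar, so $2$-degenerate since its girth is at least $7$. It also takes $G'$ minimal, so of minimum degree at least $2$, since a pendant vertex always extends with a distance-1 color. Hence some $v$ has $N(v)=\{v_1,v_2\}$.

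A coloring of $G'-v$ then fails to extend exactly when one of two things happens. Either $(B_c)$: $v_1,v_2$ share a distance-2 color. Or $(B_{S_1,S_2})$: $v_1,v_2$ get the two different distance-1 colors, and the distance-2 color sets $S_1$ on $N(v_1)$ and $S_2$ on $N(v_2)$, recorded \emph{separately}, satisfy $|S_1\cup S_2|=k$. With $K=[1,\dots,k]$, the paper's five cases are exhaustive:
\begin{itemize}
\item $\mathcal S=\{c\}$;
\item $\mathcal S=\{(K,K)\}$;
\item $c\notin\mathcal S$ with some $|S_2|<k$;
\item $\mathcal S=\{c,(K,K)\}$;
\item $c\in\mathcal S$ with some $|S_2|<k$.
\end{itemize}
Every gadget uses $v_1$ and $v_2$ in different roles: an edge between copies of $v_1$ plus a new vertex on copies of $v_2$; edges $v_2v_1^i$; the path $v_1v_1'sv_2'v_2$.

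With an arbitrary $v$ and your type $(S,S')$, this breaks down in several places.
\begin{itemize}
\item A new vertex joined to all of $N(v)$ in a copy of $G'-v$ just rebuilds $G'$, so it cannot sit in a colorable gadget. You must split $N(v)$, and then you need information per part.
\item If $\deg v\ge 3$, the obstruction $c$ (two neighbors share a distance-2 color) does not say \emph{which} neighbors. So even $\mathcal S=\{c\}$ gives no forced vertex.
\item Likewise, $c\notin\mathcal S$ no longer forces the neighbors of $v$ onto distance-1 colors. Yet that forcing drives all the other cases.
\item Your $S'$ lumps together every distance-2 color within distance $2$ of $v$. So a blocking type with $|S'|<k$ is necessarily of type $c$, and your third bullet has no handle for making $w$ see both distance-1 colors.
\end{itemize}

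The paper's key case instead needs a $(B_{S_1,S_2})$-coloring with $|S_2|<k$. The distinct distance-1 colors on $v_1,v_2$, together with the edge between copies of $v_1$, force the copies of $v_2$ onto different distance-1 colors, and hence force $s$ onto a distance-2 color. Meanwhile, the color missing from $S_2$ is what leaves $s$ colorable. Exhaustiveness, colorability and forcing, which you defer as the ``main obstacle'', are the whole content of this direction.

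Note also that the statement as printed does not require $G'$ to be planar, whereas the paper uses planarity to get the degree-2 vertex. If you keep $G'$ arbitrary, you would additionally need a reduction to such a two-neighbor configuration, which your sketch does not provide.
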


\begin{proof}
    On one hand, let $G$ be a $\colo{2}{k}$-colorable graph of girth $g$ with a special vertex $s$ such that in any $\colo{2}{k}$-coloring of $G$, $s$ must be colored with a distance-2 color. We construct a graph $G'$ starting from $k + 1$ copies of $G$ whose special vertices are renamed $s_1, \dots, s_{k + 1}$, and then we add a vertex $v$ that is a neighbor of $s_1, \dots, s_{k + 1}$. The vertices $s_1, \dots, s_{k + 1}$ must all be colored with a distance-2 color, but they are all at distance 2 from each other hence $G'$ is not $\colo{2}{k}$-colorable. 

    On the other hand, if $g \leq 6$, then consider the graph $F_3(u; k)$ From Figure $2$ of~\cite{choi2025propersquarecoloringssparse}. This graph is of girth $6$, is $\colo{2}{k}$-colorable, and in every $\colo{2}{k}$-coloring of $F_3(u; k)$, $u$ must be colored with a distance-2 color. Otherwise, assume that $g \geq 7$, so let $G'$ be a planar graph of girth $7$ that is not $\colo{2}{k}$-colorable and that is minimal in the sense that every proper subgraph of $G'$ is $\colo{2}{k}$-colorable. Since $G'$ is of girth $7$, it is $2$-degenerate. Observe that by minimality, the minimum degree of $G'$ is $2$ so let $v$, $v_1$, $v_2 \in V(G')$ be such that $N_{G'}(v) = \{v_1, v_2\}$.
    In order, for $G'$, to not be $\colo{2}{k}$-colorable, any $\colo{2}{k}$-coloring of $G' - v$ must create one of the following obstructions that forbid any extension to a $\colo{2}{k}$-coloring of $G'$. In what follows, we say that $1, \dots, k$ are the $k$ distance-2 colors.
    \begin{itemize}
        \item $(B_c)$: the two neighbors of $v$ are colored with the same distance-2 color.
        \item $(B_{S_1, S_2})$, $S_1, S_2 \subseteq[1,\dots,k]$ with $|S_1 \cup S_2| = k$: 
        $v_1$ and $v_2$ are colored with distinct distance-1 colors, and $S_1$ (resp. $S_2$) is exactly the set of distance-$2$ colors used to color the neighbors of $v_1$ (resp. $v_2$). 
    \end{itemize}
    Let $\mathcal{S} \subseteq $\{c\}$ \cup \{(S_1, S_2) : S_1, S_2\subseteq[1,\dots,k] \textrm{ and } |S_1 \cup S_2| = k\}$. We denote by $(B_\mathcal{S})$ the event $(B_e)$ is an obstruction for the $\colo{2}{k}$-coloring of $G'$ if and only if $e \in \mathcal{S}$. We thus need to prove that $G$ exists for every possible value of $\mathcal{S}$.
    \begin{itemize}
        \item If $\mathcal{S} = \{c\}$, then in every $\colo{2}{k}$-coloring of $G' - v$, $v_1$ and $v_2$ must be colored with a distance-2 color so we can take $G = G' - v$ and $s = v_1$ or $s = v_2$.
        \item If $\mathcal{S} = \{[1, \dots, k] \times [1, \dots, k]\}$, then we construct $G$ from a copy of $G' - v$ and by adding the vertices $v_1'$ with an edge $v_1v_1'$, $v_2'$ with an edge $v_2v_2'$, and $s$ with the edges $v_1's$ and $v_2's$. It is simple to extend any $\colo{2}{k}$-coloring of $G' - v$ into a $\colo{2}{k}$-coloring of $G$. Moreover, for any $\colo{2}{k}$-coloring of $G' - v$, $v_1$ and $v_2$ are colored with distinct distance-1 colors, so in any $\colo{2}{k}$-coloring of $G$, $v_1'$ and $v_2'$ are colored with distinct distance-1 colors as well and so $s$ must be colored with a distance-2 color.
        \item If $c \not\in\mathcal{S}$, then we can assume that $\mathcal{S} - ([1, \dots, k], [1, \dots, k]) \neq \emptyset$ otherwise we are in the previous case. Assume without loss of generality that there exists $(S_1, S_2) \in \mathcal{S}$ with $|S_2| < k$. We construct the graph $G$ from two copies of $G' - v$. We then add an edge between the two copies of $v_1$ and a new vertex $s$ that is a neighbor of the two copies of $v_2$. Since $c \not\in\mathcal{S}$, both copies of $v_1$ and both copies of $v_2$ must be colored with distance-1 colors. But since the two copies of $v_1$ are neighbors, they must have distinct distance-1 colors. Therefore, the two copies of $v_2$ must have distinct distance-2 colors as well, so in any $\colo{2}{k}$-coloring of $G$, $s$ must have a distance-2 color. Observe that it is simple to have a $\colo{2}{k}$-coloring of $G - s$. Moreover, since we assumed that $|S_2| < k$, by using the same set of distance-2 colors to color the two copies of $G' - v$, at least one distance-2 color is available to color $s$, and that for any $\colo{2}{k}$-coloring of $G - s$.
        \item If $\mathcal{S} = \{c, [1, \dots, k] \times [1, \dots, k]\}$, then consider the graph from Figure~\ref{fig:gadget_for_2_k_tool_one}, constructed from $k + 2$ copies of $G' - v$ (and renaming $k + 1$ copies of $v_1$ and $v_2$ into $v_1^1, \dots, v_1^{k + 1}$ and $v_2^1, \dots, v_2^{k + 1}$) and then adding the edges $v_2v_1^i$ for every $i$ in $\{1, \dots, k + 1\}$. This graph is $\colo{2}{k}$-colorable since we can use the $\colo{2}{k}$-coloring of the event $(B_{[1, \dots, k] \times [1, \dots, k]})$ to color every copy of $G' - v$. Now, for any $\colo{2}{k}$-coloring of this graph, there must exist $i \in \{1, \dots, k + 1\}$ such that $v_1^i$ is colored with a distance-1 color. But then it must have all the distance-2 colors appearing in its neighborhood inside of its copy of $G' - v$. Therefore,$v_2$ must be colored with a distance-1 color, and $v_1$ as well. Moreover, in every  $\colo{2}{k}$-coloring of this new graph, the neighborhood of $v_1$ and $v_2$ must contain the $k$ distance-$2$ colors. Therefore, we can proceed as in the 
        $\mathcal{S} = \{[1, \dots, k] \times [1, \dots, k]\}$ case, by adding the path $v_1v_1'sv_2'v_2$ with $v_1'$, $s$ and $v_2'$ being new vertices, and $s$ must be colored with a distance-$2$ color.
        \item Finally, $c \in \mathcal{S}$ and there exists $(S_1, S_2) \in \mathcal{S}$ with either $|S_1| < k$ or $|S_2| < k$. Assume without loss of generality that there exists $(S_1, S_2) \in \mathcal{S}$ with $|S_2| < k$ and consider the graph $G$ from Figure~\ref{fig:gadget_for_2_k_tool_two} constructed from $2k + 2$ copies of $G' - v$ (in the $i$-th copy, $v_1$ and $v_2$ have been renamed $v_1^i$ and $v_2^i$) and the addition of a new vertex $s$ and the edges $sv_2^i$ for every $i \in \{1, \dots, 2k + 2\}$. Finally, for every $i \in \{1, \dots, k + 1\}$, we add an edge between $v_1^{2i- 1}$ and $v_1^{2i}$. $G$ is $\colo{2}{k}$-colorable by copying the $\colo{2}{k}$-coloring of the case $(B_{S_1, S_2})$ for every copy of $G' - v$ except that for every $i \in \{1, \dots, k + 1\}$, we have inverted the role of the two distance-1 colors in the $\colo{2}{k}$-coloring of the $(2i - 1)$-th copy of $G - v$ and in the $\colo{2}{k}$-coloring of the $(2i)$-th copy of $G - v$. Then, since we assumed that $|S_2| < k$, there must be an available distance-2 color for $s$. Now, for any $\colo{2}{k}$-coloring of $G$, there must exists $i \in \{1, \dots, k + 1\}$ such that both $v_2^{2i - 1}$ and $v_2^{2i}$ have a distance-1 color, in which case both $v_1^{2i - 1}$ and $v_1^{2i}$ have a distance-1 color as well, and distinct ones since there is an edge between $v_1^{2i - 1}$ and $v_1^{2i}$ in $G$. Therefore, $v_2^{2i - 1}$ and $v_2^{2i}$ have distinct distance-1 colors and so $s$ must be colored with a distance-2 color.
    \end{itemize}
Notice that in all the previous cases, since $G'$ is of girth at least $g$, then the distance in $G' - v$ between $v_1$ and $v_2$ is at least $g - 2$. Therefore, in each case, the constructed graph $G$ is of girth at least $g$ as well. So for every possible value of $\mathcal{S}$, there exists a graph $G$ with the desired properties.
\end{proof}

\begin{corollary}\label{coro:2k_np_hard_gorth_3456}
For every integers $k \geq 1$, $g \in  \{3, 4, 5, 6\}$, there exists a finite set $\mathcal{S}$ of bipartite planar graphs of girth at least $g$ (the size of $\mathcal{S}$ and the size of the elements of $\mathcal{S}$ only depend on $k$ and $g$) such that the problem of deciding whether the vertex set of a planar graph of girth at least $g$ can be partitioned into a set of vertices inducing a disjoint union of graphs from $\mathcal{S}$ and $k$ 2-independent sets or is not $\colo{2}{k}$-colorable is NP-Complete.
\end{corollary}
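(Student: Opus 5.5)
The corollary should follow from Theorem~\ref{thm:2k_structural_gap} once we exhibit, for each $k \geq 1$ and $g \in \{3,4,5,6\}$, a suitable gadget $H_\mathcal{C}^{k,g}$ and a suitable class $\mathcal{C}$. For the gadget we use the graph $F_3(u;k)$ of Choi and Liu, already invoked in the proof of Proposition~\ref{prop:2_k_npc_tool}. It is planar and of girth $6 \geq g$, and it is $\colo{2}{k}$-colorable. Moreover, in every $\colo{2}{k}$-coloring of it the vertex $u$ receives a distance-2 color. So we take $H_\mathcal{C}^{k,g} = F_3(u;k)$ with $s = u$, for every $g \le 6$.

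Next we fix a concrete $\colo{2}{k}$-coloring of $F_3(u;k)$. Let $D$ be the subgraph induced by the vertices receiving distance-1 colors. Since those vertices are properly 2-colored, $D$ is bipartite. It is also planar, as a subgraph of a planar graph. We decompose $D$ into its connected components $D_1, \dots, D_m$. Their number and sizes are bounded by $|V(F_3(u;k))|$, which depends only on $k$.

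We then set $\mathcal{S} = \{P_2, P_5, P_8, D_1, \dots, D_m\}$, and let $\mathcal{C}$ be the class of all disjoint unions of graphs from $\mathcal{S}$. We check the hypotheses of Theorem~\ref{thm:2k_structural_gap}:
\begin{itemize}
\item $\mathcal{C}$ contains $P_2$, $P_5$ and $P_8$;
\item every graph in $\mathcal{C}$ is planar and bipartite;
\item $\mathcal{C}$ is closed under disjoint union;
\item the chosen coloring witnesses that $V(H_\mathcal{C}^{k,g})$ partitions into a set inducing $D \in \mathcal{C}$ and $k$ 2-independent sets;
\item $s$ is forced to a distance-2 color.
\end{itemize}
The paths $P_2, P_5, P_8$ have girth $\infty$, so every element of $\mathcal{S}$ has girth at least $g$. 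The theorem then yields NP-completeness for planar graphs of girth at least $g$, with $\mathcal{S}$ finite and its size and element sizes depending only on $k$ (hence only on $k$ and $g$).

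The main point needing care is whether the properties of $F_3(u;k)$ claimed in Proposition~\ref{prop:2_k_npc_tool} (planarity, girth $6$, colorability, forcing of $u$) are exactly what Theorem~\ref{thm:2k_structural_gap} requires. If the statement is read as requiring $\mathcal{S}$ to depend on $g$, one may also take smaller explicit gadgets for $g=3$, such as the $K_3$-bouquet from Corollary~\ref{coro:2k_gap_strutrural}. A secondary check is that the theorem's gadgets $\textrm{VAR}^k_x$ and $\textrm{CL}^k_C$ have girth $9 \ge g$, so the whole construction keeps girth at least $g$; this was already noted in that proof.
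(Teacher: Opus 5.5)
Your proposal is correct and is essentially the paper's argument. The paper cites Choi and Liu's girth-$6$ construction and applies Proposition~\ref{prop:2_k_npc_tool} and Theorem~\ref{thm:2k_structural_gap}; for $g\le 6$ that proposition's gadget is exactly $F_3(u;k)$, so you have simply unwound it, and your explicit $\mathcal{S}$ ($P_2,P_5,P_8$ plus the components of the distance-1 part of a fixed coloring of $F_3(u;k)$) spells out what the paper leaves implicit.
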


\begin{proof}
    In Figure $2$ of~\cite{choi2025propersquarecoloringssparse}, a construction of a graph of girth $6$ not $\colo{2}{k}$-colorable for every $k \geq 0$ is provided, so we can apply Proposition~\ref{prop:2_k_npc_tool} and Theorem~\ref{thm:2k_structural_gap} to conclude.
\end{proof}

As mentioned in Theorem~\ref{thm:choi_and_liu}, every planar graph of girth at least $7$ is $\colo{2}{12}$-colorable, every planar graph of girth at least $8$ is $\colo{2}{2}$-colorable, and every planar graph of girth at least $10$ is $\colo{2}{1}$-colorable. Therefore, we can provide a statement similar to the one of Corollary~\ref{coro:2k_np_hard_gorth_3456} with $g = 7$ and some $k \leq 11$ or with $k = 2$ and some $g \in \{8, 9\}$, but the precise values are not determined yet.

\subsection{{\sc $\colo{3}{1}$-coloring} in planar graphs}

\begin{figure}
    \centering
    \includegraphics[width=0.5\linewidth]{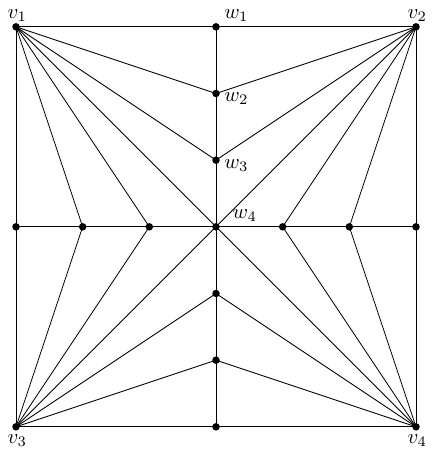}
    \caption{Vertex gadget used to represent a vertex $v$ in the proof of Theorem~\ref{thm:npc_gap_3_1}.}
    \label{fig:gadget_for_3_1_npc.pdf}
\end{figure}

\begin{lemma}\label{lemma:tool_for_npc_gap_3_1}
    Let $G$ be the graph from Figure~\ref{fig:gadget_for_3_1_npc.pdf}. Then $G$ is $\colo{3}{0}$-colorable and in every $\colo{3}{1}$-coloring of $G$, either $v_1$, $v_2$, $v_3$ and $v_4$ have the same distance-1 color or three of them have the same distance-1 color and the forth one has the distance-2 vertex.
\end{lemma}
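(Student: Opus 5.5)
The plan has two parts: exhibit a proper $3$-coloring of the gadget, then analyse an arbitrary $\colo{3}{1}$-coloring according to where the single distance-$2$ color is used. I cannot see Figure~\ref{fig:gadget_for_3_1_npc.pdf}, so I assume the gadget is built the usual way for $3$-coloring reductions. In that reading it consists of triangles glued along edges (diamonds, i.e.\ $K_4$ minus an edge) and chained between the terminals $v_1, \dots, v_4$. In any proper $3$-coloring, the two tips of a diamond receive the same color, so $3$-colorings propagate equality of colors along such chains.

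For $\colo{3}{0}$-colorability, I fix a color for the terminals and propagate it through the diamonds, coloring the two middle vertices of each diamond with the other two colors. I then check that the chains close up consistently, which is a routine verification on the figure.

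For the main claim, I take any $\colo{3}{1}$-coloring and let $D$ be the set of vertices receiving the distance-$2$ color. The set $D$ is $2$-independent. If $D = \emptyset$, the coloring is a proper $3$-coloring, so the diamond chains force $v_1, \dots, v_4$ to share a distance-$1$ color, which is the first alternative. If $D \neq \emptyset$, the key point is that every triangle contains at most one vertex of $D$. Moreover, two vertices of $D$ cannot lie in adjacent or overlapping diamonds, since they would be at distance at most $2$. Removing $D$ leaves a proper $3$-coloring of $G - D$. I would then show that each $w \in D$ only \emph{breaks} the chain locally at $w$. If $w$ is a middle vertex of a diamond, the other middle vertex together with the two tips still forms a path whose coloring no longer forces equality of the tips. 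I then need to check on the figure that the gadget is redundant enough that the equality of terminals is still forced around any single break. This would hold, for example, if the terminals are joined by two internally disjoint chains, or if the chains form a cycle through all $v_i$.

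The main obstacle is this last step: showing that a $2$-independent set $D$ can destroy equality for at most one terminal, and only by being that terminal itself. My plan is to use the fact that the gadget has small diameter around its central part. Then $|D \setminus \{v_1,\dots,v_4\}| \le 1$, and any vertex of $D$ in the central part lies within distance $2$ of the place where a second break would be needed. So one break in the central part leaves an alternative chain that still forces all four terminals to be equal. If instead $w = v_i$, the remaining three terminals are linked through the intact central part and must share one distance-$1$ color, which is the second alternative. The case analysis will follow the few vertex orbits of the gadget, one case per type of position for $w$: terminal, diamond tip, or diamond middle vertex.
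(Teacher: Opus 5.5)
There is a genuine gap: the key step is never proved. You defer it twice (``I then need to check on the figure that the gadget is redundant enough\ldots'', ``the main obstacle is this last step''). The route you sketch for it is to bound $|D\setminus\{v_1,\dots,v_4\}|\le 1$ by a small-diameter argument. That is a global statement the gadget does not give you, since vertices near two opposite sides of the square need not be within distance $2$ of each other. It is also not needed.

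Your model of the gadget as diamonds \emph{chained} between terminals points the wrong way. In a chain, a single middle vertex with the distance-2 color kills the forcing. Adding a second internally disjoint chain does not help unless all breakable vertices are pairwise within distance $2$.

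The missing idea is a local count on each side of the square. As the paper's proof shows, two consecutive corners, say $v_1$ and $v_2$, are joined by a path $w_1w_2w_3w_4$ whose vertices are all adjacent to both $v_1$ and $v_2$. So there are three overlapping diamonds with the \emph{same} tips $v_1,v_2$: parallel, not in series. Your observation that $D$ cannot meet two overlapping diamonds is exactly what is used here.

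\begin{enumerate}
\item Since the $w_j$ share the neighbor $v_1$, at most one of them gets the distance-2 color. Hence one of the edges $w_1w_2$, $w_2w_3$, $w_3w_4$ has both ends distance-1 colored.
\item If $v_1,v_2$ had distinct distance-1 colors, both ends of that edge would be forced to the third color, giving a monochromatic edge. So any two consecutive corners that are both distance-1 colored agree.
\item The four corners are pairwise within distance $2$, so at most one of them gets the distance-2 color.
\item Suppose two opposite corners $v_1,v_3$ had distinct distance-1 colors. By the previous point, one of $v_2,v_4$ is distance-1 colored, and its color differs from that of $v_1$ or of $v_3$. This contradicts the consecutive case.
\end{enumerate}

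These facts give exactly the two alternatives of the lemma. There is no need to control $D$ globally or to split into cases by vertex orbits.
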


\begin{proof}
    Observe that among $v_1$, $v_2$, $v_3$ and $v_4$, at most one vertex can have the distance-2 color. Assume that two consecutive corners of $G$ are colored with different distance-1 colors, then we can assume without loss of generality that $v_1$ and $v_2$ have different distance-1 colors. Then at least one of $w_1w_2$, $w_2w_3$ or $w_3w_4$ is such that both endpoints have distance-1 colors, in which case there is a monochromatic edge, so this case cannot happen in a $\colo{3}{k}$-coloring of the graph from Figure~\ref{fig:gadget_for_3_1_npc.pdf}. The remaining case is if two opposite corners of $G$ have distinct distance-1 colors, in which case we can assume that $v_1$ and $v_3$ have distinct distance-1 colors. Then at most one of $v_2$ and $v_4$ is colored with the distance-$2$ color, and therefore at least one of $v_2$ or $v_4$ is colored with a distance-$1$ color distinct from the distance-1 color of $v_1$ or $v_3$ thus we must be in the previous case.
\end{proof}

\begin{theorem}\label{thm:npc_gap_3_1}
Given a planar graph $G$ that is either 3-colorable or not $\colo{3}{1}$-colorable,
it is NP-Complete to decide whether $G$ is 3-colorable.
\end{theorem}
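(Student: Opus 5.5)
We reduce from {\sc $3$-coloring} restricted to planar graphs of maximum degree $4$, which is NP-Complete by Theorem~\ref{thm:NPC-problems}. Let $G$ be such a graph. We build $G'$ by replacing every vertex $v \in V(G)$ with a copy of the gadget of Figure~\ref{fig:gadget_for_3_1_npc.pdf}, whose four corners $v_1, v_2, v_3, v_4$ serve as ports. For every edge $uv \in E(G)$ we pick one free corner $u_i$ of the gadget of $u$ and one free corner $v_j$ of the gadget of $v$, and join them by an edge. Since $\Delta(G) \leq 4$, each gadget has enough corners. We choose the corners following a planar embedding of $G$, using the cyclic order of edges around each vertex, so that $G'$ stays planar. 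If a single edge per pair of ports turns out not to force enough, we will instead use a small rigid connector, for instance a triangle on $u_i$, $v_j$ and a new vertex, or two parallel connections via distinct corners.

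For completeness, suppose $G$ is $3$-colored. Lemma~\ref{lemma:tool_for_npc_gap_3_1} gives a $\colo{3}{0}$-coloring of each gadget, and we choose it so that all four corners receive the color of $v$. We will check from the figure that a coloring with monochromatic corners exists for each of the three color choices; permuting colors makes this automatic. Then every edge $u_iv_j$ joins two distinct colors, so $G'$ is $3$-colorable.

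For soundness, suppose $G'$ has a $\colo{3}{1}$-coloring. By Lemma~\ref{lemma:tool_for_npc_gap_3_1}, in each gadget at least three corners share a common distance-1 color $c(v)$, and at most one corner carries the distance-2 color. We set the color of $v$ in $G$ to be $c(v)$. For an edge $uv$, if both used corners have distance-1 colors, the edge forces $c(u) \neq c(v)$. The main obstacle is the exceptional corner carrying the distance-2 color, which would break this implication. We handle it globally. The distance-2 color is a $2$-independent set, so in $G'$ it cannot appear on two vertices at distance at most $2$. If the distance-2 color sits on a port $u_i$, then the adjacent $v_j$ is at distance $1$ from it, and every other corner of the gadget of $u$ is within distance $2$ of $u_i$ only if the gadget has diameter $2$ around its corners. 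Hence the safest fix is to make each connection redundant: connect $u$ and $v$ through two disjoint corner pairs. At most one corner per gadget is exceptional, so if each gadget has at most four incident port edges we double only as far as degree allows. Otherwise we attach to each corner a pendant triangle whose other vertices force that corner to be distance-1. If the redundancy argument fails, we will instead argue that an exceptional corner can be recolored $c(v)$, which is possible because its neighbors in the gadget avoid $c(v)$ by the lemma's structure; it then remains to check the outside neighbor.

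With either fix, every edge of $G$ is witnessed by a pair of distance-1 corners of distinct colors $c(u) \neq c(v)$, so $c$ is a proper $3$-coloring of $G$. Thus $G$ is $3$-colorable implies $G'$ is $3$-colorable, and $G'$ being $\colo{3}{1}$-colorable implies $G$ is $3$-colorable. This yields the promise (gap) statement of Theorem~\ref{thm:npc_gap_3_1}.
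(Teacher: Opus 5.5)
Your completeness direction is fine, but the soundness direction has a genuine gap. With one edge $u_iv_j$ per edge of $G$ the reduction is unsound, and none of your fallback fixes repairs it.

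Here is a counterexample. Take $G=K_4$ with vertices $a,b,c,d$ colored $1,1,2,3$. $3$-color every gadget with monochromatic corners. Then give the unique distance-2 color to the port $a_i$ used for the edge $ab$. A color used once is trivially $2$-independent, so $G'$ is $\colo{3}{1}$-colorable although $G$ is not $3$-colorable. Your fixes fail as follows:
\begin{itemize}
\item \emph{Triangle connector.} The same coloring defeats it: keep $b_j$ at color $1$ and give the new vertex any other distance-1 color.
\item \emph{Two parallel edges} $a_ib_j$, $a_{i'}b_{j'}$. Give the distance-2 color to both $a_i$ and $b_{j'}$. No two corners of a gadget are adjacent, since they are monochromatic in the $3$-coloring of Lemma~\ref{lemma:tool_for_npc_gap_3_1}. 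Hence $\dist_{G'}(a_i,b_{j'})\ge 3$ and both may carry that color. Moreover, a degree-$4$ vertex has only four corners, so you could not double every edge anyway.
\item \emph{Pendant triangle.} It does not force its attachment vertex to take a distance-1 color. That vertex can take the distance-2 color while the other two take distinct distance-1 colors.
\item \emph{Recoloring the exceptional corner with $c(v)$.} This fails exactly when its outside neighbour has color $c(u)=c(v)$, which is the situation you must rule out. The argument is circular.
\end{itemize}

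The missing idea is a connector that survives every admissible placement of the distance-2 color. The paper uses a side of each gadget rather than a single corner. For $uv\in E(G)$ it picks consecutive corners $v_i,v_{i+1}$ and $u_j,u_{j+1}$, one side of each gadget per incident edge; this is where $\Delta(G)\le 4$ and planarity enter. It then adds the three edges $v_iu_{j+1}$, $v_{i+1}u_j$, $v_iu_j$.

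At most one corner per gadget carries the distance-2 color, and two adjacent vertices cannot both carry it. So among these four vertices, that color occupies either at most one vertex or exactly the pair $\{v_{i+1},u_{j+1}\}$. In every case one of the three edges still joins two distance-1 corners. By Lemma~\ref{lemma:tool_for_npc_gap_3_1} these carry $c(v)$ and $c(u)$, hence $c(u)\neq c(v)$. With this connector, your definition of $c$ and your deduction of the promise statement go through unchanged.
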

\begin{proof}
We reduce {\sc 3-coloring}, which is known to be NP-Complete on planar graphs with maximum degree 4 by Theorem~\ref{thm:NPC-problems}. Let $G$ be a planar graph of maximum degree $4$. We construct from $G$ the graph $G'$ with, for every vertex $v \in V(G)$, a copy of the graph from Figure~\ref{fig:gadget_for_3_1_npc.pdf} whose corner vertices are labeled $v_1$, $v_2$, $v_3$ and $v_4$, and, for every edge $uv \in E(G)$, we chose one $v_iv_{i + 1}$ face from the vertex gadget of $v$, one $u_ju_{j + 1}$ face from the vertex gadget of $u$, and we add the edges $v_iu_{j + 1}$, $v_{i + 1}u_j$ and $v_iu_j$ to $G'$. Observe that since $G$ is planar, for each step of the procedure, it is possible to choose $i$ and $j$ such that $G'$ is planar as well.
\begin{itemize}
    \item If $G$ is $3$-colorable, then for each vertex $v \in V(G)$, we color $v_1$, $v_2$, $v_3$ and $v_4$ with the color of $v$. The remaining graph can be uniquely colored with only distance-1 colors. 
    \item On the other hand, if $G'$ is $\colo{3}{1}$-colorable, then by Lemma~\ref{lemma:tool_for_npc_gap_3_1}, for every vertex $v \in V(G)$, at most one of $v_1$, $v_2$, $v_3$ and $v_4$ has the distance-2 color and the other ones have the same distance-1 color. Moreover, for every edge $uv \in E(G)$, let $i$, $j$ be such that $v_iu_{j + 1}$, $v_{i + 1}u_j$ and $v_iu_j$ are edges in $G'$. Then at most one of $v_i$, $v_{i + 1}$, $u_j$ and $u_{j + 1}$ is colored with the distance-2 color. Therefore, the distance-1 color used to color the corner vertices of the vertex gadget of $v$ must be different than the distance-1 color used to color the corner vertices of the vertex gadget of $u$. Thus, copying these colors leads to a proper $3$-coloring of $G$. 
\end{itemize}
Finally, if $G'$ is $\colo{3}{1}$-colorable, then one can observe that every vertex of $G'$ cannot see $3$ different distance-1 colors in its neighborhood, so every vertex colored with a distance-2 color can be recolored with a distance-1 color, so $G'$ is $\colo{3}{1}$-colorable if and only if it is $3$-colorable.
\end{proof}

\subsection{{\sc$\colo{3}{k}$-coloring} in planar graphs}

\begin{figure}
    \centering
    \includegraphics[width=0.5\linewidth]{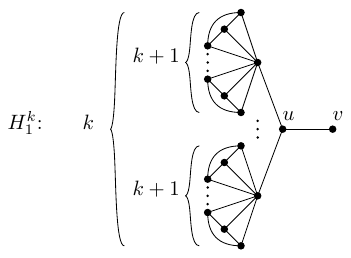}
    \caption{The gadgets used in the proof of Theorem~\ref{thm:npc_3_k}. In $H_1^k$ both $u$ and $v$ must be colored with distance-1 colors.}
    \label{fig:gadgets_for_3_k_npc}
\end{figure}

\begin{theorem}\label{thm:npc_3_k}
    For every integer $k\geq 1$, {\sc$\colo{3}{k}$-coloring} is NP-Complete even when restricted to planar graphs of maximum degree $3k + 4$.
\end{theorem}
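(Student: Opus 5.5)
We reduce from {\sc 3-coloring} on planar graphs of maximum degree $4$ (Theorem~\ref{thm:NPC-problems}). The plan is to build, from a planar graph $G$ of maximum degree $4$, a planar graph $G'$ in which every vertex of $G$ is forced to receive a distance-1 color, while the rest of $G'$ can always absorb the $k$ distance-2 colors. Then a $\colo{3}{k}$-coloring of $G'$, restricted to $V(G)$, is a proper $3$-coloring of $G$. Conversely, a proper $3$-coloring of $G$ should extend to $G'$.

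\textbf{The forcing gadget $H_1^k$.} This is the key tool suggested by Figure~\ref{fig:gadgets_for_3_k_npc}. It is a planar graph with two vertices $u,v$ such that in every $\colo{3}{k}$-coloring both $u$ and $v$ get distance-1 colors. Moreover, any pair of distance-1 colors (equal or distinct, as needed) on $u,v$ should extend to the whole gadget.

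The natural construction is the one used for the $\colo{1}{k}$ lemma. Take a vertex $u$ adjacent to $k+1$ vertices that pairwise have a common neighbor. Alternatively, take $k+1$ vertices lying pairwise at distance at most $2$ (for instance the leaves of a star, plus triangles), each of which is itself forced to carry a distance-2 color whenever $u$ does not. Then if $u$ carries a distance-2 color, we obtain $k+1$ vertices at pairwise distance at most $2$ all needing distinct distance-2 colors, which is a contradiction.

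To force vertices to avoid distance-1 colors, use $K_4$-like pieces: a $K_4$ needs at least one distance-2 vertex, and $k+1$ copies of $K_4$ glued at a common vertex (like the $K_3$ construction in Corollary~\ref{coro:2k_gap_strutrural}) form a graph of diameter $2$. In that graph the common vertex cannot be distance-2, since then every other $K_4$ would still need its own distance-2 vertex. I would aim for a gadget in which the attachment vertex has degree about $3(k+1)$, so that each vertex of $G'$ has degree at most $4 + 3k$. This accounts for the bound $3k+4$: the $4$ comes from the edges of $G$ and the $3k$ from forcing structure hung at the vertex.

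\textbf{The construction.} For each $v\in V(G)$, attach to $v$ a copy of the forcing gadget, say $k$ triangles or $K_4$'s sharing $v$. Check the count: if $v$ is distance-2, each of the attached pieces still needs a distance-2 vertex. All these vertices are at distance $\le 2$ from each other via $v$, together with $v$ itself, giving $k+1$ distance-2 vertices. This forces $v$ to be distance-1. Keep the edges of $G$ unchanged. Planarity is preserved because the gadgets are planar and are attached at a single vertex inside a face.

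\textbf{The two directions.} Given a $3$-coloring of $G$, color $V(G)$ with the distance-1 colors. In each attached piece, use a distinct distance-2 color for its one needed vertex, and complete it with the distance-1 colors. This works because gadgets at different vertices are at distance at least $3$ from one another's distance-2 vertices, provided I place the distance-2 vertices away from $v$ by subdividing (this must be verified). Conversely, forcing makes $V(G)$ distance-1 colored, so we obtain a proper $3$-coloring of $G$.

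\textbf{Main obstacle.} The hardest step is designing the gadget precisely so that both properties hold at once. First, it must genuinely force a distance-1 color: with only $k$ attached pieces, one needs $v$ plus the $k$ forced vertices to be pairwise within distance $2$. Second, it must still extend from every distance-1 color at $v$ without distance-2 conflicts across neighboring vertices of $G$. I would first try $k$ copies of $K_4$ each sharing one vertex with $v$, which contributes $3k$ to the degree of $v$, and verify the extension and non-interference by hand.
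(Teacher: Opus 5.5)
There is a genuine gap. Your overall reduction is the paper's: start from planar 3-coloring with $\Delta\le 4$, keep the edges of $G$, and hang at each vertex a gadget that forces a distance-1 color. The problem is that the gadget you propose forces the opposite of what you need.

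If $v$ is the common vertex of several copies of $K_4$ (or of triangles), then giving $v$ a distance-2 color satisfies all of them at once. Each $K_4-v$ is a triangle and each $K_3-v$ is an edge, and both are colorable with the three distance-1 colors. So your claim ``if $v$ is distance-2, each of the attached pieces still needs a distance-2 vertex'' is false. So is the claim that the common vertex of $k+1$ glued copies of $K_4$ cannot be distance-2.

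The truth is the reverse. If the common vertex $c$ has a distance-1 color, each of the $k+1$ triangles in $N(c)$ needs a distance-2 vertex. These vertices are pairwise at distance $2$ through $c$, and $k$ colors do not suffice. Hence such a bouquet forces $c$ to be distance-2. With only $k$ copies at $v$, as in your degree-$3k$ plan, nothing is forced at all. Subdividing to separate the distance-2 vertices would also destroy the $K_4$ obstruction. Your first sketch has the same defect: vertices forced to be distance-2 only ``whenever $u$ does not'' give no information in the case where $u$ is distance-2.

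The missing idea is to force $v$ indirectly. You use up all $k$ distance-2 colors within distance $2$ of $v$ with vertices that are forced to be distance-2 unconditionally. This is what the paper's gadget $H_1^k$ does. There, $v$ has a neighbor $u$ whose $k$ other neighbors $x_1,\dots,x_k$ must receive distance-2 colors. They are pairwise at distance $2$ through $u$, so they use all $k$ distance-2 colors. Both $u$ and $v$ are within distance $2$ of every $x_i$, so both must take distance-1 colors.

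One way to force the $x_i$, consistent with the bound $3k+4$, is your bouquet used the right way round. Make each $x_i$ the common vertex of $k+1$ copies of $K_4$. Then $\deg(x_i)=3(k+1)+1=3k+4$, while vertices of $G$ have degree at most $5$.

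A $3$-coloring of $G$ then extends as follows:
give $u$ a distance-1 color different from that of $v$;
give $x_i$ the distance-2 color $i$;
give each triangle the three distance-1 colors.
The $x_i$ of different gadgets are at distance at least $5$, so no conflicts arise. The converse direction then follows exactly as you describe.
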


\begin{proof}
    We reduce {\sc $3$-coloring}, which is known to be NP-Complete even when restricted to planar graphs of maximum degree $4$ by Theorem~\ref{thm:NPC-problems}.
    Consider the graph $H_1^k$ from Figure~\ref{fig:gadgets_for_3_k_npc}, then in any $\colo{3}{k}$-coloring, both $u$ and $v$ must be colored with a distance-1 color. Indeed, one can observe that any vertex in $N(u) - v$ must be colored with a distance-2 color and $|N(u) - v| = k$, so no distance-2 color is available for $u$ and $v$. 
    Let $G$ be a planar graph of maximum degree $4$. We construct the graph $G'$ starting from a copy of $G$ and, for every vertex $w \in V(G)$, we add a copy of $H_1^k$, and we identify $w$ and $v$. Notice that $G'$ is a planar graph of maximum degree $3k + 4$.
    \begin{itemize}
        \item If $G$ is $3$-colorable, then for every vertex $v \in V(G)$, we color $v$ in $G'$ with the same distance-1 color. By the previous observations, this pre-coloring of $G'$ can be extended into a $\colo{3}{k}$-coloring.
        \item  If $G'$ is $\colo{3}{k}$-colorable, then by the previous observations, every vertex in $G'$ corresponding to a vertex in $G$ was colored with a distance-1 color. Therefore, the $\colo{3}{k}$-coloring of $G'$ induces a $3$-coloring of $G$.
    \end{itemize}
    So $G$ is $3$-colorable if and only if $G'$ is $\colo{3}{k}$-colorable, hence {\sc $\colo{3}{k}$-coloring} is NP-Complete even when restricted to planar graphs of maximum degree $3k + 4$.
\end{proof}

Notice that here, we do not need to consider planar graphs of large girth due to the following celebrated Grötszch's Theorem.
\begin{theorem}[Grötszch \cite{grunbaum1963grotzsch, thomassen2003short}, 1959]\label{thm:grotzcsh_thm}
    Every triangle-free planar graph is $3$-colorable.
\end{theorem}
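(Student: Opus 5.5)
The statement to prove is Grötzsch's Theorem. It is a classical result that the paper cites rather than reproves, so the natural plan is to follow Thomassen's short proof~\cite{thomassen2003short}. The key is a stronger, inductive statement about precolored boundaries.

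\textbf{Strengthened claim.} Let $G$ be a plane graph of girth at least $5$ with outer face bounded by a cycle $C$ of length at most $8$. Then every proper $3$-coloring of $C$ extends to a proper $3$-coloring of $G$, with a small technical exception for $|C|=8$. Thomassen in fact uses a list version: the vertices of a short path on the outer face are precolored and every other vertex receives a list of size $3$.

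\textbf{From the claim to the theorem.} First we reduce to girth $5$. If a triangle-free planar graph contains a $4$-cycle $v_1v_2v_3v_4$, we identify two opposite vertices $v_1$ and $v_3$ lying on a common $4$-face. We choose the pair so that no triangle is created, which is always possible in a minimal counterexample. The result is a smaller triangle-free planar graph, so by induction it is $3$-colorable, and any such coloring lifts back to the original graph. Once $4$-cycles are removed, we apply the strengthened claim with an arbitrary coloring of the outer cycle; the outer face has length at least $5$, and the precolored-path formulation handles long outer faces.

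\textbf{Proof of the claim.} We take a minimal counterexample and derive properties in order.
\begin{enumerate}
\item $G$ is $2$-connected.
\item $G$ has no chord of $C$ and no separating cycle of length at most $8$. Otherwise we split along it, color the outside first, then the inside, using minimality.
\item Every inner vertex has degree at least $3$, since a degree-$2$ vertex can always be colored last.
\end{enumerate}
We then find a suitable path $P$ of length $2$ or $3$ in the interior with both ends on $C$, or a vertex adjacent to $C$. Deleting it or coloring it appropriately produces a smaller instance whose outer cycle is again short.

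\textbf{Main obstacle.} The delicate part is the final reducible configuration. We must check that after removing the chosen vertices the new outer cycle still has length at most $8$ and no new short separating cycle appears. This is where the girth-$5$ condition and Euler's formula, via an average face-length counting argument, show that such a configuration exists. This step needs careful case analysis, so in the paper we would simply invoke the literature~\cite{grunbaum1963grotzsch, thomassen2003short} rather than reproduce it.
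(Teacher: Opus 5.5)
The paper gives no proof of this statement. It is quoted from Gr\"otzsch via Gr\"unbaum and Thomassen, so your decision to invoke those references is exactly what the paper does. Read as an outline of an actual proof, however, your sketch has a genuine gap in the reduction from triangle-free to girth $5$.

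Identifying opposite vertices is only legitimate for a $4$-cycle that bounds a face. For a separating $4$-cycle, $v_1$ and $v_3$ need not share a face, and planarity is lost. So once all $4$-faces are eliminated, a minimal counterexample may still contain separating $4$-cycles. For instance, a $4$-cycle $v_1v_2v_3v_4$ with a path $v_1xyv_3$ inside and a path $v_2zwv_4$ outside is triangle-free, has only $5$-faces, and has girth $4$. Such a counterexample need not have girth $5$, and your girth-$5$ claim does not apply.

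Plain minimality does not remove separating $4$-cycles either. The inside and the outside are each $3$-colorable by induction, but nothing forces the two colorings to induce the same pattern on the $4$-cycle. Up to renaming colors there are three patterns: both diagonals monochromatic, or exactly one of them.

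The missing idea is a stronger inductive statement, and either of two fixes works:
\begin{itemize}
\item Prove Gr\"otzsch's extension form: every $3$-coloring of an outer cycle of length $4$ or $5$ extends. A separating cycle of length at most $5$ is then handled by coloring the outside first and extending inside. The identification on a $4$-face must then also be chosen so the precolored outer cycle stays properly colored.
\item Having removed all $4$-faces, take an innermost separating $4$-cycle $C$. Color $C$ with its exterior by minimality. Since the interior has girth $5$ apart from $C$, delete one edge of $C$, precolor the resulting $4$-vertex path, and extend with the precolored-path girth-$5$ theorem.
\end{itemize}
Incidentally, a triangle-free identification on a $4$-face exists by planarity alone, not just in a minimal counterexample. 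If both choices created triangles, there would be paths $v_1abv_3$ and $v_2cdv_4$ outside the face. These are vertex-disjoint, since any common vertex yields a triangle, which is impossible in the plane.

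There are also two smaller inaccuracies in your description of Thomassen's machinery. First, in the list version, vertices of the outer face not on the precolored path may have lists of size only $2$. These must form an independent set, with a condition on their adjacency to the path. This relaxation is what makes the induction close: after coloring and deleting an outer vertex, its interior neighbors land on the new outer face with one color fewer. Saying that every other vertex gets a $3$-list misses the key point.

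Second, for girth $5$ there is no exception at $|C|=8$. An interior vertex with three neighbors on $C$ splits the disk into three cycles of lengths $\ell_i+2$ with $\ell_1+\ell_2+\ell_3=|C|$, and girth $5$ forces $|C|\ge 9$. Thomassen's theorem says every $3$-coloring of an outer cycle of length at most $8$ extends, with the exceptional configuration first appearing at length $9$ (Walls; Thomassen). This is also what lets your splitting along separating cycles of length at most $8$ go through cleanly.
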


\section{Extremal values for the required number of distance-2 colors}\label{sec:extr_values}

In this section, we are interested, for a fixed value of $a$, in the maximum value of $b$ required to $\colo{a}{b}$-color $(a + 1)$-colorable graphs of order $n$ with some structural constraints. More precisely, let $\mathcal{C}$ be any class of graphs. We denote by  $\ex{k}{\mathcal{C}}{}$ the function that associates to every $n \in \mathbb{N}$ the smallest integer $c$ such that all the graphs of order $n$ in $\mathcal{C}$ are $\colo{k}{c}$-colorable and we therefore are interested in determining the value of $\ex{a}{\mathcal{C}}{(n)}$ when $\mathcal{C}$ is a $(a + 1)$-colorable class of graphs. For instance, Theorem~\ref{thm:choi_and_liu} states that if $\mathcal{P}_g$ is the class of planar graphs of girth $g$ then for every $n \in \mathbb{N}$, $\ex{2}{\mathcal{P}_7}{(n)} \leq 12$, $\ex{2}{\mathcal{P}_8}{(n)} \leq 2$ and $\ex{2}{\mathcal{P}_{10}}{(n)} = 1$. 

\subsection{Extremal values for $k$-degenerate graphs}

One of the simplest classes of $(k + 1)$-colorable graphs is the class of $k$-degenerate graphs. In this section, we will see that every $k$-degenerate graph is $\colo{k}{O(\sqrt{n})}$-colorable. Most of the upper-bounds on the required number of distance-2 colors are obtained by following the idea of the next remark.

\begin{remark}\label{rem:colo_strat}
    Let $G$ be a graph and let $S \subseteq V(G)$. If $\chi_2(G - S) \le c$ and $G[N_G[S]]$ is $\colo{a}{b}$-colorable such that no two vertices of $N_G(S)$ have the same distance-2 color, then $G$ is $\colo{a}{b + c}$-colorable.
\end{remark}

Indeed, one can first $2$-distance $c$-color $G - S$ and then $\colo{a}{b}$-color $G[N_G[S]]$ such that no two vertices of $N_G(S)$ have the same distance-2 color. Doing so,  
since the $2$-distance $c$-coloring of $G - S$ is valid, it remains valid in $G - N_G[S]$. Moreover, each distance-$2$ color used in $N_G(S)$ appears once, so no conflict is created, and therefore the coloring is valid.

\begin{lemma}\label{lemma:k-degenerate-dominated-colo-bound}
     Let $G$ be a $k$-degenerate graph with a dominating set $S$ of size $N$. Then there exists a set $T$ containing $S$ and of size at most $(k + 1)\times N$ such that $G - T$ is $k$-colorable.
\end{lemma}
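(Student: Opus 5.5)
The plan is to use an acyclic orientation that witnesses $k$-degeneracy, and to add to $S$ all out-neighbours of $S$. The key point is that every remaining vertex then loses one of its out-neighbours, so the remainder is $(k-1)$-degenerate.

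\textbf{Step 1 (orientation).} Since $G$ is $k$-degenerate, there is an ordering $v_1, \dots, v_n$ of $V(G)$ in which every vertex has at most $k$ neighbours appearing earlier. Orient every edge from the later endpoint to the earlier one. This gives an acyclic orientation $\vec G$ in which every vertex has out-degree at most $k$. Write $N^+(x)$ for the out-neighbourhood of $x$.

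\textbf{Step 2 (the set $T$).} Define
\[
T = S \cup \bigcup_{s \in S} N^+(s).
\]
Then $T \supseteq S$ and $|T| \le N + kN = (k+1)N$, which is the required bound.

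\textbf{Step 3 (domination kills one out-edge).} Let $v \in V(G) \setminus T$. Since $S$ is dominating and $v \notin S$, the vertex $v$ has a neighbour $s \in S$. The edge $vs$ is oriented either $s \to v$ or $v \to s$. If it were $s \to v$, then $v \in N^+(s) \subseteq T$, a contradiction. Hence $v \to s$, so $s$ is an out-neighbour of $v$, and $s \in T$ is deleted.

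Consequently, in the restriction of the orientation to $G - T$, every vertex has out-degree at most $k-1$. Since the restricted orientation is still acyclic, ordering $G - T$ by the original order shows that every vertex has at most $k-1$ earlier neighbours. Thus $G - T$ is $(k-1)$-degenerate.

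\textbf{Step 4 (colouring).} Greedily colour $G - T$ along this ordering. Each vertex sees at most $k-1$ already-coloured neighbours, so $k$ colours suffice, and $G - T$ is $k$-colourable. This is a proper colouring, matching the "$k$ distance-1 colours" use of the lemma later in the paper.

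The only non-routine point is Step 3: choosing $T$ as $S$ together with the out-neighbourhoods of $S$ (rather than in-neighbourhoods) is exactly what makes each dominating edge point out of the surviving vertex. Everything else is standard degeneracy bookkeeping.
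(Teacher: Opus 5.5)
Your proof is correct and is essentially the paper's argument. The paper also follows a degeneracy ordering and bounds $|T \setminus S|$ by noting that each added vertex is an earlier neighbour of some $s \in S$, with at most $k$ such vertices per $s$. It colours every other vertex greedily because one of its at most $k$ earlier neighbours already lies in $T$. The only difference is that the paper builds $T$ adaptively during the greedy colouring: it adds a vertex only if no earlier neighbour is already in $T$, so its $T$ is a subset of your $S \cup \bigcup_{s\in S} N^+(s)$. You instead fix $T$ up front and phrase the conclusion as $(k-1)$-degeneracy of $G - T$, which is slightly cleaner.
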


 \begin{proof}
    We will $k$-color $G - T$ and construct $T$ at the same time, following a $k$-degeneracy ordering on $V(G)$. So let $v_1, \dots, v_n$ be an ordering of the vertices of $G$ and let $G_i = G[\{v_1, \dots, v_i\}]$ be such that for all $i \geq 1$, $v_i$ has degree at most $k$ in $G_i$. If $i = 1$, then if $v_1 \in S$ then we add $v_1$ to $T$. Otherwise, every color is available for $v_1$, so $G_1$ is $k$-colorable. If $i \geq 2$, then if $v_i \in S$ then we add it to~$T$. Otherwise, if among the at most $k$ neighbors of $v_i$ in $G_i$ there exists a vertex in $T$, then at most $k - 1$ neighbors of $v_i$ are colored, thus at least one color is available for $v_i$, and we color $v_i$ with this color. Finally, if no neighbors of $v_i$ in $G_i$ is in $T$, then we add $v_i$ to $T$. At the end of the procedure, $S \subseteq T$. Moreover, $|T - S| \leq k\times N$. Indeed, since $S$ is a dominating set of $G$, every vertex $v_i$ of $T - S$ has been added to $T$ because for every $v_j \in S$ dominating $v_i$, we have $i < j$ in which case we say that $v_j$ is \emph{late} for $v_i$. Since $G$ is $k$-degenerate, every vertex of $S$ can be late for at most $k$ vertices therefore $|T - S| \leq k\times|S| = k\times N$, so $|T| \leq (k + 1)\times N$ as desired.
\end{proof}

If a $k$-degenerate graph $G$ has a dominating set of size $N$, then we can construct $T$ from Lemma \ref{lemma:k-degenerate-dominated-colo-bound}, color each vertex in $T$ with a unique distance-2 color, and then $k$-color $G - T$. This leads to the following Corollary of Lemma \ref{lemma:k-degenerate-dominated-colo-bound}.

\begin{corollary}\label{coro:k-degenerate-dominated-colo-bound}
    For every $k$-degenerate graph $G$ of order $n$ with a dominating set $S$ of size $N$, $G$ is $\colo{k}{(k + 1)\times N}$-colorable. Moreover, it is possible to use every distance-2 color at most once each.
\end{corollary}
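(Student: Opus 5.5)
The corollary follows almost directly from Lemma~\ref{lemma:k-degenerate-dominated-colo-bound}, so the plan is to make the two-phase coloring sketched just before the statement explicit and check that it is valid. First, apply Lemma~\ref{lemma:k-degenerate-dominated-colo-bound} to $G$ and its dominating set $S$ of size $N$. This gives a set $T \supseteq S$ with $|T| \leq (k+1)\times N$ such that $G - T$ admits a proper $k$-coloring $c$. Use the $k$ colors of $c$ as the $k$ distance-1 colors.

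Next, assign to the vertices of $T$ pairwise distinct distance-2 colors, injectively, from a palette of $(k+1)\times N$ colors. Since $|T| \leq (k+1)N$, this is possible, and each distance-2 color is used at most once. This already gives the "moreover" part of the statement.

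It then remains to verify validity. Take two vertices with the same distance-1 color. Both lie in $G - T$, and $c$ is proper on $G-T$, so they are not adjacent in $G$: any edge of $G$ between two vertices of $G - T$ is an edge of the induced subgraph $G-T$. Take two vertices with a distance-2 color. They necessarily carry different distance-2 colors, so the condition that vertices sharing a distance-2 color are at distance at least $3$ holds vacuously. Hence the coloring is a $\colo{k}{(k+1)\times N}$-coloring of $G$.

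There is no real obstacle here. The only point needing care is that the independence of the color classes of $c$ is checked in $G$ itself and not merely in $G - T$. This holds because $G - T$ is an induced subgraph of $G$, so properness transfers directly. If fewer than $(k+1)N$ distance-2 colors are actually used, the unused ones may be left empty, which is allowed by the definition, since it only requires at most $b$ distance-2 classes.
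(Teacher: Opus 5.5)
Your proposal is correct and is exactly the paper's argument: take $T$ from Lemma~\ref{lemma:k-degenerate-dominated-colo-bound}, give each vertex of $T$ its own distance-2 color, and use the $k$-coloring of $G-T$ as the distance-1 coloring. You also make explicit that the $k$-coloring of the induced subgraph $G-T$ is proper in $G$, which the paper leaves implicit.
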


We further require bounds on the $2$-distance chromatic number of $k$-degenerate graphs. The following Theorem provides an upper-bound linear in the maximum degree.

\begin{theorem}[Krumke, Marathe and Ravi \cite{krumke2001models}, 2001]\label{thm:dist-2_colo_of_k-dege}
    Let $G$ be a $k$-degenerate graph of maximum degree $\Delta$, then $\chi_2(G) \le (2k - 1)\Delta + 1$.
\end{theorem}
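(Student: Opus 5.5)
We would prove the bound $\chi_2(G) \le (2k-1)\Delta + 1$ by a greedy argument. First we fix a good ordering of $V(G)$, then we color the vertices in that order. At each step we count how many already-colored vertices lie within distance $2$ of the current vertex. We need that count to be at most $(2k-1)\Delta$.

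Fix a $k$-degeneracy ordering $v_1,\dots,v_n$, where each $v_i$ has at most $k$ neighbors with smaller index; call these its \emph{back-neighbors}. Its other neighbors, of larger index, are its \emph{forward-neighbors}. We color greedily in the reverse order $v_n, v_{n-1}, \dots, v_1$. When we treat $v_i$, the already-colored vertices are exactly those $v_j$ with $j>i$. We must bound the number of such $v_j$ at distance $1$ or $2$ from $v_i$.

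Every such $v_j$ is either a forward-neighbor of $v_i$, or is reached by a path $v_i w v_j$ with $j>i$. We classify these paths by where the middle vertex $w$ sits.

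\begin{itemize}
\item \textbf{$w$ is a back-neighbor of $v_i$.} There are at most $k$ choices of $w$. Each $w$ has at most $\Delta-1$ neighbors other than $v_i$. This contributes at most $k(\Delta-1)$ vertices.
\item \textbf{$w$ is a forward-neighbor of $v_i$ and $v_j$ is a back-neighbor of $w$.} This means $j$ is smaller than the index of $w$. There are at most $\Delta-k'$ forward-neighbors $w$, where $k'$ is the number of back-neighbors of $v_i$. Each such $w$ has at most $k-1$ back-neighbors other than $v_i$. This contributes at most $(k-1)\Delta$ vertices.
\item \textbf{$w$ is a forward-neighbor of $v_i$ and $v_j$ is a forward-neighbor of $w$.} This case is the obstacle: it can produce up to $\Delta^2$ vertices, so the naive reverse greedy gives a quadratic bound.
\end{itemize}

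To handle the third case we change which ordering we color along. We use the standard trick for coloring squares of degenerate graphs. Orient every edge from its larger index to its smaller index, so each vertex has out-degree at most $k$. Two vertices at distance $2$ through $w$ then fall into three patterns: $w$ is an out-neighbor of both, $w$ is an in-neighbor of both, or $w$ is an out-neighbor of one and an in-neighbor of the other.

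We color greedily in the forward order $v_1,\dots,v_n$. Now a conflict of $v_i$ with an earlier $v_j$ via $w$ is one of the following.

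\begin{itemize}
\item \textbf{$v_j$ is a neighbor of $v_i$.} There are at most $\Delta$ such conflicts.
\item \textbf{$w$ is an out-neighbor of $v_i$ (so $w$ is earlier).} There are at most $k$ choices of $w$, each with at most $\Delta-1$ further neighbors. This gives at most $k(\Delta-1)$ conflicts.
\item \textbf{$w$ is an in-neighbor of $v_i$ (so $w$ is later) and $v_j$ is an out-neighbor of $w$.} Each $w$ has at most $k-1$ out-neighbors besides $v_i$, and there are at most $\Delta$ choices of $w$. This gives at most $(k-1)\Delta$ conflicts.
\item \textbf{$w$ is an in-neighbor of $v_i$ and $v_j$ is an in-neighbor of $w$.} Then $j>\operatorname{index}(w)>i$, which is impossible since $v_j$ is earlier than $v_i$.
\end{itemize}

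Adding these up gives at most $\Delta + k(\Delta-1) + (k-1)\Delta$ conflicts. The direct-neighbor term and the out-neighbor term overlap: every earlier neighbor of $v_i$ is itself an out-neighbor $w$ of $v_i$ and is counted inside the $k(\Delta-1)$ path count. Tracking this overlap is the delicate bookkeeping step. We would try to sharpen the count by noting that the earlier neighbors of $v_i$ are exactly its at most $k$ out-neighbors. These are absorbed by counting each out-neighbor $w$ together with its other neighbors, at most $k\Delta$ in total. The in-neighbor paths then add $(k-1)\Delta$ more, giving at most $(2k-1)\Delta$ forbidden colors. Hence $(2k-1)\Delta+1$ colors always suffice.
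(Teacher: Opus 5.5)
The paper does not prove this statement. It imports it from Krumke, Marathe and Ravi and uses it as a black box in the proof of Theorem~\ref{thm:coloring_k_degenerate_graphs}, so there is no in-paper argument to compare against.

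Your final argument is correct and self-contained, and it is the standard proof of this bound. Color greedily along the degeneracy ordering, earliest vertex first, and consider a vertex $v_i$:
\begin{itemize}
\item Through its at most $k$ out-neighbors, $v_i$ sees at most $k\Delta$ earlier vertices within distance $2$: each out-neighbor $w$ itself, plus its at most $\Delta-1$ other neighbors.
\item Through its in-neighbors, $v_i$ sees at most $(k-1)\Delta$ more: each in-neighbor has at most $k-1$ out-neighbors besides $v_i$.
\item An in-then-in path cannot reach an earlier vertex.
\end{itemize}
So $G^2$ is $(2k-1)\Delta$-degenerate along the same ordering, and $(2k-1)\Delta+1$ colors suffice.

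One small correction to your bookkeeping remark. The earlier neighbors of $v_i$ are not in general counted inside the $k(\Delta-1)$ term, because that term counts the neighbors of $w$ other than $v_i$, not $w$ itself. The real fix is that your first bullet's bound of $\Delta$ is too generous. Earlier neighbors are exactly out-neighbors, so there are at most $k$ of them, and $k + k(\Delta-1) = k\Delta$. Your closing count does exactly this, so the conclusion stands. The opening reverse-order attempt can simply be dropped.
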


We can thus use the idea from Remark \ref{rem:colo_strat} together with Corollary \ref{coro:k-degenerate-dominated-colo-bound} and Theorem \ref{thm:dist-2_colo_of_k-dege} to obtain the following upper-bound.

\begin{theorem}\label{thm:coloring_k_degenerate_graphs}
   Let $\mathcal{D}_k$ be the class of $k$-degenerate graphs. Then $\ex{k}{\mathcal{D}_k}{(n)} \leq 4k\sqrt{k + 1}\sqrt{n}$.
\end{theorem}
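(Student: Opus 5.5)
We follow Remark~\ref{rem:colo_strat}: split the vertices by a degree threshold $D$, to be chosen of order $\sqrt{n}$. Let $H$ be the set of vertices of degree larger than $D$ in $G$, and let $L = V(G) - H$. Since $G$ is $k$-degenerate it has at most $kn$ edges, so $|H| \le 2kn/D$.

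The low-degree part is handled by the Krumke–Marathe–Ravi bound. The graph $G[L]$ is $k$-degenerate with maximum degree at most $D$. By Theorem~\ref{thm:dist-2_colo_of_k-dege}, distances in $G[L]$ are not the right notion, since two vertices of $L$ may be at distance $2$ in $G$ only through a vertex of $H$. So we do not $2$-distance colour $G[L]$ directly. Instead we give every vertex of $H$ its own distinct distance-2 colour, and treat the neighbours of $H$ separately.

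The cleaner route uses Corollary~\ref{coro:k-degenerate-dominated-colo-bound}. Let $S = H$ and consider the graph $G' = G[N_G[H]]$. Let $S'$ be the set of vertices of $N_G[H]$ not dominated by $H$ inside $G'$, namely the part of $H$ with no neighbour in $H$. Then $H$ dominates $G'$ as long as every vertex of $N_G(H)$ has a neighbour in $H$, which holds by definition, so $H$ is a dominating set of $G'$. Corollary~\ref{coro:k-degenerate-dominated-colo-bound} then gives a $\colo{k}{(k+1)|H|}$-colouring of $G'$ in which every distance-2 colour is used at most once. In particular no two vertices of $N_G(H)$ share a distance-2 colour.

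Now apply Remark~\ref{rem:colo_strat} with this $S = H$. The graph $G - H$ has maximum degree at most $D$ and is $k$-degenerate, so Theorem~\ref{thm:dist-2_colo_of_k-dege} gives $\chi_2(G - H) \le (2k-1)D + 1$. The remark yields a $\colo{k}{b}$-colouring with
\[ b \le (k+1)\frac{2kn}{D} + (2k-1)D + 1. \]

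The main obstacle is tuning $D$ so that the constants fit under $4k\sqrt{k+1}\sqrt{n}$. We take $D = \sqrt{(k+1)n}$, or $\lceil\cdot\rceil$ of it. The first term becomes $2k\sqrt{k+1}\sqrt{n}$ and the second is at most $(2k-1)\sqrt{k+1}\sqrt{n} + O(k)$, so the total is at most $(4k-1)\sqrt{k+1}\sqrt{n} + O(k)$. This is at most $4k\sqrt{k+1}\sqrt{n}$ once $\sqrt{(k+1)n} \ge 2k$, roughly. For small $n$ the bound $4k\sqrt{k+1}\sqrt n \ge n$ holds trivially, since one can give every vertex its own distance-2 colour. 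We must also check that the edge count $|E| \le kn$ and the rounding of $D$ do not push the constants over. If they do, we instead use the sharper bound $|H| \le 2kn/(D+1)$, or split at degree $\ge D$ rather than $> D$.
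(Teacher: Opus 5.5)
Your proposal is correct and follows essentially the same route as the paper. You take the high-degree set $H$ with $|H|\le 2kn/D$, apply Corollary~\ref{coro:k-degenerate-dominated-colo-bound} to $G[N_G[H]]$ (each distance-2 colour used once), apply Theorem~\ref{thm:dist-2_colo_of_k-dege} to $G-H$, and combine them via Remark~\ref{rem:colo_strat} with $D=\sqrt{(k+1)n}$. Two small clean-ups: the rounding worries are unnecessary, since a real threshold $D$ directly gives $(4k-1)\sqrt{(k+1)n}+1\le 4k\sqrt{(k+1)n}$, and the set $S'$ defined in your third paragraph is never used and can be dropped.
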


\begin{proof}
    Let $G$ be a $k$-degenerate graph of order $n$ and let $S$ be the set of vertices of degree at least $N$ in $G$, $N$ to be determined later. In particular, since $G$ is $k$-degenerate, $G$ has at most $kn$ edges so, in particular, $|S| \le 2kn/N$. Since $S$ is a dominating set of $N_G[S]$, by Corollary \ref{coro:k-degenerate-dominated-colo-bound}, $N_G[S]$ is $\colo{k}{(k + 1)2kn/N}$-colorable and it is possible to use each distance-2 colors at most once. Moreover, $G - S$ is of maximum degree $N - 1$ and therefore, $\chi_2(G - S) \le (2k - 1)(N - 1) + 1$ by Theorem \ref{thm:dist-2_colo_of_k-dege} and $(2k - 1)(N - 1) + 1 \le 2kN$. Therefore, by Remark \ref{rem:colo_strat}, $G$ is $\colo{k}{2kN + (k + 1)2kn/N}$-colorable. By taking $N = \sqrt{k + 1}\sqrt{n}$, we use at most $4k\sqrt{k + 1}\sqrt{n}$ distance-2 colors as desired.
\end{proof}

Regarding lower bounds on $\ex{k}{\mathcal{D}_k}{(n)}$, consider the graph $G$ from Figure~\ref{fig:k-degenerate graph not 2k-colorable}. It is a partial $k$-tree and is of order $kl^2 + (1 + 2k)l + k + 2$. Moreover, it is not $\colo{k}{l}$-colorable. Indeed, since $c$ is of degree $l + 1$ in $G$, one of $u_1, \dots, u_{l + 1}$ must be colored with a distance-1 color, say $u_i$. Therefore, at least one vertex per clique of size $k + 1$ that is connected to $u_i$ must be colored with a distance-2 color, which is a contradiction. This provides the lower-bound \[\ex{k}{\mathcal{D}_k}{(n)} \ge \sqrt{n - 1}/\sqrt{k}.\]

It is well-known that the graphs of tree-width $2$ are planar graphs. More precisely, the graphs of tree-width $2$ are the graphs whose 2-connected components are subgraphs of series-parallel graphs. This class captures, in particular, the outerplanar graphs. Since graphs of tree-width $2$ are $2$-degenerate, Theorem~\ref{thm:coloring_k_degenerate_graphs} provides the upper-bound $\ex{2}{\mathcal{T}_2}{(n)} \le 8\sqrt{3}\sqrt{n}$ where $\mathcal{T}_2$ is the class of graphs of tree-width at most $2$. This is tight up to a constant factor, even when restricted to the class of cactus graphs, the graphs whose 2-connected components are cycles or $K_2$ since the graph from Figure~\ref{fig:k-degenerate graph not 2k-colorable} is a cactus graph when $k = 2$. Therefore, if $\mathcal{O}_1$ is the class of outerplanar graphs (so $\mathcal{O}_1 \subset \mathcal{T}_2$), then $\ex{2}{\mathcal{O}_1}{(n)}\geq \sqrt{n}/\sqrt2$. In the construction of Figure~\ref{fig:k-degenerate graph not 2k-colorable} however, for $k = 2$, the graph contains many triangles. In fact, a cactus-graph must contain triangles to require many distance-2 colors.
\begin{figure}
    \centering
    \includegraphics[width=0.8\linewidth]{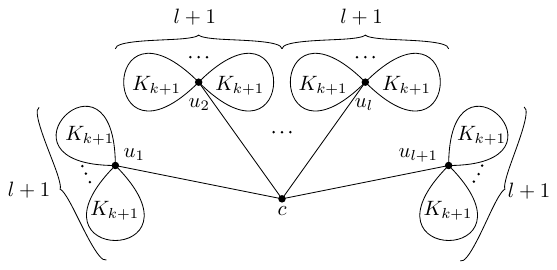}
    \caption{A partial $k$-tree of order $kl^2 + (1 + 2k)l + k + 2$ not $\colo{k}{l}$-colorable.}
    \label{fig:k-degenerate graph not 2k-colorable}
\end{figure}

\begin{proposition}
    Every cactus-graphs of girth at least $4$ is $\colo{2}{1}$-colorable.
\end{proposition}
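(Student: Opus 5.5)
The plan is to use a single distance-2 color to hit every odd cycle, and then $2$-color the rest properly. Concretely, it suffices to find a $2$-independent set $S \subseteq V(G)$ meeting every odd cycle of $G$. Then $G - S$ has no odd cycle, hence is bipartite, and its two sides together with $S$ give a $\colo{2}{1}$-coloring. We may assume $G$ is connected, since components are colored independently. Since $G$ is a cactus of girth at least $4$, every odd cycle of $G$ has length at least $5$. This is the only place the girth hypothesis enters, and it is essential (triangles are exactly the obstruction in Figure~\ref{fig:k-degenerate graph not 2k-colorable}).

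To build $S$, I will root the block–cut tree of $G$ at an arbitrary vertex $r$. For every block $B$ I define its \emph{attachment vertex} $p_B$: it is the vertex of $B$ closest to $r$, or $r$ itself if $r \in B$. I then process the blocks in BFS order from the root and, for each odd cycle $C$ in turn, apply the following rule. If $p_C \in S$ already, then $C$ is already hit and I add nothing. Otherwise I add to $S$ a vertex $x_C$ of $C$ at distance exactly $2$ from $p_C$ along $C$; such a vertex exists because $|C| \ge 5$. Since the cycle is a block, $d_G(x_C, p_C) = 2$. By construction every odd cycle meets $S$.

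It remains to check that $S$ is $2$-independent, and this is the step needing care. The key structural fact about cacti is the following. Every path from $x_C$ to a vertex outside the subtree hanging below $C$ (the blocks processed after $C$ and separated from $r$ by $p_C$) passes through $p_C$. Moreover, $x_C$ itself is not separated from $p_C$ by any other cut vertex on the short side.

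Take two vertices of $S$, with $x_C$ the one added later. If the other, $s$, lies outside the part of $G$ separated from $r$ by $p_C$, then
\[ d(x_C, s) = d(x_C, p_C) + d(p_C, s) \ge 2 + 1 = 3, \]
because $p_C \notin S$ at the time $x_C$ was chosen. Otherwise, $s$ was chosen in a block strictly between $p_C$ and $x_C$ in the block order. Such a block is attached at a vertex of $C$, and hence is processed after $C$. So this case cannot occur for an earlier $s$, except for blocks attached at $p_C$ itself, which are again handled by the first case.

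I expect the main obstacle to be making this distance bookkeeping airtight, in particular arguing that choosing $x_C$ never creates a conflict with a vertex of $S$ chosen earlier via a route avoiding $p_C$. The cactus property (each block meets the rest of the graph only at cut vertices, and cycles are blocks) is what should make the triangle inequality through $p_C$ an equality. I would write this out carefully with the block–cut tree, treating bridges and even cycles as blocks that merely lengthen distances.
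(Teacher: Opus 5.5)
Your argument is correct, but it is organized differently from the paper's. The paper inducts on the number of blocks. It deletes a leaf block $C$ with cut vertex $u$ and colors the rest by induction. Then, for every cycle block, odd or even, it either $2$-colors the path $C-u$ with distance-1 colors (if $u$ got the distance-2 color), or gives the distance-2 color to some $v\in C$ with $d(u,v)\ge 2$ and $2$-colors $C-v$.

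Unrolled, this is the same root-outward greedy rule as yours. The difference is that the induction makes the conflict check purely local: everything outside $C$ is reached through $u$, which is not distance-2 colored, so $d(v,w)\ge 2+1$ for any other distance-2 vertex $w$. Your one-pass construction needs global bookkeeping instead.

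Your ``otherwise'' case is stated loosely. A block containing a vertex at distance $2$ from $x_C$ need not be attached at a vertex of $C$; for example, a block hanging off the far end of a bridge at $x_C$. To close the case, use the fact that every vertex other than $r$ is a non-attachment vertex of exactly one block:
\begin{itemize}
\item An earlier $s\in S$ equals $x_D$ for an odd cycle $D\neq C$ processed before $C$, so under BFS order $D$ is not a descendant of $C$.
\item Hence $s$ is not in the branch cut off by $p_C$ that contains $C$; in particular $s\notin V(C)\setminus\{p_C\}$.
\item So every $x_C$--$s$ path passes through $p_C$. Moreover $s\neq p_C$, because $p_C\notin S$ when $x_C$ was chosen. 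This gives $d(x_C,s)\ge 3$.
\end{itemize}

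Your route gives an explicit, non-recursive construction that only touches odd cycles. Note, however, that a $2$-independent set meeting all odd cycles is just a restatement of $\colo{2}{1}$-colorability, so your conclusion is not stronger than the paper's.
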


\begin{proof}
    Let $G$ be a cactus graph of girth at least $4$ and of order $n$, and assume without loss of generality that $G$ is connected. We proceed by induction on the number of 2-connected components of the graph. If $G$ has just one connected component, then it is just a cycle and therefore $G$ is $\colo{2}{1}$-colorable. Otherwise, consider $C$, a 2-connected component of $G$ containing exactly one cut-vertex ($C$ is called a leaf block). Let $u$ be the cut-vertex that belongs to $C$. If $C$ induces a $K_2$ in $G$, then let $v$ be the vertex in $C - u$. Since $v$ has degree $1$, we can $\colo{2}{1}$-color $G - v$ by induction and then at least one distance-1 color will be available for $v$. Otherwise, let $v$ be a vertex at distance at least $2$ from $u$ in $C$. Since $G$ has girth $4$, $C$ is a cycle of length at least $4$ so $v$ exists. We $\colo{2}{1}$-color $G - C+u$ by induction. If $u$ is colored with a distance-2 color, then we can color $C - u$ only with distance-1 colors. Otherwise, we can color $v$ with the distance-2 color and $C - u - v$ with distance-1 colors.
\end{proof}

\subsection{Extremal values for planar graphs}

In this section, we study the maximum number of distance-2 colors required to color a $(a + 1)$-colorable planar graph when $a$ distance-1 colors are available.  

\subsubsection{Bounds on $\ex{1}{\mathcal{G}}{}$}

In the case $a = 1$, the graph from Figure~\ref{fig:bound_bipartitr_planar_linear_1_k} is of order $4k$ and is not $\colo{1}{k}$-colorable. Indeed, assume otherwise that this graph is $\colo{1}{k}$-colorable. Since $\deg(s) = \deg(t) > k$, $s$ and $t$ must be colored with a distance-2 color. So at least $k$ neighbors of $s$ must be colored with a distance-1 color, and similarly for $t$, so there exists $i$ such that the edge $u_iv_i$ is monochromatic, which is a contradiction. Notice that the graph from Figure~\ref{fig:bound_bipartitr_planar_linear_1_k} is $2$-outer planar, bipartite, and has tree-width $2$. So this construction provides the lower bound $\ex{1}{\mathcal{C}}{(n)} \geq n/4$ even when $\mathcal{C}$ is the class of $2$-outer planar, bipartite graphs of tree-width at most $2$. Here, $\mathcal{C}$ is minimal in the sense that if we consider subclasses of $\mathcal{C}$, we quickly obtain graphs that are $\colo{1}{o(n)}$-colorable.

\begin{figure}
    \centering
    \includegraphics[width=0.5\linewidth]{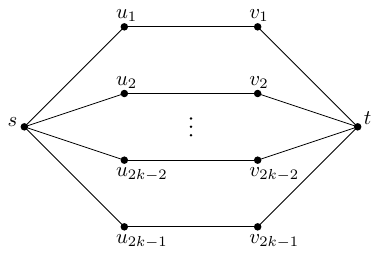}
    \caption{A bipartite planar graph of girth $6$ and of order $4k$ not $\colo{1}{k}$-colorable.}
    \label{fig:bound_bipartitr_planar_linear_1_k}
\end{figure}

In order to upper-bound the number of distance-2 colors necessary to color a triangle-free outerplanar graph when only one distance-1 color is available, we first need to upper-bound the size of a minimum vertex cover of a triangle-free outerplanar graph. To do so, we will prove that any such graph is homomorphic to $C_5$. A \emph{graph homomorphism} from $G$ to $H$ is a function $f : V(G) \rightarrow V(H)$ such that if $uv \in E(G)$ then $f(u)f(v) \in E(H)$ in which case we say that $G$ is \emph{homomorphic} to $H$.

\begin{lemma}[Pinlou and Sopena \cite{pinlou2006oriented}, 2006]\label{lemma:pinlou2006oriented}
    Every outerplanar graph $G$ of girth $g$ and of minimum degree at least $2$ contains a face of size $l \ge g$ with at least $l - 2$ consecutive vertices of degree $2$.
\end{lemma}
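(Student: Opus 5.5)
This is the cited lemma of Pinlou and Sopena. I would prove it by looking at the weak dual of a $2$-connected block of $G$ and taking a leaf face.

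\emph{Reduction to a leaf block.} If $G$ is disconnected, work in one component. If $G$ is connected but not $2$-connected, I would take a leaf block $B$ of the block-cut tree. Since $\delta(G)\ge 2$, $B$ cannot be a $K_2$: its non-cut endpoint would have degree $1$. So $B$ is a $2$-connected outerplanar graph with at least three vertices. Every vertex of $B$ except its cut vertex $c$ has the same degree in $B$ as in $G$. If $G$ is $2$-connected, set $B=G$ and choose no cut vertex.

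\emph{Choosing the face.} Embed $B$ outerplanarly. The outer boundary is then a Hamiltonian cycle $H$, and every other edge is a chord of $H$.
\begin{itemize}
\item If $B$ has no chord, then $B=H$ is a cycle of length $l\ge g$. All its vertices have degree $2$ in $B$, so at least $l-1$ of them have degree $2$ in $G$, namely all except possibly $c$. These $l-1$ vertices are consecutive, which gives the claim.
\item Otherwise the weak dual (inner faces, adjacent when they share a chord) is a tree with at least two leaves. A leaf inner face $F$ is bounded by exactly one chord $xy$ together with a path of $H$ from $x$ to $y$. Its $l-2$ interior vertices lie on no chord, so they have degree $2$ in $B$, and they are consecutive on $F$. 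Since $F$ is a cycle of $G$, we have $l\ge g$.
\end{itemize}

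\emph{Avoiding the cut vertex.} This is the main obstacle. An interior vertex of $F$ could be $c$, in which case it has degree greater than $2$ in $G$. There are at least two leaf faces, and their interiors are disjoint paths of $H$, since each such vertex lies on only one inner face. Hence at most one leaf face contains $c$ in its interior, and I would choose a leaf face that does not. All $l-2$ interior vertices of that face have degree exactly $2$ in $G$, which proves the statement.
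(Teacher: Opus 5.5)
The paper gives no proof of this lemma; it quotes it from Pinlou and Sopena, and applies it only to $2$-connected graphs, in the proof of Proposition~\ref{prop:homomorphism}. Your argument is a correct, self-contained proof of the statement as written.

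\begin{itemize}
\item The reduction to a leaf block is sound: a $K_2$ leaf block would create a vertex of degree $1$.
\item Leaf faces of the weak dual are cycles of length $l\ge g$, and their $l-2$ interior vertices have degree $2$ in $B$.
\item Interiors of distinct leaf faces are disjoint, so at most one of them contains the cut vertex $c$, the only vertex whose degree can differ between $B$ and $G$. This step is exactly what the hypothesis $\delta(G)\ge 2$, rather than $2$-connectivity, requires.
\end{itemize}

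Two points deserve one line each when you write it up.
\begin{itemize}
\item An interior vertex $w$ of the $H$-path bounding a leaf face $F$ lies on no chord. A chord at $w$ would have to be drawn inside the region bounded by $xy$ and that path, and this region is the face $F$ itself.
\item The inner faces of $B$ remain faces of $G$. In any outerplanar embedding of $G$, the rest of $G$ hangs off $c$ in the outer face of $B$; otherwise some vertex would not lie on the outer face.
\end{itemize}

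Finally, your construction also shows that the two face vertices outside the degree-$2$ run are adjacent. They are $x$ and $y$, joined by the chord, or two consecutive cycle vertices in the chordless case. This is exactly the property $uv\in E(G)$ that the proof of Proposition~\ref{prop:homomorphism} relies on.
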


\begin{proposition}\label{prop:homomorphism}
    Let $g = 2k$ or $g = 2k + 1$. Every outerplanar graph of girth at least $g$ is homomorphic to $C_{2k + 1}$.
\end{proposition}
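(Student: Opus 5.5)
We prove the statement by induction on the number of vertices, using Lemma~\ref{lemma:pinlou2006oriented} to find a reducible configuration. Let $G$ be an outerplanar graph of girth at least $g$, where $g \in \{2k, 2k+1\}$. We may assume $k \geq 2$: if $k = 1$ the target $C_3$ is a triangle, and every outerplanar graph is $3$-colorable, hence homomorphic to $C_3$. We may also assume $G$ is connected, since the components can be mapped independently.

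\textbf{Vertices of degree at most $1$.} If $G$ has a vertex $v$ of degree at most $1$, we apply induction to $G - v$, which is still outerplanar of girth at least $g$. We then map $v$ to any neighbor in $C_{2k+1}$ of the image of its neighbor (or to any vertex if $v$ is isolated).

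\textbf{Minimum degree at least $2$.} Otherwise, Lemma~\ref{lemma:pinlou2006oriented} gives a face of length $l \geq g \geq 2k$ containing a path $P = x, u_1, \dots, u_{l-2}, y$. Here the internal vertices $u_1, \dots, u_{l-2}$ all have degree $2$, and $x$ and $y$ are adjacent, since the face is a cycle $x u_1 \cdots u_{l-2} y x$. Let $G' = G - \{u_1, \dots, u_{l-2}\}$. It is outerplanar of girth at least $g$, so by induction there is a homomorphism $f : G' \to C_{2k+1}$. Because $xy \in E(G')$, the images $f(x)$ and $f(y)$ are adjacent in $C_{2k+1}$.

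\textbf{Extending $f$ along the path.} It remains to extend $f$ to the path from $x$ to $y$ of length $l - 1 \geq 2k - 1$, with endpoint images at distance exactly $1$ in $C_{2k+1}$. We use the standard fact about walks in $C_{2k+1}$: between two vertices at cyclic distance $d$ there are walks of length $d + 2t$ for every $t \geq 0$, and walks of length $2k+1-d + 2t$ for every $t \geq 0$. With $d = 1$, the available lengths are all odd lengths $\geq 1$ and all even lengths $\geq 2k$. If $l - 1$ is odd, a walk exists. If $l - 1$ is even, then $l$ is odd, so $l \geq 2k+1$ and hence $l - 1 \geq 2k$, so a walk exists again. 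Mapping $u_1, \dots, u_{l-2}$ along such a walk extends $f$ to all of $G$.

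\textbf{Main obstacle.} The delicate point is the parity bookkeeping when $g = 2k$. A face of length exactly $2k$ gives an odd path length $2k - 1$, which is fine. An odd face has length at least $2k+1$, because $l \geq 2k$ and $l$ odd together force $l \geq 2k+1$; this is exactly what is needed. We also check that the edge $xy$ exists, i.e. that the face found by Lemma~\ref{lemma:pinlou2006oriented} has exactly $l - 2$ consecutive degree-$2$ vertices with $x \neq y$. If the whole face had degree-$2$ vertices, $G$ would be a single cycle of length at least $g$, which maps to $C_{2k+1}$ directly by the same walk argument.
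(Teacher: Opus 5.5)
Your proof is correct and follows essentially the paper's route: induction (the paper uses a minimal counterexample), applying Lemma~\ref{lemma:pinlou2006oriented}, deleting the $l-2$ consecutive degree-$2$ vertices, and re-embedding the $x$--$y$ path of length $l-1$ into $C_{2k+1}$ by parity. Odd lengths use the arc of length $1$, and even lengths use the arc of length $2k$, which is available because an odd face has length at least $2k+1$. The differences are minor: you remove vertices of degree at most $1$ where the paper splits at cut vertices, and you spell out the walk-length and parity bookkeeping that the paper leaves implicit.
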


\begin{proof}
    Let $k$ be such that $g = 2k$ of $g = 2k + 1$. Assume for the sake of contradiction that the previous statement is false, and let $G$ be a minimal counter-example. If $G$ contains a cut-vertex $v$, then let $H_1, \dots, H_c$ be the connected components of $G - v$. Then $H_1 + v, \dots, H_c + v$ are all homomorphic to $C_{2k + 1}$ by minimality of $G$, and therefore $G$ is homomorphic to $C_{2k + 1}$, which is a contradiction. So we can assume that $G$ is $2$-connected and therefore has minimum degree at least $2$ (or $G$ is $P_1$ or $P_2$ in which case $G$ is homomorphic to $C_{2k + 1}$, which is a contradiction). Then by Lemma \ref{lemma:pinlou2006oriented}, $G$ contains a face $F$ of size $l \ge g$ with at least $l - 2$ consecutive vertices of degree $2$. Let $P$ be those consecutive vertices and let $u$, $v$ be the vertices of $F - P$. By minimality of $G$, $G - P$ is homomorphic to $C_{2k + 1}$ so let $f$ be a homomorphism from $G - P$ to $C_{2k + 1}$. Since $uv \in E(G)$, $f(u) \neq f(v)$ so there exists $P_o$, $P_e$, the two paths from $f(u)$ to $f(v)$ in $C_{2k + 1}$ such that $P_o$ is of odd length and $P_e$ is of even length. If $P$ is of odd length, then $F$ is homomorphic to $P_o$ and if $P$ is of even length, then $F$ is homomorphic to $P_e$. Therefore, $G$ is homomorphic to $C_{2k + 1}$, which is a contradiction, so a minimal counter-example does not exist and the Proposition follows.
\end{proof}

\begin{corollary}\label{coro:vertex_cover_coro}
    Let $G$ be an outer-planar graph of girth $g$ and of order $n$, then $G$ has a vertex cover of order at most $n(k + 1)/(2k + 1)$ where $k$ is such that $g = 2k$ or $g = 2k + 1$.
\end{corollary}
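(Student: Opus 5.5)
By Proposition~\ref{prop:homomorphism}, since $G$ is outerplanar of girth at least $g$ with $g = 2k$ or $g = 2k+1$, there is a homomorphism $f : V(G) \to V(C_{2k+1})$. The plan is to pull back a small vertex cover of $C_{2k+1}$ through $f$, choosing which cover to pull back by averaging over the rotations of the cycle.

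Label the vertices of $C_{2k+1}$ by $\mathbb{Z}_{2k+1}$ and write $n_i = |f^{-1}(i)|$, so that $\sum_i n_i = n$. A set $X \subseteq V(C_{2k+1})$ is a vertex cover exactly when every edge $\{i,i+1\}$ meets $X$. One such cover is
\[
X_0 = \{0,2,4,\dots,2k\},
\]
which has size $k+1$: consecutive even labels are adjacent except that $2k$ and $0$ are adjacent, and every edge $\{i,i+1\}$ with $i<2k$ contains an even label. For each $j \in \mathbb{Z}_{2k+1}$, the rotation $X_j = X_0 + j$ is again a vertex cover of size $k+1$.

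The key observation is that a vertex cover pulls back to a vertex cover. If $X$ covers $C_{2k+1}$, then $f^{-1}(X)$ covers $G$: for any edge $uv \in E(G)$, the pair $f(u)f(v)$ is an edge of $C_{2k+1}$, so one of its endpoints lies in $X$. Its size is $|f^{-1}(X)| = \sum_{i \in X} n_i$.

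To finish, I average over the $2k+1$ rotations. Each label $i$ lies in exactly $k+1$ of the sets $X_j$, so
\[
\sum_{j} |f^{-1}(X_j)| = (k+1)n.
\]
Hence some $j$ satisfies $|f^{-1}(X_j)| \le n(k+1)/(2k+1)$, which is the claimed bound.

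I do not expect a real obstacle, since the homomorphism lemma does all the work. The only points to check are that $X_0$ really covers every edge of $C_{2k+1}$, including the wrap-around edge $\{2k,0\}$, and that the statement is read as "girth at least $g$", so that it matches the hypothesis of Proposition~\ref{prop:homomorphism}.
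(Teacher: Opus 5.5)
Your proof is correct and is the paper's argument in complemented form. The paper averages $|f^{-1}(S)|$ over the $2k+1$ independent sets of size $k$ in $C_{2k+1}$ and takes the complement of a large pull-back; those complements are exactly your rotated covers $f^{-1}(X_j)$. (The aside ``consecutive even labels are adjacent'' should say that no two labels of $X_0$ are adjacent except $2k$ and $0$, but this plays no role in the argument.)
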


\begin{proof}
    By Proposition \ref{prop:homomorphism}, $G$ is homomorphic to $C_{2k + 1}$. For every independent set $S$ of $C_{2k + 1}$, $f^{-1}(S)$ is an independent set of $G$. Let $\mathcal{S}_k$ be the set of independent sets of size $k$ of $C_{2k + 1}$, we have
    \[\sum_{S \in \mathcal{S}_k}|f^{-1}(S)| = \sum_{v \in V(G)} |\{S \in \mathcal{S}_k: v \in S\}| = nk\]
    since every vertex of $C_{2k + 1}$ belongs to exactly $k$ maximum independent sets of $C_{2k + 1}$. Moreover, $|\mathcal{S}_k| = 2k + 1$ so by the pigeon-hole principle, there exists $S \in \mathcal{S}_k$ such that $|f^{-1}(S)| \ge nk/(2k + 1)$ and therefore $G$ has a vertex cover of size at most $n(k + 1)/(2k + 1)$.  
\end{proof}

Notice that Corollary \ref{coro:vertex_cover_coro} is tight since the smallest vertex cover of a disjoint union of $n$ $C_{2k + 1}$ is exactly $(k + 1)n$. We are now ready to prove the following proposition that bounds the required number of distance-2 color in forests and in triangle-free outerplanar graphs when one distance-1 color is available.

\begin{proposition}\label{prop:upperbound_colo_for_forests_and_triangle-free_outerplanar_graphs}
    Forests are $\colo{1}{4\sqrt{2}\sqrt{n}}$-colorable and triangle-free outerplanar graphs are $\colo{1}{4\sqrt{34/5}\sqrt{n} - 1}$-colorable.
\end{proposition}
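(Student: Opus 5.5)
We follow the strategy of Remark~\ref{rem:colo_strat}, in the same way as in the proof of Theorem~\ref{thm:coloring_k_degenerate_graphs}, with the vertex cover bound of Corollary~\ref{coro:vertex_cover_coro} replacing the degeneracy argument of Lemma~\ref{lemma:k-degenerate-dominated-colo-bound}. Fix a threshold $N$ to be optimised later, and let $S$ be the set of vertices of degree at least $N$ in $G$. Forests and outerplanar graphs have fewer than $n$ and at most $2n-3$ edges respectively, so $|S| \le 2n/N$ for forests and $|S| \le 4n/N$ for outerplanar graphs.

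First, we bound $\chi_2(G-S)$. The graph $G - S$ has maximum degree at most $N-1$. For a forest, a greedy 2-distance colouring along a BFS/root order uses at most $\Delta+1 \le N$ colours, since each vertex only sees its parent, grandparent and already-coloured siblings. For outerplanar graphs, which are $2$-degenerate, Theorem~\ref{thm:dist-2_colo_of_k-dege} gives at most $3(N-1)+1 \le 3N$ colours. (Sharper known bounds, such as $\Delta+2$ for outerplanar graphs with $\Delta\ge 7$, could be used instead, but the constants $4\sqrt2$ and $4\sqrt{34/5}$ suggest a cruder count with constant $2$ or $3$ in front of $N$.)

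Next, we colour $G[N_G[S]]$ using one distance-1 colour and pairwise distinct distance-2 colours on $N_G(S)$. The key point is that with a single distance-1 colour, the vertices receiving it must form an independent set, so everything outside that set is coloured by distance-2 colours. We give every vertex of $N_G[S]$ that lies in some vertex cover $C$ of $G[N_G[S]]$ its own fresh distance-2 colour, and put the remaining independent set in the distance-1 class. Each distance-2 colour is then used once, so Remark~\ref{rem:colo_strat} applies.

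To count the fresh colours, we combine $|S|$ with a vertex cover of the graph induced on $N_G(S)$. For forests, $N_G(S)\setminus S$ induces a forest, which is bipartite, so half of it suffices. The cleaner route, however, is to note that $G[N(S)]$ inherits the structure of $G$: forests are homomorphic to $K_2$, and triangle-free outerplanar graphs are homomorphic to $C_5$ by Proposition~\ref{prop:homomorphism} with $k=2$. Corollary~\ref{coro:vertex_cover_coro} then gives a cover of size at most $\tfrac35 |N(S)|$, and in the forest case at most $\tfrac12|N(S)|$. This is where the fraction $34/5$ should come from.

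The main obstacle is that $|N_G(S)|$ is not small: it can be as large as $\sum_{s\in S}\deg(s)$, which may be linear in $n$. So "a vertex cover of $N_G[S]$" is not $O(n/N)$, and the step above does not directly give the desired bound. The fix I would try is to cover only the edges inside $N_G[S]$ that actually need it, handling leaves of high-degree vertices specially. In $N_G[S]$, every vertex of $S$ receives a distinct distance-2 colour. Every vertex of $N(S)\setminus S$ is put in the distance-1 class unless it has a neighbour in $N(S)\setminus S$ conflicting with it. Thus only the edges of $G[N(S)\setminus S]$ need to be covered. These edges are few: in a forest each such vertex has at most one neighbour in $S$, and degree considerations bound $|E(G[N(S)\setminus S])|$ by a multiple of $|S|$ plus the edges touching low-degree vertices, which are controlled by $N$. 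Taking a cover of size at most the appropriate fraction of the vertices incident to those edges, then balancing $cN$ against $c'n/N$ with $N=\Theta(\sqrt n)$, should give the stated constants. Pinning down exactly this count of conflicting edges, and checking that the vertices of $N(S)$ in the distance-1 class do not create distance-1 conflicts with $G-S$, is where I expect the real work to be.
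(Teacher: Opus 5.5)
There is a genuine gap, and it is exactly the step you leave open. The argument needs the set $V'$ of vertices of $N_G(S)\setminus S$ having a neighbour in $N_G(S)\setminus S$ to satisfy $|V'|=O(|S|)$. Only then does a cover of $G[V']$, of size at most $\frac35|V'|$ by Corollary~\ref{coro:vertex_cover_coro}, cost $O(n/N)$ colours.

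Degree considerations cannot give this. Take two vertices $s_1,s_2$ of huge degree whose neighbourhoods are joined by a perfect matching: this graph is triangle-free and every other vertex has degree $2$, yet $|V'|$ is linear in $n$. What excludes it is outerplanarity, since it contains a $K_{2,3}$-subdivision. Your claim that in a forest such a vertex has at most one neighbour in $S$ is also false: the middle vertex of a path $s_1vs_2$ is a counterexample.

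The missing idea is structural. Since $G$ is triangle-free, every edge $uv$ of $G[N_G(S)\setminus S]$ extends to a path $s\,u\,v\,s'$ with $s\neq s'$ in $S$. Contracting each vertex of $N_G(S)\setminus S$ into one of its $S$-neighbours therefore maps these edges to non-loop edges of a minor on $S$.
\begin{itemize}
\item For a forest, that minor is again a forest, so there are at most $|S|-1$ such edges.
\item For an outerplanar graph, it is an outerplanar multigraph in which each pair $s,s'$ carries only boundedly many of these edges (otherwise a $K_{2,3}$-subdivision appears). Hence $|V'|=O(|S|)$.
\end{itemize}
The paper works with $|V'|\le 4|S|$, so $|S'|=|S\cup C|\le\frac{17}{5}|S|$.

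Second, even with that count supplied, your treatment of $G-S$ is too lossy for the stated constant. With $\chi_2(G-S)\le 3N$ from Theorem~\ref{thm:dist-2_colo_of_k-dege} and $|S\cup C|\le\frac{17}{5}\cdot\frac{4n}{N}$, the best you get is $2\sqrt{204/5}\,\sqrt n\approx 12.8\sqrt n$. This is above the target $4\sqrt{34/5}\,\sqrt n-1\approx 10.4\sqrt n-1$.

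The $34/5$ comes from a term $2\alpha\sqrt n-1$ rather than $3N$. The paper precolours $N_G[S]$, putting fresh distance-2 colours on $S'$ and the distance-1 colour on $N_G(S)\setminus S'$. It then colours only $V(G)\setminus N_G[S]$, greedily along the elimination ordering of a $2$-tree $T\supseteq G$. Such a vertex $v$ has no $G$-neighbour in $S$, and its at most two earlier $T$-neighbours are adjacent. Moreover, a later common neighbour of $v$ and an earlier vertex $w$ forces $w$ to be one of those two. So at most $2(\alpha\sqrt n-1)$ colours are forbidden, and $\frac{17}{5}\cdot\frac{4\sqrt n}{\alpha}+2\alpha\sqrt n-1$ is minimised at $\alpha=\sqrt{34/5}$. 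Actually invoking the sharper $\Delta+O(1)$ bounds for squares of outerplanar graphs that you mention in passing would also work, but you would have to commit to it.

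The forest statement needs none of this. It is Theorem~\ref{thm:coloring_k_degenerate_graphs} with $k=1$, since $4k\sqrt{k+1}=4\sqrt2$, and that is exactly how the paper proves it.

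Finally, your worry about distance-1 conflicts with $G-S$ is moot. In Remark~\ref{rem:colo_strat}, no vertex outside $N_G[S]$ receives a distance-1 colour.
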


\begin{proof}
    Forests are $1$-degenerate so we can apply Theorem~\ref{thm:coloring_k_degenerate_graphs} to $\colo{1}{4\sqrt{2}\sqrt{n}}$-color every forests. 
    Let $G$ be a triangle-free outerplanar graph, and let $S$ be the set of vertices of degree at least $\alpha \sqrt{n}$ for some real number $\alpha$ to be optimized later. Since $G$ is outerplanar, it is $2$-degenerate, so $G$ has at most $2n$ edges and therefore $|S| \leq 4\sqrt{n}/\alpha$. 

    In order to color $G$, our first goal is to construct a set $S' \subseteq N_G[S]$ such that $S \subseteq S'$, $|S'| = \Theta(|S|)$ and $N_G[S] - S'$ is an independent set. If $|S| = 1$, then since $G$ is triangle-free, $N_G[S] - S'$ is an independent set. Otherwise, we claim that if $|S| \geq 2$, then $|N_G[S]| \le 5|S|-4$.
    Indeed, we proceed by induction on $\kappa_2$, the number of $2$-connected components of $G[N_G[S]]$. If $\kappa_2 = 1$, then the outerface of $G[N_G[S]]$ is a cycle and since $S$ is a dominating set of $G[N_G[S]]$ and $|S| \ge 2$, we have $|N_G[S]| \le 3|S| \le  5|S| - 4$. Now assume that $\kappa_2 \geq 2$ and let $B$ be a leaf block of $G[N_G[S]]$, that is a $2$-connected component of $G[N_G[S]]$ with exactly one cut-vertex $v$. Let $m = |S \cap B|$, we then have, 
        \[|G[N_G[S]]| = |G[N_G[S] - (B - v)]| + |G[B]| - 1\]
    since $G[N_G[S] - (B - v)]$ and $G[B]$ only interest on $v$. So by induction hypothesis, 
        \[|G[N_G[S]]| \le 5(|S| - (m - 1)) - 4 + 5m - 4 - 1 = 5|S| - 4\]
    as desired. Now consider the set 
    \[V' = \{v | \exists svv's' \textrm{ a path of } G \textrm{ with } s, s' \in S \textrm{ and } v, v' \in N_G[S] - S\}\]
    Then by the previous claim, and since $V' \subseteq N_G[S] - S$, $|V'| \le 4|S| - 4 \le 4|S|$ so, in particular, there exists a set $C \subseteq V'$ that is a vertex cover of $G[V']$ of size at most $12|S|/5$ since by Corollary \ref{coro:vertex_cover_coro}, every outerplanar graph or order $n$ has a vertex cover of size at most $3n/5$. We thus define $S'$ to be $S \cup C$ so $|S'| \le 17|S|/5$. Assume that $N_G[S] - S'$ is not an independent set. Then there exists $u$, $v \in N_G[S] - S'$ such that $uv$ is an edge of $G[N_G[S]]$. Since $S$ is a dominating set of $N_G[S]$ and $G$ is triangle-free, there must exist $s$, $s' \in S$ such that $suvs'$ is a path in $G[N_G[S]]$. But $u$ and $v$ belong to $V'$ and $uv$ is not covered by $C$ and this is a contradiction.

    We can now color $G$. We first construct the $2$-tree $T$ that is a supergraph of $G$ and that we will color. Since $G$ is outerplanar, it has treewidth at most $2$ so $T$ exists. We first color every vertex from $S'$ with a unique distance-2 color each, at the cost of at most $|S'| \leq 17|S|/5 \le 68\sqrt{n}/(5\alpha)$ distance-2 colors. Then, we color every vertex in $N_G(S) - S'$ with a distance-1 color. This pre-coloring of $T$ might not be valid, but it is valid when we reduce it to $G$ since as previously observed, $N_G(S) - S'$ is an independent set in $G$ and each distance-2 color is used once. Then, we color the remaining vertices of $T$ following a $2$-degenerate ordering provided by the construction of this $2$-tree. 
    Let $v_1, \dots, v_n$ be an ordering of the vertices and $T_i$ be the subgraph of $T$ induced by $\{v_1, \dots, v_i\}$ such that for every $i$, $\deg_{T_i}(v_i) = 2$. By definition of a $2$-tree, the two neighbors of $v_i$ are neighbors themselves. Let $A$ be a set of distance-2 colors distinct from the distance-2 colors used to color vertices in $S'$ whose size will be determined later. If $v_1 \not\in N_G[S]$ was precolored, then we do not recolor it. Otherwise, we can color it with any distance-2 color from $A$. 
    Then, assume that $i \ge 2$ and that we managed to color $T_{i - 1}$ while respecting the precoloring. We thus need to color $v_i$ in order to extend our coloring up to $T_i$.
    As previously observed, the two neighbors of $v_i$ in $G_i$ are neighbors themselves so they do not have the same color. Therefore, by adding $v_i$ to $T_{i - 1}$, we do not create conflicts. If $v_i$ was precolored, then we do not recolor it. Otherwise, the neighbors of $v_i$ are of degree at most $\alpha \sqrt{n} - 1$ in $G_{i - 1}$ and so at most $2\alpha \sqrt{n} - 2$ distance-2 colors are unavailable for $v_i$. Therefore, if $|A| \geq 2\alpha \sqrt{n} - 1$, we can color $T$ such that there is no conflict involving the distance-2 colors. When reducing this coloring to $G$, there is still no conflict involving distance-2 colors since $G$ is a subgraph of $T$ and there is no conflict involving distance-1 colors since the vertices colored with the distance-1 color induce an independent set. In total, we used at most $|S'| + |A| \le 68\sqrt{n}/(5\alpha) +  2\alpha \sqrt{n} - 1$ distance-2 colors and this quantity is minimized for $\alpha = \sqrt{34/5}$ in which case we managed to $\colo{1}{4\sqrt{34/5}\sqrt{n} - 1}$-color $G$ as desired.
\end{proof}

Proposition \ref{prop:upperbound_colo_for_forests_and_triangle-free_outerplanar_graphs} is tight up to a constant factor since, for instance, the graph the graph $G$ from Figure \ref{fig:k-degenerate graph not 2k-colorable} with $k = 1$ is a tree of order $1 + (l + 1)^2$ and is not $\colo{1}{l}$-colorable. This provides the lower-bound $\ex{1}{\mathcal{C}}{(n)} \ge \sqrt{n - 1}$ whether $\mathcal{C}$ is the class of trees or the class of triangle-free outerplanar graphs.

\subsubsection{Bounds on $\ex{2}{\mathcal{G}}{}$} 

\begin{figure}
    \centering
    \includegraphics[width=0.5\linewidth]{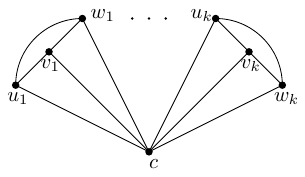}
    \caption{A 2-outerplanar graph of order $3k + 1$ not $\colo{2}{k}$-colorable.}
    \label{fig:planar_linear_bound}
\end{figure}

If we consider the class $\mathcal{P}$ of planar graphs, then Figure~\ref{fig:planar_linear_bound} provides a linear lower bound on $\ex{2}{\mathcal{P}}{}$. 
Indeed, let $G$ be the graph of Figure~\ref{fig:planar_linear_bound} and assume that $G$ is $\colo{2}{k}$-colorable. If $c$ is colored with a distance-2 color, then since each triangle in $G - c$ must contain a vertex that is colored with a distance-2 color, at least $k + 1$ vertices are colored with a distance-2 color. $G$ being of diameter $2$, we are in a state of contradiction. Otherwise, $c$ is colored with a distance-1 color, but then in every triangle of $G - c$, at least two vertices must be colored with a distance-2 color. So in total, at least $2k$ vertices must be colored with a distance-2 color, and since $G$ is of diameter $2$ and $k \ge 1$, we found a contradiction as well. Since $G$ is of order $3k + 1$ and is not $\colo{2}{k}$-colorable, we obtain the lower-bound $\ex{2}{\mathcal{P}}{(n)}\geq (n - 1)/3$. 

It is classical in the context of graph coloring problems or graph partition problems to study triangle-free graphs or graphs of higher girth. Let $\mathcal{P}_g$ be the class of planar graphs of girth at least $g$, as we will see in what follows,
$\ex{2}{\mathcal{P}_4}{}$ is sublinear. In order to color such graphs with $O(\sqrt{n})$ distance-2 colors, we will need the following two lemmas.

For a graph $G$, an \emph{odd cycle transversal} of $G$ is a set $S \subseteq V(G)$ such that for every cycle $C$ of $G$ of odd length, $C \cap S \neq \emptyset$. We denote by $\oct(G)$ the smallest integer $k$ such that there exists an odd cycle transversal of $G$ of size $k$. Notice that for every odd cycle transversal $S$ of $G$, $G - S$ is $2$-colorable.

\begin{lemma}\label{lemma:oct_bound}
    Let $G$ be a triangle-free planar graph and let $D$ be a connected dominating set of $G$. Then $\oct(G - D) \le 5|D|/3$.
\end{lemma}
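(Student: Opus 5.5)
The plan is to reduce to an outerplanar problem by contracting $D$, and then to count odd faces using Euler's formula. We may assume $G$ is connected.

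\textbf{Step 1: $G - D$ is outerplanar.} Since $D$ is connected, contracting $G[D]$ into a single vertex $d$ gives a planar minor $G/D$. Because $D$ is dominating, $d$ is adjacent to every vertex of $G - D$. Deleting a vertex adjacent to all others from a plane graph leaves all remaining vertices on one face. Hence $G - D$ is outerplanar, and we fix an embedding with $d$, and thus all of $D$, lying in its outer face. An inner face of $G - D$ contains no vertex of $D$ in its interior, so every inner face of $G - D$ is also a face of $G$. Since $G$ is triangle-free, such a face has length at least $4$, and at least $5$ if it is odd.

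\textbf{Step 2: $\oct(G - D)$ is at most the number $f_{\mathrm{odd}}$ of odd inner faces.} In an outerplanar graph, every cycle bounds a union of inner faces that forms a subtree of the weak dual. The length of the cycle has the same parity as the sum of the lengths of these faces, so every odd cycle encloses an odd face. I would prove $\oct \le f_{\mathrm{odd}}$ by induction on the weak dual forest, working one block at a time:
\begin{itemize}
\item take a leaf face $F$;
\item if $F$ is odd, put into the transversal a vertex of degree $2$ on $F$ (one exists by Lemma~\ref{lemma:pinlou2006oriented}) when this destroys only $F$, or otherwise an endpoint of the chord separating $F$ from the rest;
\item if $F$ is even, remove a degree-$2$ vertex of $F$, which changes no odd cycle's parity structure in the remainder;
\item recurse on the rest.
\end{itemize}
Making the odd-leaf case fully rigorous needs a little care. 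An alternative is a direct charging argument, which is routine.

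\textbf{Step 3: bound $f_{\mathrm{odd}}$ by Euler's formula on $G$.} Let $n' = |V(G-D)|$, let $m = |D|$, and let $c$ be the number of components of $G - D$. The edges of $G$ include:
\begin{itemize}
\item the $n' - c + f_{\mathrm{in}}$ edges of $G - D$, where $f_{\mathrm{in}}$ is its number of inner faces;
\item at least $n'$ edges between $D$ and $G - D$;
\item at least $m - 1$ edges inside $D$.
\end{itemize}
Every face of $G$ has length at least $4$, and the $f_{\mathrm{odd}}$ odd ones have length at least $5$. So $2e \ge 4f + f_{\mathrm{odd}}$, and Euler's formula gives $f_{\mathrm{odd}} \le 4(n'+m) - 2e - 8$. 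Substituting the edge bound and using $f_{\mathrm{in}} \ge f_{\mathrm{odd}}$ yields
\[3 f_{\mathrm{odd}} \le 2m + 2c - 6.\]

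\textbf{The main obstacle is the $2c$ term.} Components of $G - D$ that are bipartite can first be deleted, since they contain no odd cycles and deleting them does not affect $D$. Each remaining component still contributes at most a constant to the right-hand side, and it must be absorbed.

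I would handle this by counting the faces of $G$ that are not inner faces of $G - D$ more carefully. Every such face meets $D$. Moreover, each of the $c$ components is attached to $D$ by at least two edges on its outer boundary, because it contains a cycle of length at least $5$ whose vertices all have neighbours in $D$. This gives $c$ extra edges in the count of Step 3, which cancels the $2c$ term.

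If that slack is not quite enough, the fallback is to sharpen Step 3 by using the lengths of the faces incident to $D$. Each face separating two components passes through $D$ twice and so has length at least $6$ after accounting for the domination edges. Together with Step 2, the conclusion $\oct(G-D) \le 5|D|/3$ would follow.
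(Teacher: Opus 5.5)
\textbf{There is a genuine gap: the proposal never bounds the number $c$ of non-bipartite components of $G-D$.}

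Your Steps 1--3 are correct, and they follow a genuinely different route from the paper. The paper charges each non-bipartite block of $G-D$ to ``jailed'' edges of $G[D]$ and then deletes a smallest colour class of a Gr\"otzsch $3$-colouring. Your Step 2 even has a one-line proof: in an outerplane graph a cycle contains every vertex of every face it encloses, and an odd cycle encloses an odd face, so one vertex per odd inner face is a transversal.

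Your chain ends at $3f_{\mathrm{odd}}\le 2m+2c-6$, so the lemma needs $c\le 3m/2+3$. Neither of your proposed fixes supplies this.
\begin{itemize}
\item There are no ``$c$ extra edges''. The two attachments per component are already among the $n'$ edges you counted. For example, take a $5$-cycle $x_1\cdots x_5$ with $d_1\sim x_1,x_3$, $d_2\sim x_2$, $d_3\sim x_4$, $d_4\sim x_5$, and let $G[D]$ be the path $d_2d_1d_3d_4$. This graph has exactly $n'$ edges to $D$ and exactly $m-1$ edges inside $D$. Its slack sits in the outside $5$-face $d_1d_3d_4x_5x_1$, which your count discards.
\item The fallback claim is false. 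If $x,y$ lie in different components and $d,d'\in D$ are both adjacent to $x$ and $y$, then $x\,d\,y\,d'$ can be a $4$-face that meets $D$ twice.
\end{itemize}

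\textbf{What is missing is a sufficiently good linear bound on $c$.} The easy fact is not enough. A non-bipartite component has at least three distinct neighbours in $D$, since two would give a $2$-colouring by $N(d_1)$ and $N(d_2)$. Contracting each component gives a planar bipartite minor, but with only three neighbours per component this yields just $c\le 2m-4$. That gives $f_{\mathrm{odd}}\le 2m-14/3$, which is too weak.

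You need four distinct neighbours. Take an odd cycle $Z$ of length at least $5$ in the component.
\begin{itemize}
\item Since $G[D]$ is connected, all of $D$ lies on one side of $Z$.
\item Label each vertex of $Z$ by one of its $D$-neighbours. This gives a cyclic word with no two consecutive letters equal (by triangle-freeness) and no subsequence $a\,b\,a\,b$ (by planarity).
\item Suppose such a word uses at most three letters and some letter $a$ occurs $r\ge 2$ times. Each other letter is confined to one of the $r$ nonempty gaps between occurrences of $a$. This forces $r\le 2$ and the word $a\,b\,a\,c$, so the length is at most $4<|Z|$.
\end{itemize}
The bipartite minor then has $c+m$ vertices and at least $4c$ edges. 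Hence $4c\le 2(c+m)-4$, i.e.\ $c\le m-2$, and your inequality gives $\oct(G-D)\le f_{\mathrm{odd}}\le (4m-10)/3$.

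With this addition your argument is complete. It yields a better constant than $5/3$ and does not need Gr\"otzsch's theorem.
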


\begin{proof}
    Let $C$ be a $2$-connected component of $G - D$. Since $D$ is a dominating set, every vertex in $C$ must have a neighbor in $D$. Moreover, since $D$ is connected, the vertices of $D$ dominating $C$ must belong to the same face $f$ of $G[D]$. So let $P$ be the smallest path in $G[D]$ along $f$ containing all the vertices from $D$ dominating $C$. Then we say that the edges of $P$ are \emph{jailed} by $C$ and $C$ jails at least $|C| - 1$ edges. By planarity of $G$, each edge of $G[D]$ can be jailed at most twice. Let $\mathcal{B}$ be the set of maximal $2$-connected components of $G - D$ of size at least $5$. We thus have
    \[4|\mathcal{B| \le }\sum_{C \in \mathcal{B}} |C| - 1 \le 2|\{e\in E(G[D]), e \textrm{ is jailed}\}| \le 2|E(G[D])|\]
    so, in particular, 
    \[\sum_{C \in \mathcal{B}} |C| = |\mathcal{B}| + \sum_{C \in \mathcal{B}} |C| - 1 \le  5|E(G[D])|/2\]
    Therefore, the number of vertices of $G - D$ that belong to a $2$-connected component of $G - D$ containing an odd cycle is at most $5|E(G[D])|/2$. By Theorem \ref{thm:grotzcsh_thm}, every triangle-free planar graph is $3$-colorable so, in particular,
    \begin{align*}
        \oct(G - D) &\le |\bigcup_{C \in \mathcal{B}}C|/3 \\
        &\le 5|E(G[D])|/6 \textrm{ by the previous observation}\\
        &\le 5|D|/3 \textrm{ since $G$ is triangle-free}
    \end{align*}
    as desired.
\end{proof}

Let $f_g(\Delta)$ denote the smallest integer $k$ such that every planar graph of girth at least $g$ and maximum degree $\Delta$ is $2$-distance $k$-colorable. As proven in \cite{amini2013unified} and in~\cite{havet2017listcolouringsquaresplanar}, $f_3(\Delta) \le 3(1 + o(1))\Delta/2$ and as proven in \cite{BONAMY2019218}, $f_5(\Delta) \le \Delta + 2$.

\begin{lemma}\label{lemma:connected_dominant_kind_of}
    Let $G$ be a graph and let $S \in V(G)$. There exists $S' \in V(G)$ and $S_1, \dots, S_k$ for some $k \le |S|$, a partition of $S'$ such that
    \begin{itemize}
        \item $S \subseteq S'$, and
        \item for every $i$, $S_i$ is the vertex set of a connected component of $G[S']$, and
        \item for every $i$, $j$, $\dist_G(S_i, S_j) \ge 4$, and
        \item $|S'| \le 3|S|$.
    \end{itemize} 
\end{lemma}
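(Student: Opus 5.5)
We build $S'$ by an iterative merging procedure, keeping the invariant that $S'$ is partitioned into $k$ parts, each inducing a connected subgraph, with $|S'| \le |S| + 2(|S| - k)$. We start with $S' = S$ and the partition into singletons $\{s\}$ for $s \in S$. Then $k = |S|$, and the first three items hold except possibly the distance condition. (Singletons are trivially connected. If two singletons are adjacent, the distance condition fails but they will be merged.) Strictly speaking, the parts of the partition need only be connected sets whose union is $S'$; the connected components of $G[S']$ are then unions of parts. We will ensure at the end that parts are at mutual distance at least $4$, which forces every part to be exactly a connected component.

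The merging step is as follows. While there exist two parts $S_i, S_j$ with $\dist_G(S_i,S_j) \le 3$, pick a shortest path between them. Its internal vertices, at most $2$ of them, are added to $S'$. We then replace $S_i$ and $S_j$ by the single part $S_i \cup S_j \cup \{\text{internal vertices}\}$, which induces a connected subgraph since the path connects the two connected sets. Some internal vertices may already lie in other parts. In that case we additionally merge those parts too. This only decreases $k$ further while adding no new vertices, so the invariant $|S'| \le |S| + 2(|S|-k)$ is preserved: each merge decreases the number of parts by at least one and adds at most $2$ vertices. Since $k$ strictly decreases, the process terminates after at most $|S|-1$ merges.

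At termination, all pairs of distinct parts are at distance at least $4$ in $G$. In particular no edge joins two parts, so each part, being connected, is exactly a connected component of $G[S']$. Also $S \subseteq S'$, $k \le |S|$, and $|S'| \le |S| + 2(|S|-1) \le 3|S|$. (If $S=\emptyset$ everything is trivial.) The only mildly delicate point is the bookkeeping when a shortest connecting path passes through a third part. Choosing a shortest path between the two closest parts avoids this altogether: an internal vertex lying in a third part would give that part distance less than the current minimum to $S_i$, contradicting minimality. So I would always merge a pair at minimum distance.
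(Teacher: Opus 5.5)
Your proof is correct and follows the same route as the paper. Both start from singletons and repeatedly merge two parts at distance at most $3$, together with the at most two internal vertices of a connecting path, so each merge lowers the number of parts by one, adds at most two vertices, and yields $|S'| \le 3|S|$. Your choice of a pair at minimum distance is a small improvement: it guarantees the internal vertices are new and the parts stay disjoint, a point the paper's proof passes over.
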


\begin{proof}
    Let $I = \{1, \dots k\}$ and let $\{S_i, i\in I\}$ be a partition of $S$ such that for every $i \in I$, $|S_i| = 1$. We will repeat the following operation until a fixed point is reached : if there exists $i$, $j$ such that $\dist_G(S_i, S_j) \le 3$, then let $P$ be a path in $G$ of length at most $3$ with one endpoint in $S_i$ and the other one in $S_j$. Then, we redefine $S_i$ to be $S_i \cup S_j \cup P$, we delete $S_j$ and we remove $j$ from $I$. At each step of the procedure, the number of set decreases by one and the size of the union of the remaining sets increases by at most $2$, therefore the procedure terminates and at the end of the procedure, $\sum_{i \in I} |S_i| \le 3|S|$ and we just have to set $S'$ to be $\bigcup_{i \in I}S_i$
\end{proof}

\begin{theorem}\label{thm:2_k-colo_planar_girth_4}
    Let $\mathcal{P}_4$ be the class of planar graphs of girth $4$. Then $\ex{2}{\mathcal{P}_4}{(n)} \le 8\sqrt{3}(1 + o(1))\sqrt{n}$.
\end{theorem}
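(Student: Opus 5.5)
We follow the strategy of Remark~\ref{rem:colo_strat}, with the high-degree vertices playing the role of $S$. Let $G$ be a triangle-free planar graph of order $n$, let $N$ be a threshold to be fixed later, and let $S$ be the set of vertices of degree at least $N$. Since $G$ is triangle-free planar, $|E(G)|\le 2n$, so $|S|\le 4n/N$.

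The argument then runs in four steps.

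\textbf{Step 1 (connecting $S$).} Apply Lemma~\ref{lemma:connected_dominant_kind_of} to $S$. This gives $S'\supseteq S$ with $|S'|\le 3|S|\le 12n/N$, partitioned into connected pieces $S_1,\dots,S_k$ that are pairwise at distance at least $4$. Since $\dist_G(S_i,S_j)\ge 4$, the closed neighbourhoods $N_G[S_i]$ are pairwise at distance at least $2$, so they can be treated independently, as long as each is coloured with its own palette.

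\textbf{Step 2 (colouring around each piece).} In $H_i=G[N_G[S_i]]$ the set $S_i$ is a connected dominating set, so Lemma~\ref{lemma:oct_bound} gives an odd cycle transversal $O_i$ of $H_i-S_i$ with $|O_i|\le 5|S_i|/3$. Give every vertex of $S_i\cup O_i$ its own distance-2 colour, and $2$-colour $H_i-S_i-O_i$ with the two distance-1 colours. Over all $i$ this costs at most $\tfrac{8}{3}|S'|\le 32n/N$ distance-2 colours. Each distance-2 colour is used exactly once, so the condition on $N_G(S')$ in Remark~\ref{rem:colo_strat} holds. Different pieces do not interact, since vertices coloured with distance-1 colours only need properness, and distinct $N_G[S_i]$ are non-adjacent.

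\textbf{Step 3 (the rest of the graph).} The graph $G-S'$ has maximum degree less than $N$, so $\chi_2(G-S')\le f_4(N)\le f_3(N)\le \tfrac32(1+o(1))N$.

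\textbf{Step 4 (balancing).} By Remark~\ref{rem:colo_strat}, the total number of distance-2 colours is at most $32n/N+\tfrac32(1+o(1))N$. Optimising at $N\approx\sqrt{64n/3}$ gives $2\sqrt{48n}(1+o(1))=8\sqrt3(1+o(1))\sqrt n$, which is the stated bound.

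The main obstacle is the compatibility step in the Remark. The $2$-distance colouring of $G-S'$ is computed without the edges through $S'$, yet two vertices of $N_G(S')\setminus S'$ that received distance-1 colours may also share a distance-2 colour from the colouring of $G-S'$. This is harmless, because in the final colouring each vertex of $N_G[S']$ keeps only its colour from the Step 2 colouring. The remaining check is that a vertex $u\notin N_G[S']$ with a distance-2 colour does not conflict with a vertex at distance $2$ from it through $S'$. Such a path has the form $u\,w\,s$ with $s\in S'$, and that forces $u\in N_G(N_G(S'))$. Here $u$ and $s$ have different colours, since the colours used on $S'$ are fresh. Any other path of length $2$ from $u$ avoids $S'$, so the $2$-distance colouring of $G-S'$ already handles it.

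I would verify this carefully. If it breaks down, I would fall back on giving fresh unique colours to all of $S'\cup\bigcup_i O_i$, which is what Step 2 already does. I would also double-check that Lemma~\ref{lemma:oct_bound} applies to each $H_i$ separately, namely that $H_i$ is triangle-free planar and $S_i$ is a connected dominating set of $H_i$; both hold by construction.
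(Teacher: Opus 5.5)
Your proof is correct and is essentially the paper's argument: the threshold set $S$ with $|S|\le 4n/N$, Lemma~\ref{lemma:connected_dominant_kind_of} giving pieces $S_i$ at pairwise distance at least $4$, Lemma~\ref{lemma:oct_bound} on each $G[N_G[S_i]]$ with fresh distance-2 colours on $S_i\cup O_i$ and a $2$-colouring of the rest, then Remark~\ref{rem:colo_strat} with $\chi_2(G-S')\le f_4(N-1)$ and the balance $32n/N+\tfrac32(1+o(1))N$. Your explicit compatibility check is sound, and it correctly applies the remark to $G-S'$ where the paper loosely writes $G-N_G[S]$.
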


\begin{proof}
    
    Let $S$ be the set of vertices of degree at least $N$ in $G$, $N$ to be determined later. Since $G$ is planar and is of girth at least $4$, $|S| \le 4n/N$. We start by constructing the sets $S'$ and $S_1, \dots, S_k$ from $S$ as in Lemma \ref{lemma:connected_dominant_kind_of}. By Lemma \ref{lemma:oct_bound}, for every $i \in \{1, \dots, k\}$, $\oct(G[N_G(S_i)]) \le 5|S_i|/3$ so $G[N_G[S_i]]$ is $\colo{2}{8|S_i|/3}$-colorable by coloring each vertex in $S_i$ with a unique distance-2 color, each vertex in a minimum odd cycle transversal of $N_G(S_i)$ with a unique distance-2 color as well, and $2$-coloring the remaining vertices. Moreover, for every $i$, $j \in \{1,\dots,k\}$, $\dist_G(N_G[S_i], N_G[S_j]) \ge 2$ so we can $\colo{2}{8|S_i|/3}$-color each $N_G[S_i]$ to obtain a valid $\colo{2}{8|S|}$-coloring of $G[N_G[S]]$ since $\sum_{i \in \{1, \dots, k\}} |S_i| \le 3|S|$. Therefore, by Remark \ref{rem:colo_strat}, and since $G - N_G[S]$ is of maximum degree at most $N - 1$, $G$ is $\colo{2}{f_4(N - 1) + 8|S|}$-colorable. We can upper-bound $f_4(N - 1) + 8|S|$ by $3(1 + o(1))N/2 + 32n/N$, and by taking $N = 8/\sqrt{3(1 + o(1))}\sqrt{n}$, we use $8\sqrt{3}(1 + o(1))\sqrt{n}$ distance-2 colors as desired.
\end{proof}

\begin{figure}
    \centering
    \includegraphics[width=0.8\linewidth]{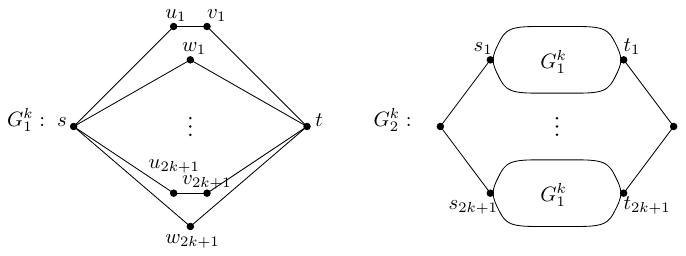}
    \caption{A planar graph of girth $4$ of order $12k^2 + 16k + 7$ not $\colo{2}{k}$-colorable.}
    \label{fig:planar_girth_4_sqrt_bound}
\end{figure}

\begin{figure}
    \centering
    \includegraphics[width=0.95\linewidth]{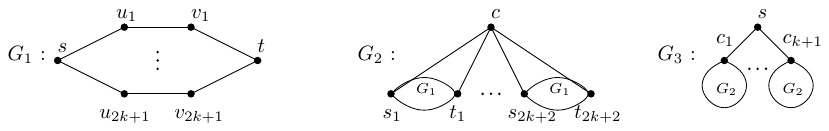}
    \caption{A planar graph of girth $5$ and order $8k^3 + 24k^2 + 25k + 10$ not $\colo{2}{k}$-colorable.}
    \label{fig:planar_girth_5_thirdsqrt_bound}
\end{figure}

The bound provided by Theorem \ref{thm:2_k-colo_planar_girth_4} is tight up to a constant factor since the graph $G_2^k$ from Figure \ref{fig:planar_girth_4_sqrt_bound} is of order $12k^2 + 16k + 7$ and is not $\colo{2}{k}$-colorable. Indeed, assume otherwise, and consider the graph $G_1^k$ from Figure \ref{fig:planar_girth_4_sqrt_bound}. If $s$ and $t$ are colored with a distance-1 color, then either they are colored with the same distance-1 color, but then since they both are of degree $4k + 2$, there must exist $i$ such that $u_i$ and $v_i$ are colored with a distance-1 color as well, in which case we must have a monochromatic edge. Otherwise, $s$ and $t$ are not colored with the same distance-1 color, but the same argument provides the existence of $i$ such that $w_i$ is colored with a distance-1 color, and again, we must have a monochromatic edge. So in any $\colo{2}{k}$-coloring of $G_1^k$, $s$ and $t$ cannot be both colored with a distance-1 color. However, in $G_2^k$, there must exist $i$ such that $s_i$ and $t_i$ are colored with a distance-1 color, which is a contradiction. This provides the lower-bound $\ex{2}{\mathcal{P}_4}{(n)} \ge \sqrt{3n-5}/6 - 1$. 

For the cases of planar graphs of larger girth, the same argument provides the bounds $\ex{2}{\mathcal{P}_g}{(n)} = O(\sqrt{n})$. Notice that Figure \ref{fig:planar_girth_5_thirdsqrt_bound} provides the lower-bound $\ex{2}{\mathcal{P}_5}{(n)} = \Omega(n^{1/3})$, the construction of Figure $2$ from \cite{choi2025propersquarecoloringssparse} provides the lower-bound $\ex{2}{\mathcal{P}_6}{(n)} = \Omega(n^{1/4})$, Theorem \ref{thm:choi_and_liu} states that $\ex{2}{\mathcal{P}_7}{(n)} \le 12$ and the construction of Figure $3$ from \cite{choi2025propersquarecoloringssparse} provides the lower-bound $\ex{2}{\mathcal{P}_7}{(n)} \ge 2$.

\subsubsection{Bounds on $\ex{3}{\mathcal{G}}{}$}

In this section, we will provide an upper-bound on the required number of distance-2 colors to color every planar graph of order $n$ when $3$ distance-1 colors are available.

\begin{remark}
    Let $G$ be a planar graph and let $v \in V$, $G[N(v)]$ is an outerplanar graph.
\end{remark}

\begin{theorem}\label{thm:bound_sqrt_planar}
    Let $\mathcal{P}$ be the class of planar graphs. Then $\ex{3}{\mathcal{P}}{(n)} \le 6\sqrt{3}(1 + o(1))\sqrt{n}$.
\end{theorem}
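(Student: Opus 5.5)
We follow the same template as the proof of Theorem~\ref{thm:2_k-colo_planar_girth_4}, combined with Remark~\ref{rem:colo_strat}. Let $G$ be a planar graph of order $n$. Let $S$ be the set of vertices of degree at least $N$, with $N$ to be optimized. Since $G$ has at most $3n-6$ edges, we have $|S|\le 6n/N$. The outside part is handled directly: $G-N_G[S]$ has maximum degree less than $N$, so it admits a $2$-distance coloring with $f_3(N)\le \tfrac32(1+o(1))N$ colors by~\cite{amini2013unified,havet2017listcolouringsquaresplanar}. It therefore suffices to $\colo{3}{O(|S|)}$-color $G[N_G[S]]$ so that every vertex of $N_G(S)$ carrying a distance-2 color receives a color used nowhere else.

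For the local coloring, first apply Lemma~\ref{lemma:connected_dominant_kind_of} to $S$. This produces $S'\supseteq S$ with $|S'|\le 3|S|$, partitioned into connected pieces $S_1,\dots,S_k$ that are pairwise at distance at least $4$. The closed neighbourhoods $N_G[S_i]$ are then pairwise at distance at least $2$, so the pieces can be colored independently, as in Theorem~\ref{thm:2_k-colo_planar_girth_4}. Within one piece:
\begin{itemize}
\item give every vertex of $S_i$ its own distance-2 color;
\item $3$-color $G[N_G(S_i)-S_i]$ after removing a set $T_i$ of size $O(|S_i|)$, and give each vertex of $T_i$ its own distance-2 color.
\end{itemize}
To find $T_i$ we copy the proof of Lemma~\ref{lemma:oct_bound}, with $3$-colorability replacing bipartiteness and outerplanarity replacing Grötzsch. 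Because $S_i$ is connected and dominates $H=G[N_G(S_i)-S_i]$, every $2$-connected block of $H$ is dominated by vertices lying on a single face of $G[S_i]$. By the jailing argument, only $O(|E(G[S_i])|)=O(|S_i|)$ vertices lie in blocks that are not already dominated by a single vertex of $S_i$.

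A block dominated by a single vertex $v$ lies in $G[N(v)]$, which is outerplanar by the Remark just before the theorem. Outerplanar graphs are $2$-degenerate and hence $3$-colorable, so such blocks cost nothing. The remaining vertices, $O(|S_i|)$ of them, are placed in $T_i$ directly. Since $3$-colorability of a graph follows from $3$-colorability of each of its blocks, $H-T_i$ is $3$-colorable. The $S_i$ themselves carry distance-2 colors, so no monochromatic edge can involve them.

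The step I expect to be the main obstacle is this bookkeeping: making the jailing argument work in the presence of triangles, and getting explicit constants. In the triangle-free case one has $|E(G[D])|\le 2|D|$, whereas here only $|E(G[S_i])|\le 3|S_i|$ is available. Moreover, in the jailing argument a block may be dominated by an interior vertex of the face path, and one must make sure each edge of $G[S_i]$ is still charged at most twice. My first attempt is to charge block sizes to edges of $G[S_i]$ exactly as in Lemma~\ref{lemma:oct_bound}. If that fails, the fallback is to put every vertex of non-single-dominated blocks into $T_i$. That loses only constants in $|T_i|=O(|S_i|)$, not the $\sqrt n$ order.

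Summing over the pieces gives a coloring of $G[N_G[S]]$ with $c|S|=6cn/N$ distance-2 colors, each used once, for some absolute constant $c$. Combining this with the outside coloring through Remark~\ref{rem:colo_strat} gives at most $\tfrac32(1+o(1))N+6cn/N$ colors, which is $O(\sqrt n)$ after choosing $N=\Theta(\sqrt n)$. To recover the exact constant $6\sqrt3$ one needs $c=3$, i.e.\ $|S'\cup T|\le 3|S|$, which requires the sharper counting above: taking $N=\sqrt{12n}$ gives $\tfrac32\sqrt{12n}+18n/\sqrt{12n}=6\sqrt3\sqrt n$. The proof only uses degeneracy, Euler's formula and the $2$-distance coloring bound, never the Four Color Theorem.
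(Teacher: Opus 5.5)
There is a genuine gap at the step you flagged yourself. You claim that only $O(|E(G[S_i])|)$ vertices of $H=G[N_G(S_i)-S_i]$ lie in blocks not dominated by a single vertex of $S_i$. This is false once triangles are allowed, and your fallback rests on the same count.

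Here is a counterexample. Take a cycle $x_1x_2\cdots x_m$ and put a vertex $u$ inside it, adjacent to $x_1,\dots,x_{m-1}$. Put a vertex $w$ in the face $ux_{m-1}x_mx_1$, adjacent to $u$ and $x_m$, and hang $N$ pendant vertices on $w$. This graph is planar, and for $m\ge N$ one gets $S=S'=S_1=\{u,w\}$, with $G[S_1]$ a single edge. The cycle is a block of $H$ on $m$ vertices, dominated by $\{u,w\}$ but by neither vertex alone. Your scheme therefore puts all $m$ of its vertices into $T_1$. Since $m$ can be of order $n$, the final count is linear rather than $O(\sqrt n)$. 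The charging ``$C$ jails at least $|C|-1$ edges'' behind Lemma~\ref{lemma:oct_bound} does not transfer: here a block of size $m$ jails a single edge.

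The missing idea is that no set $T_i$ is needed at all. Because $S_i$ is connected, contracting it to a single vertex $s_i$ keeps the graph planar. The edges inside $N_G(S_i)$ are untouched, and $N_G(S_i)$ becomes exactly $N(s_i)$. By the Remark preceding the theorem, $G[N_G(S_i)]$ is therefore outerplanar, hence $2$-degenerate and $3$-colorable. (In the example above, $H$ is just a cycle plus isolated vertices.)

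Since the sets $N_G[S_i]$ are pairwise at distance at least $2$, $G[N_G(S')]$ is a disjoint union of these outerplanar graphs. You can then color $N_G(S')$ with the three distance-1 colors and give each vertex of $S'$ a private distance-2 color. This uses $|S'|\le 3|S|\le 18n/N$ colors, which is exactly the $c=3$ you said you needed. Combined with Remark~\ref{rem:colo_strat} and your choice $N=\sqrt{12n}$, it yields $6\sqrt3(1+o(1))\sqrt n$. This is the paper's proof, and the rest of your framework (choice of $S$, the connecting lemma, the optimisation) coincides with it.

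One further correction: the outside coloring must be a $2$-distance coloring of $G-S'$, as in Remark~\ref{rem:colo_strat}, not of $G-N_G[S]$. Two vertices outside $N_G[S']$ can have a common neighbour in $N_G(S')$. Since $G-S'$ still has maximum degree less than $N$, this costs nothing.
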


\begin{proof}
    Let $S$ be the set of vertices of $G$ of degree at least $N$, $N$ to be determined later. We start by constructing the sets $S'$ and $S_1, \dots, S_k$ from $S$ as in Lemma \ref{lemma:connected_dominant_kind_of}. Since for every $i$, $j$, $\dist_G(S_i, S_j) \ge 4$, then for every $i$, $j$, $\dist_G(N_G[S_i], N_G[S_j]) \ge 2$ so $G[N_G(S')]$ is an outerplanar graph and therefore is $3$-colorable. Since we can $2$-distance $f_3(N - 1)$-color $G - S$ and $\colo{3}{|S'|}$-color $G[N_G[S']]$ such that the vertices in $N_G(S')$ are only colored with distance-1 colors, the by Remark \ref{rem:colo_strat}, $G$ is $\colo{3}{f_3(N - 1) + |S'|}$-colorable. By Lemma \ref{lemma:connected_dominant_kind_of}, $|S'| \le 3|S|$ and since $G$ is planar, $|S| \le 6n/N$. Therefore, we can upper-bound $f_3(N - 1) + |S'|$ by $3(1 + o(1))N/2 + 18n/N$, and by taking $N = 6\sqrt{n}/\sqrt{3(1 + o(1))}$, we manage to use at most $6\sqrt{3}(1 + o(1))\sqrt{n}$ distance-2 colors as desired.
\end{proof}

The bound on $\ex{3}{\mathcal{P}}{(n)} \leq 6\sqrt{3}(1 + o(1))\sqrt{n}$ provided by Theorem~\ref{thm:bound_sqrt_planar} is tight up to a constant factor since the graph from Figure~\ref{fig:k-degenerate graph not 2k-colorable} with $k = 3$ provides the lower bound $\ex{3}{\mathcal{P}}{(n)}\geq (\sqrt{12n - 11} - 7)/6$.

\section{Further remarks and open problems}

\paragraph{Gap statements.} In Theorem \ref{thm:npc_gap_3_1}, we proved that distinguishing the $\colo{3}{1}$-colorable graphs and the $3$-colorable graphs is NP-Complete, even when restricted to planar graphs. We can prove a similar statement with an arbitrarily large gap, but for general graphs.

\begin{proposition}\label{prop:gap_c_0_c_k}
    For every integer $c \geq 3$, $k \ge 1$,  and given a graph $G$ that is either $c$-colorable or not $\colo{c}{k}$-colorable, it is NP-Complete to decide whether $G$ is $c$-colorable.
\end{proposition}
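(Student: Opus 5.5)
Our plan is to reduce from {\sc $c$-coloring} on general graphs, which is NP-Complete for every fixed $c \geq 3$ (Karp), by blowing up each vertex into a clique-like gadget. The construction should force every $\colo{c}{k}$-coloring to behave like a genuine $c$-coloring, while a $c$-coloring of the original graph still extends to a proper $c$-coloring of the new graph. Concretely, given $G$, we will build $G'$ by replacing each vertex $v$ with $M = k+1$ copies $v^1,\dots,v^{M}$ that form an independent set. For every edge $uv \in E(G)$ we add the complete bipartite graph between $\{u^1,\dots,u^M\}$ and $\{v^1,\dots,v^M\}$. This is the lexicographic product $G[\overline{K_M}]$, which is $c$-colorable whenever $G$ is: we give every copy of $v$ the color of $v$. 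The first direction is therefore immediate.

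For the converse, we take a $\colo{c}{k}$-coloring of $G'$. If a copy $v^i$ with $\deg_G(v)\ge 1$ receives a distance-2 color, that color is unique within $N_{G'}[N_{G'}(v^i)]$. The issue is that only $k$ distance-2 colors exist, yet we want every vertex of $G$ to have some copy with a distance-1 color. Taking $M = k+1$ copies, all pairwise at distance $2$ through a common neighbor (true if $v$ is non-isolated, and isolated vertices can be ignored), at most $k$ copies of $v$ carry distance-2 colors. So some copy $v^{i(v)}$ carries a distance-1 color. Any two such chosen copies $u^{i(u)}, v^{i(v)}$ with $uv\in E(G)$ are adjacent in $G'$, so their distance-1 colors differ. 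Assigning to $v$ the distance-1 color of $v^{i(v)}$ then gives a proper $c$-coloring of $G$.

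It remains to note that the construction is polynomial, since $|V(G')| = (k+1)|V(G)|$ with $k$ fixed. Hence distinguishing $c$-colorable graphs from non-$\colo{c}{k}$-colorable graphs among graphs promised to be one or the other is NP-hard, and membership in NP is clear. The main point to check carefully is that copies of the same vertex really are pairwise at distance exactly $2$, so that they cannot share a distance-2 color. Since the copies of $v$ have identical neighborhoods, this holds as soon as $v$ has a neighbor. For isolated vertices we simply delete them beforehand. The construction destroys planarity, which explains why the statement is only for general graphs.
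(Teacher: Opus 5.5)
Your proof is correct and follows the paper's argument. You use the same blow-up, replacing each edge by $K_{k+1,k+1}$ (i.e.\ $G[\overline{K_{k+1}}]$), and the same pigeonhole fact that the $k+1$ pairwise distance-2 copies of a vertex cannot all get distinct distance-2 colors. The only difference is cosmetic: you fix one distance-1 copy per vertex and read off the $c$-coloring directly, while the paper recolors all copies of a vertex to match such a copy, edge by edge, which amounts to the same thing.
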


\begin{proof}
    Let $G'$ be the graph constructed from $G$ such that for every vertex $v \in V(G)$, we add $k + 1$ copies $v_1, \dots, v_{k + 1}$ of $v$ to $G'$ and for every edge $uv \in E(G)$  we add the edge $u_iv_j$ to $G'$ for every $i$, $j \in \{1, \dots, k + 1\}$. That is, we replace every edge of $G$ by a copy of $K_{k + 1, k + 1}$.
    \begin{itemize}
        \item if $G$ is $c$-colorable, then so is $G'$ since, for every proper $c$-coloring of $G$, we can color each $v_i$ with the color of $v$ and this is a proper $c$-coloring of $G'$.
        \item if $G'$ is $\colo{c}{k}$-colorable, then let $uv \in E(G)$. There must exist $i$, $j \in \{1, k + 1\}$ such that $u_i$ and $v_j$ are colored with distance-1 colors. Since $u_iv_j  \in E(G')$, those distance-1 colors are distinct. We can therefore recolor every copy of $u$ with the color of $u_i$ and every copy of $v$ with the color of $v_j$. By repeating this operation, we can transform the $\colo{c}{k}$-coloring of $G'$ into a proper $c$-coloring of $G'$ where each copies of $v$ shares the same color, and we can easily retrieve a proper $c$-coloring of $G$ from this proper $c$-coloring of $G'$. 
    \end{itemize}
    So $G$ is $c$-colorable if and only if $G'$ is $\colo{c}{k}$-colorable if and only if $G'$ is $c$-colorable. The result follows from the NP-Completeness of {\sc$c$-coloring} when $c \geq 3$. 
\end{proof}

\begin{question}
    Does a similar result restricted to planar graphs exist ?
\end{question}

\paragraph{Extremal values.}

As observed after Theorem \ref{thm:2_k-colo_planar_girth_4}, Figure \ref{fig:planar_girth_5_thirdsqrt_bound} provides the lower-bound $\ex{2}{\mathcal{P}_5}{(n)} = \Omega(n^{1/3})$. Moreover, the construction of Figure $2$ from \cite{choi2025propersquarecoloringssparse} provides the lower-bound $\ex{2}{\mathcal{P}_6}{(n)} = \Omega(n^{1/4})$.

\begin{question}
    Are these bounds tight ?
\end{question}

In most of our bounds, we prove that a graph $G$ of order $n$ is $\colo{\chi(G) - 1}{O(\sqrt{n})}$-colorable. This is false in general since, for instance, the graph from Figure \ref{fig:bound_bipartitr_planar_linear_1_k} is $2$-colorable but not $\colo{1}{o(n)}$-colorable. Another counter-example is a graph $G$ that is a complete $k$-partite graph with parts of size $n$. $G$ is $k$-colorable but is not $\colo{k - 1}{n}$-colorable. 

\begin{question}
    When is a graph $G$ of order $n$ $\colo{\chi(G) - 1}{O(\sqrt{n})}$-colorable ?
\end{question}


\section*{Acknowledgment}
I am sincerely grateful to Pascal Ochem for encouraging me to work on this topic and for his guidance and helpful suggestions throughout this work.

\bibliography{bibi}

@article{choi2025propersquarecoloringssparse,
  title={Between proper and square colorings of sparse graphs},
  author={Ilkyoo Choi and Xujun Liu},
  journal={\arxiv{2509.03080}},
  year={2025}
}

@article{DJPSY,
title = {The complexity of multiterminal cuts},
journal = {SIAM J. Comput.},
volume = {23},
number = {4},
pages = {864--894},
year = {1994},
issn = {},
doi = {},
url = {},
author = {Dahlhaus, Elias and Johnson, David S. and Papadimitriou, Christos H. and Seymour, Paul D. and Yannakakis, Mihalis}
}

@article{GAREY1976237,
title = {Some simplified NP-complete graph problems},
journal = {Theoretical Computer Science},
volume = {1},
number = {3},
pages = {237-267},
year = {1976},
issn = {0304-3975},
doi = {https://doi.org/10.1016/0304-3975(76)90059-1},
url = {https://www.sciencedirect.com/science/article/pii/0304397576900591},
author = {M.R. Garey and D.S. Johnson and L. Stockmeyer}
}

@ARTICLE{montassier2013near,
  title     = "Near-colorings: Non-colorable graphs and {NP-completeness}",
  author    = "Montassier, M and Ochem, P",
  journal   = "Electron. J. Comb.",
  publisher = "The Electronic Journal of Combinatorics",
  volume    =  22,
  number    =  1,
  month     =  mar,
  year      =  2015
}

@article{FEDER2021103210,
title = {Distance-two colourings of Barnette graphs},
journal = {European Journal of Combinatorics},
volume = {91},
pages = {103210},
year = {2021},
note = {Colorings and structural graph theory in context (a tribute to Xuding Zhu)},
issn = {0195-6698},
doi = {https://doi.org/10.1016/j.ejc.2020.103210},
url = {https://www.sciencedirect.com/science/article/pii/S0195669820301311},
author = {Tomás Feder and Pavol Hell and Carlos Subi}
}

@article{appel1976every,
  title={Every planar map is four colorable},
  author={Appel, Kenneth and Haken, Wolfgang},
  year={1976}
}

@article{appel1977every,
  title={Every planar map is four colorable. Part II: Reducibility},
  author={Appel, Kenneth and Haken, Wolfgang and Koch, John},
  journal={Illinois Journal of Mathematics},
  volume={21},
  number={3},
  pages={491--567},
  year={1977},
  publisher={Duke University Press}
}

@Inbook{Karp1972,
author="Karp, Richard M.",
editor="Miller, Raymond E.
and Thatcher, James W.
and Bohlinger, Jean D.",
title="Reducibility among Combinatorial Problems",
bookTitle="Complexity of Computer Computations: Proceedings of a symposium on the Complexity of Computer Computations, held March 20--22, 1972, at the IBM Thomas J. Watson Research Center, Yorktown Heights, New York, and sponsored by the Office of Naval Research, Mathematics Program, IBM World Trade Corporation, and the IBM Research Mathematical Sciences Department",
year="1972",
publisher="Springer US",
address="Boston, MA",
pages="85--103",
isbn="978-1-4684-2001-2",
doi="10.1007/978-1-4684-2001-2_9",
url="https://doi.org/10.1007/978-1-4684-2001-2_9"
}

@inproceedings{sharp2007distance,
  title={Distance coloring},
  author={Sharp, Alexa},
  booktitle={European Symposium on Algorithms},
  pages={510--521},
  year={2007},
  organization={Springer}
}

@article{wegner1977graphs,
  title={Graphs with given diameter and a coloring problem},
  author={Wegner, Gerd},
  year={1977}
}

@article{amini2013unified,
  title={A unified approach to distance-two colouring of graphs on surfaces},
  author={Amini, Omid and Esperet, Louis and Van Den Heuvel, Jan},
  journal={Combinatorica},
  volume={33},
  number={3},
  pages={253--296},
  year={2013},
  publisher={Springer}
}

@article{havet2017listcolouringsquaresplanar,
title = {List Colouring Squares of Planar Graphs},
journal = {Electronic Notes in Discrete Mathematics},
volume = {29},
pages = {515-519},
year = {2007},
note = {European Conference on Combinatorics, Graph Theory and Applications},
issn = {1571-0653},
doi = {https://doi.org/10.1016/j.endm.2007.07.079},
url = {https://www.sciencedirect.com/science/article/pii/S157106530700162X},
author = {Frédéric Havet and Jan {van den Heuvel} and Colin McDiarmid and Bruce Reed}
}

@article{bonamy20142,
  title={2-Distance Coloring of Sparse Graphs},
  author={Bonamy, Marthe and L{\'e}v{\^e}que, Benjamin and Pinlou, Alexandre},
  journal={Journal of Graph Theory},
  volume={77},
  number={3},
  pages={190--218},
  year={2014},
  publisher={Wiley Online Library}
}

@article{la20252,
  title={2-distance 4-coloring of planar subcubic graphs with girth at least 21},
  author={La, Hoang and Montassier, Mickael},
  journal={Discrete Mathematics \& Theoretical Computer Science},
  volume={26},
  number={Graph Theory},
  year={2025},
  publisher={Episciences. org}
}

@article{Wang_and_Lih,
author = {Wang, Wei-Fan and Lih, Ko-Wei},
title = {Labeling Planar Graphs with Conditions on Girth and Distance Two},
journal = {SIAM Journal on Discrete Mathematics},
volume = {17},
number = {2},
pages = {264-275},
year = {2003},
doi = {10.1137/S0895480101390448},
URL = {https://doi.org/10.1137/S0895480101390448}
}

@article{Borodin2004,
author = {Borodin, O.V. and Glebov, Aleksey and Ivanova, A.O. and Neustroeva, T.K. and Tashkinov, V.A.},
year = {2004},
month = {01},
pages = {},
title = {Sufficient conditions for planar graphs to be 2-distance {$(\Delta + 1)$}-colorable},
volume = {1},
journal = {Sibirskie Èlektronnye Matematicheskie Izvestiya [electronic only]}
}

@article{Borodin2004_2,
author = {Borodin, O.V. and Ivanova, A.O. and Neustroeva, T.K.},
journal = {Sibirskie Ehlektronnye Matematicheskie Izvestiya [electronic only]},
language = {eng},
pages = {76-90},
publisher = {Institut Matematiki Im. S.L. Soboleva, SO RAN},
title = {2-distance coloring of sparse planar graphs.},
url = {http://eudml.org/doc/51860},
volume = {1},
year = {2004},
}

@article{DVORAK2008838,
title = {Coloring squares of planar graphs with girth six},
journal = {European Journal of Combinatorics},
volume = {29},
number = {4},
pages = {838-849},
year = {2008},
note = {Homomorphisms: Structure and Highlights},
issn = {0195-6698},
doi = {https://doi.org/10.1016/j.ejc.2007.11.005},
url = {https://www.sciencedirect.com/science/article/pii/S0195669807002028},
author = {Zdeněk Dvořák and Daniel Král’ and Pavel Nejedlý and Riste Škrekovski},
}

@article{DVORAK20092634,
title = {Distance constrained labelings of planar graphs with no short cycles},
journal = {Discrete Applied Mathematics},
volume = {157},
number = {12},
pages = {2634-2645},
year = {2009},
note = {Second Workshop on Graph Classes, Optimization, and Width Parameters},
issn = {0166-218X},
doi = {https://doi.org/10.1016/j.dam.2008.08.013},
url = {https://www.sciencedirect.com/science/article/pii/S0166218X08003466},
author = {Zdeněk Dvořák and Daniel Král’ and Pavel Nejedlý and Riste Škrekovski},
}

@article{BONAMY2019218,
title = {Planar graphs of girth at least five are square {$(\Delta+2)$}-choosable},
journal = {Journal of Combinatorial Theory, Series B},
volume = {134},
pages = {218-238},
year = {2019},
issn = {0095-8956},
doi = {https://doi.org/10.1016/j.jctb.2018.06.005},
url = {https://www.sciencedirect.com/science/article/pii/S0095895618300522},
author = {Marthe Bonamy and Daniel W. Cranston and Luke Postle}
}

@article{ROBERTSON19972,
title = {The Four-Colour Theorem},
journal = {Journal of Combinatorial Theory, Series B},
volume = {70},
number = {1},
pages = {2-44},
year = {1997},
issn = {0095-8956},
doi = {https://doi.org/10.1006/jctb.1997.1750},
url = {https://www.sciencedirect.com/science/article/pii/S0095895697917500},
author = {Neil Robertson and Daniel Sanders and Paul Seymour and Robin Thomas},
}

@article{inoue20255,
  title={5-Coloring Planar Graphs with a Color Class of Order at Most $|V|/6$},
  author={Inoue, Yuta and Kawarabayashi, Ken-ichi and Miyashita, Atsuyuki},
  journal={\arxiv{2510.15407}},
  year={2025}
}

@ARTICLE{cranston2016planargraphsindependenceratio,
  title     = "Planar Graphs have Independence Ratio at least 3/13",
  author    = "Cranston, Daniel W and Rabern, Landon",
  journal   = "Electron. J. Comb.",
  publisher = "The Electronic Journal of Combinatorics",
  volume    =  23,
  number    =  3,
  month     =  sep,
  year      =  2016
}

@article{grunbaum1963grotzsch,
  title={Gr{\"o}tzsch's theorem on $3 $-colorings.},
  author={Gr{\"u}nbaum, Branko},
  journal={Michigan Mathematical Journal},
  volume={10},
  number={3},
  pages={303--310},
  year={1963},
  publisher={University of Michigan, Department of Mathematics}
}

@article{thomassen2003short,
  title={A short list color proof of Gr{\"o}tzsch's theorem},
  author={Thomassen, Carsten},
  journal={Journal of Combinatorial Theory, Series B},
  volume={88},
  number={1},
  pages={189--192},
  year={2003},
  publisher={Elsevier}
}

@article{kramer1969probleme,
  title={Un probleme de coloration des sommets d’un graphe},
  author={Kramer, Florica and Kramer, Horst},
  journal={CR Acad. Sci. Paris A},
  volume={268},
  number={7},
  pages={46--48},
  year={1969}
}

@article{krumke2001models,
  title={Models and approximation algorithms for channel assignment in radio networks},
  author={Krumke, Sven O and Marathe, Madhav V and Ravi, SS},
  journal={Wireless networks},
  volume={7},
  number={6},
  pages={575--584},
  year={2001},
  publisher={Springer}
}

@article{cranston2022coloring,
  title     = "Coloring, list coloring, and painting squares of graphs (and
               other related problems)",
  author    = "Cranston, Daniel",
  journal   = "Electron. J. Comb.",
  publisher = "The Electronic Journal of Combinatorics",
  volume    =  1000,
  number    = "DS25",
  month     =  apr,
  year      =  2023
}

@article{pinlou2006oriented,
  title={Oriented vertex and arc colorings of outerplanar graphs},
  author={Pinlou, Alexandre and Sopena, {\'E}ric},
  journal={Information Processing Letters},
  volume={100},
  number={3},
  pages={97--104},
  year={2006},
  publisher={Elsevier}
}
\bibliographystyle{plain}

\end{document}